\newcommand{\xiaowuhao}{\fontsize{9pt}{\baselineskip}\selectfont}
\newtheorem{thm}{Theorem}[section]
\newtheorem{cor}[thm]{Corollary}
\newtheorem{lem}[thm]{Lemma}
\newtheorem{prop}[thm]{Proposition}
\theoremstyle{definition}
\newtheorem{defn}[thm]{Definition}
\newtheorem{exam}[thm]{Example}
\newcommand{\lra}{\longrightarrow}
\newcommand{\ra}{\rightarrow}
\newcommand{\D}{\mathcal D}
\newcommand{\K}{\mathcal K}
\renewcommand{\S}{\mathcal S}
\newcommand{\R}{\mathcal R}
\newcommand{\T}{\mathcal T}
\newcommand{\U}{\mathcal U}
\newcommand{\V}{\mathcal V}
\newcommand{\W}{\mathcal W}
\newcommand{\X}{\mathcal X}
\newcommand{\Y}{\mathcal Y}
\DeclareMathOperator{\Hom}{\mathsf{Hom}}
\DeclareMathOperator*{\Mod}{\!-\mathsf{Mod}}
\DeclareMathOperator*{\per}{\mathsf{per}}
\DeclareMathOperator*{\proj}{\!-\mathsf{proj}}
\DeclareMathOperator*{\Proj}{\!-\mathsf{Proj}}
\DeclareMathOperator*{\smod}{\!-\mathsf{mod}}
\DeclareMathOperator*{\coh}{\!-\mathsf{coh}}
\DeclareMathOperator*{\qcoh}{\!-\mathsf{qcoh}}
\title[Localization theorems]{Localization theorems for weakly approximable triangulated categories}
\author[Sun-Zhang-Zhang]{Yongliang Sun, Jinbi Zhang and Yaohua Zhang*}
\address{\normalfont{Yongliang Sun \\School of Mathematics and Physics, Yancheng Institute of Technology, 224003 Jiangsu, People's Republic of China}}
\email{syl13536@126.com}
\address{\normalfont{Jinbi Zhang\\
School of Mathematical Sciences, Anhui University, 230601 Hefei, People's Republic of China}}
\email{zhangjb@ahu.edu.cn}
\address{\normalfont{Yaohua Zhang \\ Hubei Key Laboratory of Applied Mathematics, Faculty of Mathematics and Statistics, Hubei University, 430062 Wuhan, People's Republic of China}}
\email{yhzhang@hubu.edu.cn}
\thanks{* Corresponding author.}
\keywords{recollement, localization theorem, short exact sequence of triangulated categories}
\subjclass[2020]{18G80, 16E35}
\begin{document}
\begin{abstract}
Weakly approximable triangulated categories, introduced by Neeman, provide a powerful framework for studying localization phenomena in triangulated categories. 
In this paper, we establish new localization theorems showing that, under mild assumptions, a recollement of weakly approximable triangulated categories induces short exact sequences on several natural triangulated subcategories as well as on the associated (big) singularity categories. 
As applications, we illustrate our results in the derived categories of rings, DG algebras, and schemes.
\end{abstract}
\maketitle
\section{Introduction}
Localization and gluing phenomena play a central role in the study of triangulated categories. 
One of the most powerful categorical frameworks capturing such phenomena is provided by recollements of triangulated categories, introduced by Beilinson, Bernstein, and Deligne in \cite{BBD}. 
Recollements have since become an important tool in many areas of mathematics, including algebraic geometry, representation theory, and homological algebra. 

At its heart, a recollement of triangulated categories is a diagram $$\quad\xymatrix{\R\ar^-{i_*=i_!}[r]
&\T\ar^-{j^!=j^*}[r]\ar^-{i^!}@/^1.2pc/[l]\ar_-{i^*}@/_1.6pc/[l]
&\S\ar^-{j_*}@/^1.2pc/[l]\ar_-{j_!}@/_1.6pc/[l]}$$
that involves three interconnected short exact sequences (see Section \ref{sect-pre-rec}) satisfying certain criteria.
Such structures can be viewed as categorical analogues of short exact sequences and play a fundamental role in localization theory.

A fundamental question in the study of recollements is how they interact with natural subcategories of interest. 
A landmark result in this direction is Thomason’s localization theorem \cite{TT90}, later developed and generalized by Neeman \cite{N92} in the setting of compactly generated triangulated categories. 
Neeman proved that if the above diagram is a recollement of compactly generated triangulated categories, then the first row restricts to a short exact sequence up to direct summands on the subcategories of compact objects
$$
\S^c\stackrel{j_!}{\longrightarrow}\T^c\stackrel{i^*}{\longrightarrow}\R^c.
$$
Recently, Jin, Yang, and Zhou \cite{JYZ23} established a refinement of this result in the context of algebraically compactly generated triangulated categories.
They showed that if the first row restricts to a short exact sequence
$$\V\stackrel{j_!}{\longrightarrow}\W\stackrel{i^*}{\longrightarrow}\U,$$
where $\V, \W$ and $\U$ are triangulated subcategories of $\S, \T$ and $\R$, respectively, containing the compact objects, then this recollement induces a short exact sequence of Verdier quotient categories
$$\V/\S^c\stackrel{j_!}{\longrightarrow}\W/\T^c\stackrel{i^*}{\longrightarrow}\U/\R^c.$$
However, the methods in \cite{JYZ23} rely heavily on algebraic structures and therefore do not apply to more general triangulated categories arising in topology.

Our work advances this line of investigation by employing the theory of weakly approximable triangulated categories developed by Neeman \cite{N18a, N21b}. 
This flexible framework encompasses many classical examples, including derived categories of rings and schemes as well as homotopy categories of spectra.
This framework has led to proofs of several important conjectures and new proofs of fundamental theorems, including Bondal–Van den Bergh's conjecture \cite{N21a}, a generalization of a theorem of Rouquier \cite{N21a}, Rickard’s theorem on derived equivalences \cite{CHNS, N18c, R88}, Serre’s GAGA theorem \cite{N18a}, and results on the existence of bounded $t$-structures \cite{BCPRZ24, N22a}.

In this paper, we establish new localization results showing that recollements induce short exact sequences on several important subcategories, as well as on the (big) singularity categories.
To state our results precisely, we briefly introduce the relevant terminology.
Let $\T$ be a pre-approximable triangulated category (see Definition~\ref{def:appro}) with a compact generator $G$. The object $G$ generates a $t$-structure $(\T^{\leq 0}, \T^{\geq 0})$ of $\T$. Associated with this $t$-structure are several thick subcategories of $\T$, namely $\T^-, \T^+, \T^b,\T^{\rm sb}, \T^-_c$, $\T^b_c$, together with the singularity category $\T_{sg}$ and the big singularity category $\T^{\rm big}_{sg}$ (see Subsection~\ref{subsec:subcat}). These categories are independent of the choice of the generator. In the case of derived categories of finite-dimensional algebras, they correspond respectively to $\D^-(A)$, $\D^+(A)$, $\D^b(A)$, $\K^b(A\Proj)$, $\D^-(A\smod)$, $\D^b(A\smod)$, $\D_{sg}(A)$ and $\D_{sg}^{\rm big}(A)$. 

\begin{thm}\textnormal{(Theorem~\ref{thm:exact sequences of quotient})}\label{thm:A}
Let $\R, \S$, and $\T$ be triangulated categories that form a recollement of triangulated categories
$$\quad\xymatrix{\R\ar^-{i_*=i_!}[r]
&\T\ar^-{j^!=j^*}[r]\ar^-{i^!}@/^1.2pc/[l]\ar_-{i^*}@/_1.6pc/[l]
&\S.\ar^-{j_*}@/^1.2pc/[l]\ar_-{j_!}@/_1.6pc/[l]}$$
Then the following statements hold.
\begin{enumerate}
 \item Suppose that $\R, \S$ and $\T$ are pre-approximable. Let $\U,\V$ and $\W$ be triangulated subcategories of $\R, \S$ and $\T$, respectively, such that ${\R}^c\subseteq\U$, ${\S}^c\subseteq\V\subseteq {\S}^-_c$ and $\T^c\subseteq \W$. If the first row of the recollement is restricted to a short exact sequence
$$\V\stackrel{j_!}{\longrightarrow}\W\stackrel{i^*}{\longrightarrow}\U,$$
then it induces a short exact sequence
$$\V/{\S}^c\stackrel{\overline{j_!}}{\longrightarrow}\W/\T^c\stackrel{\overline{i^*}}{\longrightarrow}\U/{\R}^c.$$
\item Suppose that $\R,\T, \S$ are weakly approximable and that $j^*(G_{\mathcal{T}})\in \mathcal{S}_c^-$, where $G_{\mathcal{T}}$ denotes a compact generator of $\T$.
Then the first row of the recollement induces a short exact sequence $${\S}^-_c/{\S}^c\stackrel{\overline{j_!}}{\longrightarrow}{\T}^-_c/\T^c\stackrel{\overline{i^*}}{\longrightarrow}{\R}^-_c/{\R}^c.$$
In particular, this holds if $\R,\T, \S$ are locally Hom-finite approximable $R$-linear triangulated categories, with $R$ a commutative noetherian ring.  
\end{enumerate}
\end{thm}

A pre-approximable triangulated category is a triangulated category that admits a compact generator $G$ satisfying $\Hom_{\T}(G, G[i])=0$ for sufficiently large $i$. There are many examples of such categories, including the derived categories of upper-bounded DG algebras. Comparing Theorem \ref{thm:A}(1) with Jin-Yang-Zhou's theorem, we see that while their result excels when all three categories involved are algebraic, it falls short in the context of topological triangulated categories (e.g., the homotopy category of spectra), which presents an advantage for our framework. 

According to \cite{N18c}, under mild conditions, the subcategories $\T^c$ and $\T^b_c$ are mutually determining. This insight prompts us to consider the existence of counterparts for $\T^b_c$ pertaining to Neeman's localization theorem for $\T^c$. The following theorem materializes this observation.

\begin{thm}\textnormal{(Theorem~\ref{thm:exact seq Tbc})}\label{thm:B}
Let  $\R, \S$ and $\T$ be triangulated categories and admit a recollement of triangulated categories
$$\xymatrix{\R\ar^-{i_*=i_!}[r]
&\T\ar^-{j^!=j^*}[r]\ar^-{i^!}@/^1.2pc/[l]\ar_-{i^*}@/_1.6pc/[l]
&\S.\ar^-{j_*}@/^1.2pc/[l]\ar_-{j_!}@/_1.6pc/[l]}$$
Suppose that  $\R, \S$ and $\T$ are weakly approximable and coherent, and that $j^*(G_{\T})\in \S_c^-$ , where  $G_{\T}$ denotes a compact generator of $\T$.  
\begin{enumerate}
\item The second row in the recollement is restricted to a short exact sequence up to direct summands
$$\R^b_c\stackrel{i_*}\longrightarrow \T_c^b\stackrel{j^*}\longrightarrow \S^b_c.$$
\item Let $\U,\V$ and $\W$ be triangulated subcategories of $\R, \S$ and $\T$, respectively, such that ${\R}^b_c\subseteq\U\subseteq {\R}^-_c$, ${\S}^b_c\subseteq\V$ and $\T^b_c\subseteq \W$. If the second row of the recollement is restricted to a short exact sequence
$$\U\stackrel{i_*}{\longrightarrow}\W\stackrel{j^*}{\longrightarrow}\V,$$
then it induces a short exact sequence
$$\U/{\R}^b_c\stackrel{\overline{i_*}}{\longrightarrow}\W/\T^b_c\stackrel{\overline{j^*}}{\longrightarrow}\V/{\S}^b_c.$$
\item The second row induces a short exact sequence
$${\R}^-_c/{\R}^b_c\stackrel{\overline{i_*}}{\longrightarrow}\T^-_c/\T^b_c\stackrel{\overline{j^*}}{\longrightarrow}{\S}^-_c/{\S}^b_c.$$
\end{enumerate}
\end{thm}

The above two theorems yield a direct corollary.

\begin{cor}\textnormal{(Corollary~\ref{cor:for Tbc})}\label{cor:C}
Let $R$ be a commutative noetherian ring, let $\R, \S$ and $\T$ be locally Hom-finite coherent approximable $R$-linear triangulated categories and admit a recollement of triangulated categories
$$\quad\xymatrix{\R\ar^-{i_*=i_!}[r]
&\T\ar^-{j^!=j^*}[r]\ar^-{i^!}@/^1.2pc/[l]\ar_-{i^*}@/_1.6pc/[l]
&\S.\ar^-{j_*}@/^1.2pc/[l]\ar_-{j_!}@/_1.6pc/[l]}$$
Suppose that $\T$ has a compact generator $G_{\T}$ such that there is an integer $N$ with $\T(G_{\T}, G_{\T}[n])=0, n<N$, and $j^*(G_{\T})\in \S^c$(equivalently, the recollement extends one step downwards). Then the recollement induces a commutative diagram
$$\xymatrix{
\R_{sg}\ar[r]^{\overline{i_*}}\ar@{^(->}[d]&\T_{sg}\ar[r]^{\overline{j^*}}\ar@{^(->}[d]&\S_{sg}\ar@{^(->}[d]
\\
\R^-_c/\R^c\ar[r]^{\overline{i_*}}\ar[d]&\T^-_c/\T^c\ar[r]^{\overline{j^*}}\ar[d]&\S^-_c/\S^c\ar[d]
\\
\R^-_c/\R^b_c\ar[r]^{\overline{i_*}}&\T^-_c/\T^b_c\ar[r]^{\overline{j^*}}&\S^-_c/\S^b_c}$$
in which all rows and columns are short exact sequences of quotient categories.
\end{cor}

We further extend the localization theorems to the subcategories of strongly bounded objects in weakly approximable triangulated categories.
\begin{thm}\label{main-thm-l-sp}
\textnormal{(Theorem~\ref{thm-l-sp})}
Let  $\R, \S$ and $\T$ be weakly approximable triangulated categories admitting a recollement of triangulated categories
$$\xymatrix{\R\ar^-{i_*=i_!}[r]
&\T\ar^-{j^!=j^*}[r]\ar^-{i^!}@/^1.2pc/[l]\ar_-{i^*}@/_1.6pc/[l]
&\S.\ar^-{j_*}@/^1.2pc/[l]\ar_-{j_!}@/_1.6pc/[l]}$$
Then the following statements hold.
\begin{enumerate}
\item The first row of recollement is restricted to a short exact sequence up to direct summands
\begin{align*}
\xymatrixcolsep{2pc}\xymatrix{
{\mathcal{S}}^{{\rm sb}} \ar[r]^{j_!} &\mathcal{T}^{{\rm sb}} \ar[r]^{i^*}  &{\mathcal{R}}^{{\rm sb}}.
}
\end{align*}

\item Let $\mathcal{U}\subseteq \R,\mathcal{V}\subseteq \S$ and $\mathcal{W}\subseteq \T$ be triangulated subcategories satisfying ${\mathcal{R}}^{{\rm sb}} \subseteq\mathcal{U}$, ${\mathcal{S}}^{{\rm sb}}\subseteq\mathcal{V}\subseteq \mathcal{S}^-$ and $\mathcal{T}^{{\rm sb}}\subseteq \mathcal{W}$. 
If the first row of the recollement is restricted to a short exact sequence
\begin{align*}
\xymatrixcolsep{2pc}\xymatrix{
\mathcal{V} \ar[r]^{j_!} &\mathcal{W} \ar[r]^{i^*}  &\mathcal{U},
}
\end{align*}
then it induces a short exact sequence
\begin{align*}
\xymatrixcolsep{2pc}\xymatrix{
\mathcal{V}/\mathcal{S}^{{\rm sb}}\ar[r]^{j_!} &\mathcal{W}/\mathcal{T}^{{\rm sb}} \ar[r]^{i^*}  &\mathcal{U}/\mathcal{R}^{{\rm sb}}.
}
\end{align*}

\item The first row induces a short exact sequence
\begin{align*}
\xymatrixcolsep{2pc}\xymatrix{
\mathcal{S}^-/\mathcal{S}^{{\rm sb}}\ar[r]^{j_!} &\mathcal{T}^-/\mathcal{T}^{{\rm sb}} \ar[r]^{i^*}  &\mathcal{R}^-/\mathcal{R}^{{\rm sb}}.
}
\end{align*}
\end{enumerate}
\end{thm}

We also extend the localization theorems to the context of  bounded subcategories in weakly approximable triangulated categories.
\begin{thm}\label{main-thm-l-sb}
\textnormal{(Theorem~\ref{thm-l-sb})}
Let  $\R, \S$ and $\T$ be weakly approximable triangulated categories admitting a recollement of triangulated categories
$$\quad\xymatrix{\R\ar^-{i_*=i_!}[r]
&\T\ar^-{j^!=j^*}[r]\ar^-{i^!}@/^1.2pc/[l]\ar_-{i^*}@/_1.6pc/[l]
&\S.\ar^-{j_*}@/^1.2pc/[l]\ar_-{j_!}@/_1.6pc/[l]}$$
Then the following statements hold.
\begin{enumerate}
\item The second row in the recollement is restricted to a short exact sequence up to direct summands
\begin{align*}
\xymatrixcolsep{2pc}\xymatrix{\mathcal{R}^b \ar[r]^{i_*} &\mathcal{T}^b \ar[r]^{j^*}  &\mathcal{S}^b. 
}
\end{align*}

\item Let $\mathcal{U}\subseteq \R,\mathcal{V}\subseteq \S$ and $\mathcal{W}\subseteq \T$ be triangulated subcategories satisfying ${\mathcal{R}}^b \subseteq\mathcal{U} \subseteq {\mathcal{R}}^-$ (or ${\mathcal{R}}^b \subseteq\mathcal{U} \subseteq {\mathcal{R}}^+$), ${\mathcal{S}}^b\subseteq\mathcal{V}$ and $\mathcal{T}^b\subseteq \mathcal{W}$. 
If the second row of the recollement is restricted to a short exact sequence
\begin{align*}
\xymatrixcolsep{2pc}\xymatrix{
\mathcal{U} \ar[r]^{i_*} &\mathcal{W} \ar[r]^{j^*}  &\mathcal{V},
}
\end{align*}
then it induces a short exact sequence
\begin{align*}
\xymatrixcolsep{2pc}\xymatrix{
\mathcal{U}/\mathcal{R}^b  \ar[r]^{i_*} &\mathcal{W}/\mathcal{T}^b \ar[r]^{j^*}  &\mathcal{V}/\mathcal{S}^b. 
}
\end{align*}

\item  The second row induces short exact sequences
\begin{align*}
\xymatrixcolsep{2pc}\xymatrix{
\mathcal{R}^-/\mathcal{R}^b  \ar[r]^{i_*} &\mathcal{T}^-/\mathcal{T}^b \ar[r]^{j^*}  &\mathcal{S}^-/\mathcal{S}^b, \text{ and}\\
\mathcal{R}^+/\mathcal{R}^b  \ar[r]^{i_*} &\mathcal{T}^+/\mathcal{T}^b \ar[r]^{j^*}  &\mathcal{S}^+/\mathcal{S}^b. 
}
\end{align*}
\end{enumerate}
\end{thm}

Theorem \ref{main-thm-l-sp} and Theorem \ref{main-thm-l-sb} naturally lead to the following corollary, which provides a commutative diagram of short exact sequences.
\begin{cor}\label{main-cor-3-3-pb-}
\textnormal{(Corollary~\ref{cor-3-3-pb-})}
Let  $\R, \S$ and $\T$ be weakly approximable triangulated categories admitting a recollement of triangulated categories
$$\quad\xymatrix{\R\ar^-{i_*=i_!}[r]
&\T\ar^-{j^!=j^*}[r]\ar^-{i^!}@/^1.2pc/[l]\ar_-{i^*}@/_1.6pc/[l]
&\S.\ar^-{j_*}@/^1.2pc/[l]\ar_-{j_!}@/_1.6pc/[l]}$$
Suppose that $\T$ has a compact generator $G_{\T}$ such that there is an integer $N$ with $\T(G_{\T}, G_{\T}[n])=0$ for all $ n<N$, and $j^*(G_{\T})\in \S^{\rm sb}$(e.g. the recollement extends one step downwards). Then the recollement induces a commutative diagram
$$\xymatrix{
\mathcal{R}^{\rm big}_{sg}\ar[r]^{\overline{i_*}}\ar@{^(->}[d]&\mathcal{T}^{\rm big}_{sg}\ar[r]^{\overline{j^*}}\ar@{^(->}[d]&\mathcal{S}^{\rm big}_{sg}\ar@{^(->}[d]\\
\mathcal{R}^-/\mathcal{R}^{{\rm sb}}\ar[r]^{\overline{i_*}}\ar[d]&\mathcal{T}^-/\mathcal{T}^{{\rm sb}}\ar[r]^{\overline{j^*}}\ar[d]&\mathcal{S}^-/\mathcal{S}^{{\rm sb}}\ar[d]\\
\mathcal{R}^-/\mathcal{R}^b\ar[r]^{\overline{i_*}}&\mathcal{T}^-/\mathcal{T}^b\ar[r]^{\overline{j^*}}&\mathcal{S}^-/\mathcal{S}^b
}$$
in which all horizontal and vertical sequences are short exact sequences.
\end{cor}

Now, we apply Corollary \ref{cor:C} and Corollary \ref{main-cor-3-3-pb-} to derived categories of schemes. 
\begin{cor}\label{main-cor:scheme}
{\rm (Corollary \ref{cor:scheme})}
Let $X$ be a quasi-compact and separated scheme. Assume that $Z\subseteq X$ is a closed subset with quasi-compact complement $U$.
Then the following statements hold. 
\begin{enumerate}
\item There is a short exact sequence of big singularity categories
 $$\D_{sg,Z}^{\rm big}(X) \longrightarrow \D_{sg}^{\rm big}(X) \longrightarrow \D_{sg}^{\rm big}(U).$$

\item
If, in addition, $X$ is noetherian, then there is a short exact sequence of singularity categories
$$\D_{sg,Z}(X)\longrightarrow \D_{sg}(X)\longrightarrow \D_{sg}(U).$$

\end{enumerate}
\end{cor}

The contents of this paper are outlined as follows. In Section~\ref{sec:pre}, we fix notation
and recall some definitions and basic facts used throughout the paper. In particular, we recall the definitions of recollements and weakly approximable triangulated categories, and technical lemmas needed in proving the main theorems. Furthermore, we show some restriction results of functors and recollements in the framework of weakly approximable triangulated categories.  In Section~\ref{sec:loc theorem}, we prove Theorems~\ref{thm:A}, \ref{thm:B}, \ref{main-thm-l-sp}, and \ref{main-thm-l-sb}, as well as Corollaries~\ref{cor:C} and \ref{main-cor-3-3-pb-}. In Section~\ref{sec:applications}, we apply the localization theorems and corollaries in Section~\ref{sec:loc theorem} to some classical situations, such as the derived categories of rings, the derived categories of DG algebras and the derived categories of schemes.

\section{Preliminaries}\label{sec:pre}
In this section, we recall some notation, definitions, and basic facts that will be used throughout the paper.
\subsection{Notation}
Throughout, $R$ denotes a commutative noetherian ring. Let $\T$ be a triangulated category. We abbreviate the Hom-set $\Hom_{\T}(X, Y)$ by ${\T}(X, Y)$, and ${\T}(X, Y[<n])=0$ by the Hom-sets ${\T}(X, Y[i])=0$ for $i<n$. Let $\R, \S$ be two subcategories of $\T$, we define
$$\R*\S:=\{T\in\T\mid T~\text{admits a triangle}~R\to T\to S\to R[1], R\in\R, S\in\S\}.$$
Assume $\T$ has small coproducts and $G$ is an object of $\T$. Let $a, b\in\mathbb{Z}\cup\{-\infty, +\infty\}$.
We denote by $\overline{\langle G\rangle}^{[a,b]}$ the smallest subcategory of $\T$ which contains $G[-i]$ where $a\leq i\leq b$ and is closed under direct summands, coproducts and extensions. Let $n$ be a positive integer, the notation $\overline{\langle G\rangle}_n^{[a,b]}$ is defined inductively as
\begin{align*}
    \overline{\langle G\rangle}_1^{[a,b]}&=\text{direct summands of coproducts of objects in}~\{G[-i]\mid a\leq i\leq b\},\\
    \overline{\langle G\rangle}_n^{[a,b]}&=\text{direct summands of objects in}~\overline{\langle G\rangle}_1^{[a,b]}*\overline{\langle G\rangle}_{n-1}^{[a,b]}.
\end{align*}

Let $A$ be a ring. Denote by $A\Mod$, $A\smod$, $A\Proj$ and $A\proj$ the categories of all left $A$-modules, finitely presented $A$-modules, all projective  $A$-modules, and finitely generated projective $A$-modules, respectively.
Let $\D(A)$ be the derived category of $A\Mod$. 
We write $\D^b(A)$ and $\D^-(A)$ for the bounded and upper bounded derived categories of $A\Mod$. If $A$ is coherent, we write $\D^b(A\smod)$ and $\D^-(A\smod)$ for the bounded and upper bounded derived categories of $A\smod$.
We denote by $\K^b(A\Proj)$ and $\K^b(A\proj)$ the homotopy categories of bounded complexes of objects in $A\Proj$ and $A\proj$, respectively. We denote by $\K^{-,b}(A\Proj)$ and $\K^{-,b}(A\proj)$ the homotopy categories of upper bounded complexes of objects in $A\Proj$ and $A\proj$, respectively.

Let $X$ be a scheme and $Z\subseteq X$ be  a closed subset. We denote by $\D_{qc}(X)$ the category of all complexes of $\mathcal{O}_X$-modules with quasicoherent cohomology, and by $\D_{qc,Z}(X)$ the full subcategory of $\D_{qc}(X)$ consisting of complexes with quasicoherent cohomology supported on $Z$. Furthermore, suppose that $X$ is quasicompact and quasiseparated, and that the open set $X \setminus Z$ is quasicompact. We denote by $\D^-_{qc,Z}(X)$ (resp. $\D^+_{qc,Z}(X)$, and $\D^b_{qc,Z}(X)$) the full subcategory of $\D_{qc,Z}(X)$ consisting of complexes that are bounded above (resp. bounded below, and bounded), and by
$\D_{Z}^\mathsf{per}(X)$ the full subcategory of $\D_{qc,Z}(X)$ consisting of all perfect complexes supported on $Z$. Here, a complex is \textit{perfect} if it is locally isomorphic to a bounded complex of finite-rank vector bundles. 
If $X$ is noetherian, we further denote by $\D^-_{coh,Z}(X)$ (resp. $\D^b_{coh,Z}(X)$) the full subcategory of $\D^-_{qc,Z}(X)$ (resp. $\D^b_{qc,Z}(X)$) consisting of complexes whose cohomology sheaves are coherent and supported on $Z$. 
When $Z=X$, the support condition is vacuous, and we suppress the subscript $Z$ from the notation (for example, $\D^-_{qc}(X):=\D^-_{qc,X}(X)$).

\subsection{$t$-structures, short exact sequences and recollements}\label{sect-pre-rec}
Let $\T$ be a triangulated category. A pair $(\T^{\leq 0}, \T^{\geq 0})$ of subcategories of $\T$ is called a
{\em $t$-structure} (\cite{BBD}) if it satisfies the following conditions:
\begin{enumerate}
  \item $\T^{\leq 0}[1]\subseteq \T^{\leq 0}$ and $\T^{\geq 0}[-1]\subseteq \T^{\geq 0}$;
  \item $\T(\T^{\leq 0}, \T^{\geq 0}[-1])=0$;
  \item For any $T\in\T$, there exists a triangle
  $$U\longrightarrow T\longrightarrow V\longrightarrow U[1],$$
 where $U\in\T^{\leq 0}$ and $V\in\T^{\geq 0}[-1]$.
\end{enumerate}

For each $n\in \mathbb{Z}$, we denote $\T^{\leq n}:=\T^{\leq 0}[-n]$ and $\T^{\geq n}:=\T^{\geq 0}[-n]$. 
Two $t$-structures $(\T_1^{\le 0},\T_1^{\ge 0})$  and $(\T_2^{\le 0},\T_2^{\ge 0})$ on $\T$ are said to be \textit{equivalent} if there exists a natural number $n$ such that $\T_1^{\le -n}\subseteq \T_2^{\le 0}\subseteq \T_1^{\le n}$. It is straightforward to verify that this defines an equivalence relation.

Let $\mathcal{T}$ be a triangulated category with arbitrary coproducts, and let $G$ be a compact generator of $\mathcal{T}$ . According to \cite[Theorem A.1]{ALS03} or its generalization \cite[Theorem 2.3]{N18d}, the pair
$$(\T^{\leq 0}_G, \T^{\geq 0}_G):=(\overline{\langle G\rangle}^{(-\infty, 0]}, (\overline{\langle G\rangle}^{(-\infty, -1]})^{\perp})$$
is a $t$-structure on $\mathcal{T}$. Both $\T^{\leq 0}_G$ and $\T^{\geq 0}_G$ are closed under coproducts in $\T$.
This $t$-structure is called the $t$-structure  on $\T$ \textit{generated by $G$}. Furthermore, if $H$ is another compact object of $\mathcal{T}$, then the $t$-structure  on $\T$ generated by $G$ and by $H$ are equivalent.
The {\em preferred equivalence class of} $t$-structures on $\T$ is defined to be the equivalence class containing the $t$-structure $(\T^{\leq 0}_G, \T^{\geq 0}_G)$ generated by $G$.

\begin{defn}
A sequence of triangulated categories
    $$\R\stackrel{F}{\longrightarrow}\T\stackrel{G}{\longrightarrow}\S$$
is a {\em short exact sequence} if it satisfies
\begin{enumerate}
    \item $F$ is fully faithful;
    \item $GF=0$; and
    \item the induced functor $\overline{G}: \T/\R\to \S$ is an equivalence.
\end{enumerate}
\end{defn}
This notion is equivalent to the definition of Verdier quotient of triangulated categories. The sequence is called a {\em short exact sequence up to direct summands} if $\overline{G}$ in (3) is an equivalence up to direct summands.

\begin{defn}
Let $\T$, $\X$ and $\Y$ be triangulated categories.
We say that $\T$ is a {\em recollement} (\cite{BBD}) of $\X$
and $\Y$ if there is a diagram of six triangulated functors
$$\xymatrix{\X\ar^-{i_*=i_!}[r]&\T\ar^-{j^!=j^*}[r]
\ar^-{i^!}@/^1.2pc/[l]\ar_-{i^*}@/_1.6pc/[l]
&\Y\ar^-{j_*}@/^1.2pc/[l]\ar_-{j_!}@/_1.6pc/[l]}$$ such
that
\begin{enumerate}
    \item $(i^*,i_*),(i_!,i^!),(j_!,j^!)$ and $(j^*,j_*)$ are adjoint
pairs;
    \item $i_*,j_*$ and $j_!$ are fully faithful functors;
    \item $i^!j_*=0$; and
    \item for each object $T\in\T$, there are two triangles in
$\T$
$$
i_!i^!(T)\to T\to j_*j^*(T)\to i_!i^!(T)[1],
$$
$$
j_!j^!(T)\to T\to i_*i^*(T)\to j_!j^!(T)[1].
$$
\end{enumerate}
\end{defn}
We say that $\T$ is a {\em half recollement} of $\X$ and $\Y$ if there is a diagram
$$\xymatrix{\X\ar^-{i_*=i_!}[r]
&\T\ar^-{j^!=j^*}[r]\ar^-{i^!}@/^1.2pc/[l]
&\Y.\ar^-{j_*}@/^1.2pc/[l]}$$
that satisfies the conditions above. Indeed, a half recollement coincides with the concept of Bousfield localization \cite[Definition 9.1.1]{N01}.
In this case, the rows are exact sequences of triangulated categories (\cite[Proposition 4.9.1]{K10} and its dual).

Let $\T, \S$ be triangulated categories endowed with $t$-structures $(\T^{\leq 0}, \T^{\geq 0})$ and $(\S^{\leq 0}, \S^{\geq 0})$, respectively. A functor $\mathbf{F}:\T\to \S$ is {\em right $t$-exact} if $\mathbf{F}(\T^{\leq 0})\subseteq \S^{\leq 0}$, and {\em left $t$-exact} if $\mathbf{F}(\T^{\geq 0})\subseteq \S^{\geq 0}$, and {\em $t$-exact} if both left and right $t$-exact.

Let $\T$ be a recollement of $\X$ and $\Y$. Let $(\X^{\leq 0}, \X^{\geq 0})$ and $(\Y^{\leq 0}, \Y^{\geq 0})$ be $t$-structures of $\X$ and $\Y$, then we get a glued $t$-structure $(\T^{\leq 0}, \T^{\geq 0})$ on $\T$, where
 \begin{align*}
    \T^{\leq 0}=& \{T\in\T\mid i^{*}(T)\in \X^{\leq 0}, j^{!}(T)\in \Y^{\leq 0}\}, \\
    \T^{\geq 0}=& \{T\in\T\mid i^{!}(T)\in \X^{\geq 0}, j^{!}(T)\in \Y^{\geq 0}\}.
 \end{align*}
With respect to these $t$-structures, according to \cite[1.3.17(iii)]{BBD}, the functors $i^*$, $j_!$ are right $t$-exact, $i_*$, $j^*$ are $t$-exact and $i^!$, $j_*$ are left $t$-exact.

The following result establishes a connection between recollements and short exact sequences of triangulated categories.
It is well known; see  \cite[1.4.4, 1.4.5, 1.4.8]{BBD}, \cite[Chapter III, Lemma 1.2 (1) and Chapter IV, Proposition 1.11]{BR07} .

\begin{lem}
\label{lem-rec-es}
Let $\T$, $\X$ and $\Y$ be triangulated categories.
\begin{enumerate}
\item Assume that there is a short exact sequence of triangulated categories {\rm (}possibly up to direct summands{\rm)}
$$\xymatrix{\mathcal{X} \ar[r]^{i_*} &\mathcal{T}  \ar[r]^{j^*}  &\mathcal{Y}.
}$$
Then $i_*$ has a left adjoint {\rm (}respectively, right adjoint{\rm)} if and only if $j^*$ has a left adjoint {\rm (}respectively, right adjoint{\rm)}. In this case, $i_*$ and $j^*$ together with their left adjoints {\rm (}respectively, right adjoints{\rm)} form a left recollement {\rm (}respectively, right recollement{\rm)} of $\mathcal{T}$ in terms of $\mathcal{X}$ and $\mathcal{Y}$.
Conversely,  the two rows of a left recollement are short exact sequences of triangulated categories.

\item Assume that there is a diagram
$$\xymatrix{\X\ar^-{i_*=i_!}[r]&\T\ar^-{j^!=j^*}[r]
\ar^-{i^!}@/^1.2pc/[l]\ar_-{i^*}@/_1.6pc/[l]
&\Y\ar^-{j_*}@/^1.2pc/[l]\ar_-{j_!}@/_1.6pc/[l]}$$ such that $(i^*,i_*),(i_!,i^!),(j_!,j^!)$ and $(j^*,j_*)$ are adjoint
pairs. If any one of the three rows is a short exact sequence of triangulated categories, then the diagram is a recollement. Conversely, If it is a recollement, then all the three rows are short exact sequences of triangulated categories. 
\end{enumerate}
\end{lem}

\begin{lem}\label{lem-rec-ex-down}
Consider the following recollement of triangulated categories:
$$\xymatrix{\X\ar^-{i_*=i_!}[r]&\T\ar^-{j^!=j^*}[r]
\ar^-{i^!}@/^1.2pc/[l]\ar_-{i^*}@/_1.6pc/[l]
&\Y.\ar^-{j_*}@/^1.2pc/[l]\ar_-{j_!}@/_1.6pc/[l]}$$ 
Then the following conditions are equivalent:
\begin{enumerate}
\item $i_*$ preserves compact objects.

\item $j^*$ preserves compact objects.

\item $i^!$ admits a right adjoint.

\item $j_*$ admits a right adjoint.

\item the recollement extends one step downwards.
\end{enumerate}
In this case, the original recollement restricts to a left recollement of compact subcategories:
$$\xymatrix{\X^c\ar^-{i_*=i_!}[r]
&\T^c\ar^-{j^!=j^*}[r]
\ar_-{i^*}@/_1.6pc/[l]
&\Y^c.\ar_-{j_!}@/_1.6pc/[l]}$$ 
\end{lem}

The following lemma is a triangulated-category version of "$3\times 3$ Lemma" which will be used to prove the main theorems in Section~\ref{sec:loc theorem}.

\begin{lem}\textnormal{(\cite[Lemma 3.2]{KY16})}\label{lem:third isomorphism}
  Let 
  $$\xymatrix{
  \U\ar[r]^{i|_{\U}}\ar@{^(->}[d]&\W\ar@{^(->}[d]\ar[r]^{\mathbf{F}|_{\W}}&\V\ar@{^(->}[d]
  \\
  \R\ar@{^(->}[r]^i&\T\ar[r]^{\mathbf{F}}&\S
  }$$
be a commutative diagram of triangulated categories and triangulated functors. Suppose the first row is exact up to direct summands and the second is exact. Then the diagram can be completed into a commutative diagram
  $$\xymatrix{
  \U\ar[r]^{i|_{\U}}\ar@{^(->}[d]&\W\ar@{^(->}[d]\ar[r]^{\mathbf{F}|_{\W}}&\V\ar@{^(->}[d].
  \\
  \R\ar@{^(->}[r]^i\ar[d]&\T\ar[r]^{\mathbf{F}}\ar[d]^q&\S\ar[d]
  \\
  \R/\U\ar[r]^{\overline{i}}&\T/\W\ar[r]^{\overline{\mathbf{F}}}&\S/\V
  }$$
If moreover, $\overline{i}$ is fully faithful, then the third row is exact.
\end{lem}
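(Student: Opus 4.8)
The plan is to verify the three defining conditions of a short exact sequence for the bottom row $\R/\U \xrightarrow{\overline{i}} \T/\W \xrightarrow{\overline{\mathbf{F}}} \S/\V$, building on what the first two rows already give us. First I would construct the bottom row itself: since $i(\U) \subseteq \W$ and $\mathbf{F}(\W) \subseteq \V$ (commutativity of the top two rows), the functor $i$ descends to $\overline{i}: \R/\U \to \T/\W$ and $\mathbf{F}$ descends to $\overline{\mathbf{F}}: \T/\W \to \S/\V$ by the universal property of Verdier quotients — any functor killing a thick subcategory factors through the quotient. This also immediately gives commutativity of the lower squares, so the diagram in the statement genuinely completes.

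Next I would check the two easy axioms for the bottom row. Condition (2), that $\overline{\mathbf{F}}\,\overline{i} = 0$, follows because $\mathbf{F} i = 0$ on $\R$ (the second row is exact, hence $\mathbf{F} i = 0$), so the composite $\T \to \S \to \S/\V$ precomposed with $i$ is zero, and passing to the quotient $\R/\U$ changes nothing. For the image side: objects of $\overline{i}(\R/\U)$ are images of objects $R \in \R$, and $\overline{\mathbf{F}}(\overline{i}(R)) = \overline{\mathbf{F} i (R)} = 0$. Condition (1), that $\overline{i}$ is fully faithful, is exactly the extra hypothesis ``$\overline{i}$ is fully faithful'' in the statement, so there is nothing to prove there — this is why it appears as a hypothesis rather than a conclusion.

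The real content is condition (3): the induced functor $\widetilde{\mathbf{F}}: (\T/\W)/(\R/\U) \to \S/\V$ is an equivalence. The key observation is the standard isomorphism of Verdier quotients $(\T/\W)/(\R/\U) \cong \T/\W'$, where $\W'$ is the thick subcategory of $\T$ generated by $\W$ and $\R$; since the first row is exact up to direct summands, $\R/\U \to \T/\W$ has image a thick subcategory whose preimage in $\T$ is (up to direct summands) the thick closure of $\R \cup \W$. So I reduce to showing $\T/\langle \R, \W\rangle \to \S/\V$ is an equivalence. For \emph{essential surjectivity}: the second row being exact means $\mathbf{F}$ induces $\T/\R \xrightarrow{\sim} \S$, so every object of $\S$ is $\mathbf{F}(T)$ for some $T$, whence every object of $\S/\V$ comes from $\T$; essential surjectivity up to direct summands of $\mathbf{F}|_{\W}: \W \to \V$ (from the first row) ensures nothing is lost on the quotienting side, giving essential surjectivity on the nose (or at least up to summands, matching the claimed conclusion). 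For \emph{fully faithfulness}, I would compute morphisms in $\S/\V$ via roofs: a morphism $\mathbf{F}(X) \to \mathbf{F}(Y)$ in $\S/\V$ is a roof $\mathbf{F}(X) \leftarrow Z' \to \mathbf{F}(Y)$ with cone of the left leg in $\V$; using that $\mathbf{F}: \T/\R \to \S$ is an equivalence I can lift $Z'$ and the roof data to $\T/\R$, and using essential surjectivity up to summands of $\W \to \V$ I can arrange the cone to be represented by an object of $\W$, producing a roof in $\T/\langle\R,\W\rangle$ mapping to the given one; injectivity on morphisms is the dual bookkeeping, showing a morphism in $\T/\langle\R,\W\rangle$ that dies in $\S/\V$ already dies upstairs.

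The main obstacle I anticipate is the careful roof-calculus manipulation in the fully-faithfulness step, specifically the interplay between the two different quotients (by $\R$ versus by $\W$) and the ``up to direct summands'' slack in the first row's exactness — one must track summands consistently so that the final conclusion is genuinely a short exact sequence and not merely one up to summands, or alternatively accept the weaker conclusion if that is what the hypotheses permit. A clean way to sidestep much of the roof manipulation is to invoke Lemma~\ref{fully-faithful} (Keller's criterion) to get that $\widetilde{\mathbf{F}}$ is fully faithful directly from the configuration, reducing the remaining work to essential surjectivity, which is comparatively routine given exactness of the second row. I would structure the proof to lean on that lemma wherever possible, keeping the explicit diagram chase to a minimum.
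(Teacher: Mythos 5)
Your proposal is correct in substance but takes a genuinely different route from the paper. The paper's proof is two lines: it invokes \cite[Lemma 3.2]{KY16}, which already asserts that $\overline{\mathbf{F}}\colon \T/\W\to\S/\V$ is a quotient functor with kernel $\mathsf{thick}(qi(\R))$, and then merely observes that $\overline{i}$ is fully faithful (the extra hypothesis) and dense up to direct summands onto that kernel, whence the third row is exact. You instead re-derive the Kalck--Yang content from scratch: identify $(\T/\W)/\overline{i}(\R/\U)$ with $\T/\langle\R,\W\rangle$ by the third isomorphism theorem for Verdier quotients (this step needs nothing about the first row), and then compare $\T/\langle\R,\W\rangle$ with $\S/\V$ using the equivalence $\T/\R\simeq\S$ from the second row together with $\mathsf{thick}_{\S}(\V)=\mathsf{thick}_{\S}(\mathbf{F}(\W))$, which is exactly where the density up to direct summands in the first row enters. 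This works, and your hedge about possibly only getting exactness ``up to summands'' is unnecessary: a Verdier quotient by a triangulated subcategory coincides with the quotient by its thick closure, so the summand slack only affects the identification of the kernel of $\overline{\mathbf{F}}$, not the exactness of the third row, and essential surjectivity is on the nose, as you note. Two cautions: the cleanest execution of your step (3) is transport along the equivalence $\T/\R\simeq\S$, i.e. $\T/\langle\R,\W\rangle\simeq(\T/\R)/\langle q_{\R}(\W)\rangle\simeq\S/\mathsf{thick}(\mathbf{F}(\W))=\S/\V$, rather than a hand-rolled roof calculus, which is where bookkeeping errors would hide; and your suggested shortcut of applying Lemma~\ref{fully-faithful} to $\widetilde{\mathbf{F}}$ does not apply --- that lemma concerns a functor $\R/\U\to\T/\S$ induced by an inclusion of subcategories of one ambient category, not a functor induced by the quotient functor $\mathbf{F}$ between two different ambient categories --- so drop that and rely on the transport argument (or simply cite \cite[Lemma 3.2]{KY16}, as the paper does). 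What the paper's route buys is brevity; what yours buys is a self-contained argument making visible exactly where each hypothesis (exactness of the second row, density up to summands in the first, full faithfulness of $\overline{i}$) is used.
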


\begin{proof}
By \cite[Lemma 3.2]{KY16}, $\overline{\mathbf{F}}$ is a triangulated quotient functor with kernel $\mathsf{thick}(qi(\R))$. Note that  $\overline{i}: \R/\U\to \mathsf{thick}(qi(\R))$ is fully faithful and dense up to direct summands. Thus, the third row is exact.
\end{proof}

The lemma below provides useful sufficient conditions to detect whether the induced functor $\overline{i}$ in the last lemma is fully faithful or not.

\begin{lem}\textnormal{(\cite[Lemma 4.7.1]{K10}, \cite[Lemma 10.3]{K96})}\label{fully-faithful}
 Let $\T$ be a triangulated category with two full triangulated subcategories $\R$ and $\S$. Then we put $\U=\S\cap\R$ and we can form the following commutative diagram of exact functors
$$\xymatrix{
\U\ar@{^(->}[r]\ar@{^(->}[d]&
\R\ar^{\textnormal{cano.}\quad}[r]\ar@{^(->}[d]&
\R/\U\ar_{\mathbf{J}}[d]\\
\S\ar@{^(->}[r]&
\T\ar^{\textnormal{cano.}\quad}[r]&
\T/\S
}.$$
Assume that either
\begin{enumerate}
\item  every morphism from an object in $\R$ to an object in $\S$ factors some object in $\U$,or

\item every morphism from an object in $\S$ to an object in $\R$ factors some object in $\U$.
\end{enumerate}
Then the induced functor $\mathbf{J}: \R/\U\to \T/\S$ is fully faithful.
\end{lem}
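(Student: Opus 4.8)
The plan is to verify directly that the canonical functor $\mathbf{J}\colon \R/\U \to \T/\S$ is full and faithful by analysing morphisms in the Verdier quotients as roofs (fractions). Recall that a morphism $X \to Y$ in $\T/\S$ is represented by a roof $X \xleftarrow{s} Z \xrightarrow{f} Y$ with $\Cone(s) \in \S$, and two roofs are identified if they can be dominated by a common third roof; faithfulness of $\mathbf{J}$ amounts to showing that a morphism of $\R/\U$ whose image in $\T/\S$ is zero was already zero, while fullness amounts to showing every roof in $\T/\S$ between objects of $\R$ can be replaced by an equivalent one whose apex lies in $\R$ and whose structure morphism has cone in $\U$. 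I would treat case (1) in detail and remark that case (2) follows by passing to the opposite category (which swaps the roles of the two factorization hypotheses while preserving the statement).

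First I would set up faithfulness. Suppose $\alpha\colon X \to Y$ in $\R/\U$ (represented by a roof $X \xleftarrow{s} Z \xrightarrow{f} Y$ with $Z \in \R$ and $\Cone(s)\in\U$) maps to $0$ in $\T/\S$. By the standard description of vanishing in a Verdier quotient, $f$ factors in $\T$ through an object of $\S$: there is $S \in \S$ with $f = (Z \xrightarrow{g} S \xrightarrow{h} Y)$. Now $g$ is a morphism from $Z \in \R$ to $S \in \S$, so by hypothesis (1) it factors through some $U \in \U$, say $g = (Z \to U \to S)$. Hence $f$ itself factors through $U \in \U$, which exactly says $\alpha = 0$ in $\R/\U$. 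This is the crux and it is short once the roof calculus is in place.

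Next, fullness. Take a morphism $X \to Y$ in $\T/\S$ with $X, Y \in \R$, represented by $X \xleftarrow{s} Z \xrightarrow{f} Y$ where $\Cone(s) \in \S$. Complete $s$ to a triangle $Z \xrightarrow{s} X \xrightarrow{p} C \to Z[1]$ with $C = \Cone(s) \in \S$; the connecting map $X \to C[?]$ — more precisely, rotating, the map $C[-1] \to Z$ whose cone is $X$ — is a morphism from $\S$ to $\R$ (after a shift, $C[-1]\in\S$, $Z\in\R$), so in case (2) it factors through $\U$, and one then builds a better roof with apex in $\R$. In case (1) one instead applies the factorization to the map $Z \to X$ read the other way, or dualizes; the bookkeeping is routine but needs care with which triangle one completes. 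The upshot in either case is that $X\to Y$ is the image under $\mathbf J$ of a genuine roof in $\R/\U$. I would also note that well-definedness (independence of the chosen roof, i.e.\ that equivalent roofs in $\T/\S$ pull back to equivalent roofs in $\R/\U$) is handled by the same factorization trick applied to the domination roofs.

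The main obstacle I anticipate is not conceptual but organizational: keeping the roof/fraction calculus honest through the shifts and cone-completions, and being disciplined about the asymmetry between hypotheses (1) and (2) — exactly one of them is what is needed for faithfulness as written, and the other is the one that powers fullness, so the cleanest writeup proves the lemma under (1) and then invokes $\T \rightsquigarrow \T^{\op}$, $\R \rightsquigarrow \R^{\op}$, $\S \rightsquigarrow \S^{\op}$ to get (2) for free. Since this is a cited result (\cite[Lemma 10.3]{K96}, \cite[Lemma 4.7.1]{K10}), in the paper itself I would most likely just reproduce the short argument for faithfulness above, sketch fullness, and refer to Keller for the remaining diagram chase.
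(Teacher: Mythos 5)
The paper itself gives no proof of this lemma (it is quoted from Krause \cite[Lemma 4.7.1]{K10} and Keller \cite[Lemma 10.3]{K96}), so your proposal is measured against the standard fraction-calculus argument, which is indeed the right framework, and your faithfulness step is correct and complete under hypothesis (1): if the image of a roof $X \xleftarrow{s} Z \xrightarrow{f} Y$ (with $Z\in\R$, $\Cone(s)\in\U$) vanishes in $\T/\S$, then $f$ factors as $Z\to S\to Y$ with $S\in\S$, hypothesis (1) applied to $Z\to S$ (note $Z\in\R$ here) makes $f$ factor through $\U$, hence the roof is already zero in $\R/\U$.

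The fullness step, however, contains a genuine error. For a morphism in $\T/\S$ between $X,Y\in\R$, represented by $X \xleftarrow{s} Z \xrightarrow{f} Y$ with $\Cone(s)\in\S$, the apex $Z$ is an \emph{arbitrary} object of $\T$ --- only $X$ and $Y$ lie in $\R$; that the apex need not be in $\R$ is exactly why fullness is nontrivial. So your assertion that the rotated connecting map $C[-1]\to Z$ is ``a morphism from $\S$ to $\R$'' is unfounded, and neither hypothesis applies to it, nor to the map $Z\to X$ ``read the other way''. The correct key step under hypothesis (1) is to factor the map $g\colon X\to C=\Cone(s)$, which \emph{does} go from $\R$ to $\S$, as $X\xrightarrow{a}U\xrightarrow{b}C$ with $U\in\U$; complete $a$ to a triangle $W\to X\xrightarrow{a}U\to W[1]$, so $W\in\R$ and $\Cone(W\to X)\simeq U\in\U$; since $(W\to X)\circ$ composed with $g$ is zero, the map $W\to X$ lifts through $s$ via some $t\colon W\to Z$, and the refined roof $X\xleftarrow{st}W\xrightarrow{ft}Y$ is equivalent to the original one in $\T/\S$ while being a genuine fraction of $\R/\U$. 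Relatedly, your remark that one hypothesis powers faithfulness and the other fullness is off: with the left-fraction convention, hypothesis (1) alone yields both halves (as above), and hypothesis (2) follows by passing to opposite categories --- which agrees with your final plan but contradicts your middle paragraph. With the fullness step corrected as indicated, your outline becomes the standard proof.
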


\subsection{Weakly approximable triangulated categories}
\subsubsection{Pre-approximable and approximable categories}
We briefly recall the notion of approximable triangulated categories introduced by Neeman (see \cite[Definition 0.21]{N18a}).

\begin{defn}\label{def:appro}
Let $\T$ be a triangulated category with coproducts.
\begin{enumerate}
\item The category $\T$ is called {\em pre-approximable} if it admits a compact generator $G$ such that there exists an integer $n>0$ with $\T(G, G[i])=0$ for all $i\geq n$.
\item The category $\T$ is called {\em weakly approximable} if, in addition, for every $X\in \T_G^{\le 0}$ there exists a triangle
$$E \longrightarrow X \longrightarrow D\longrightarrow E[1]$$
such that $E \in {\overline{\langle G\rangle}}^{[-n, n]}$ and $D \in \mathcal{T}^{\leq-1}_G$.
\item The category $\T$ is called \emph{approximable} if condition (ii) can be strengthened by requiring
$E \in {\overline{\langle G\rangle_n}}^{[-n, n]}$.
\end{enumerate}
\end{defn}

Indeed, in Neeman's original definition, the $t$-structure is actually in the preferred equivalence class, namely, it is equivalent to $(\mathcal{T}^{\leq 0}_G, \mathcal{T}^{\geq 0}_G)$ (\cite[Proposition 2.4]{N18a}). The definitions of (pre-, weakly) approximable triangulated categories are independent of the choices of the compact generator (\cite[Proposition 2.6]{N18a}).

According to \cite[Remark 3.3]{N18a}, if a triangulated category $\T$ has a compact generator $G$ satisfying $\T(G, G[i])=0, \forall i\geq 1$, then $\T$ is approximable. Furthermore, the following proposition provides a more useful detection.

\begin{prop}\textnormal{(\cite[Corollary 4.3]{BV20})}\label{prop:approximation gene}
Let $\T$ be a triangulated category admitting arbitrary coproducts and with a compact generator $G$.
\begin{enumerate}
    \item If $\T(G, G[>1])=0$, then $\T$ is weakly approximable.
    \item If $G=\bigoplus_{1\leq i\leq n}G_i$ with $\T(G, G[>1])=0$ and $\T(G_i, G_j[1])=0$ for $i\leq j$, then $\T$ is approximable.
\end{enumerate}
\end{prop}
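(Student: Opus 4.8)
The plan is to verify Definition~\ref{def:appro} directly for the generator $G$ given in the statement. In both cases $\Hom(G,G[i])=0$ for all $i\geq 2$, so condition (i) holds with $n=2$, and we may freely enlarge $n$ later; hence the content is to produce, for an arbitrary $X\in\T^{\leq 0}_G$, a triangle $E\to X\to D\to E[1]$ with $D\in\T^{\leq -1}_G$ and with $E\in\overline{\langle G\rangle}^{[-1,0]}\subseteq\overline{\langle G\rangle}^{[-n,n]}$ in case (1), resp. $E\in\overline{\langle G\rangle}^{[-1,0]}_{n}\subseteq\overline{\langle G\rangle}^{[-n,n]}_{n}$ in case (2). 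The first move I would make is to translate the requirement on $D$ into cohomology: the $t$-structure $(\T^{\leq 0}_G,\T^{\geq 0}_G)$ is smashing, so $H^{0}$ is a coproduct-preserving homological functor, and since $\overline{\langle G\rangle}^{[-1,0]}\subseteq\T^{\leq 0}_G$, for any $f\colon E\to X$ with $E\in\overline{\langle G\rangle}^{[-1,0]}$ the cone $D=\Cone(f)$ lies in $\T^{\leq 0}_G$ automatically; a short chase in the long exact cohomology sequence then shows $D\in\T^{\leq -1}_G$ if and only if $H^{0}(E)\to H^{0}(X)$ is an epimorphism in the heart $\heartsuit$. So the task becomes to build an object of $\overline{\langle G\rangle}^{[-1,0]}$ (resp. of $\overline{\langle G\rangle}^{[-1,0]}_{n}$) with a morphism to $X$ surjective on $H^{0}$.

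To do this I would analyse the heart. Because $H^{0}$ is coproduct-preserving and kills $G[j]$ for $j\geq 1$, the heart $\heartsuit$ is generated by $H^{0}(G)$ under (possibly transfinite) coproducts, extensions, quotients and direct summands; combined with the vanishing $\Hom(G,\T^{\leq -2}_G)=0$ forced by $\Hom(G,G[>1])=0$ (which is exactly what keeps the relevant obstruction groups in a single degree), this lets one realize $H^{0}(X)$ as $H^{0}$ of an object built from copies of $G$ by an iterated-cone (Postnikov-type) construction, equipped with a comparison morphism to $X$ that is surjective on $H^{0}$. For (1) I would carry out this construction as a tower $E^{(0)}\to E^{(1)}\to\cdots$ over $X$, each stage lying in some $\overline{\langle G\rangle}^{[-1,0]}_{k}$; since $\overline{\langle G\rangle}^{[-1,0]}$ is closed under coproducts it is closed under homotopy colimits of such towers, so the homotopy colimit $E$ lies in $\overline{\langle G\rangle}^{[-1,0]}$ and, by construction, $H^{0}(E)\to H^{0}(X)$ is epic. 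This gives $D\in\T^{\leq -1}_G$ and hence condition (ii).

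For (2), the extra vanishing $\Hom(G_{i},G_{j}[1])=0$ for $i\leq j$ makes $G_{1},\dots,G_{n}$ into a directed family, which I would use to make the above tower terminate after at most $n$ extension steps: peel off the $G_{n}$-cells of $X$, then the $G_{n-1}$-cells of the resulting cone, and so on down to $G_{1}$, the triangular $\Ext^{1}$-vanishing ensuring that treating $G_{i}$ cannot re-introduce obstructions supported on any $G_{j}$ with $j>i$. This yields $E\in\overline{\langle G\rangle}^{[-1,0]}_{n}$ and hence condition (iii). The step I expect to be the main obstacle is precisely this bounded-length bookkeeping in (2): arranging the iterated octahedra so that, after peeling off $G_{n},\dots,G_{1}$, the resulting $E$ genuinely has extension length $\leq n$ rather than merely finite, and checking that the directed $\Ext^{1}$-hypothesis really does forbid all ``feedback'' among the summands. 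The analogous point in (1), that the transfinite $G$-cell tower converges and exhausts $H^{0}(X)$, is easier, and is where $\Hom(G,G[>1])=0$ is used to keep the induction under control.
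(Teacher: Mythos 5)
First, a point of reference: the paper does not prove this proposition at all --- it is imported verbatim from \cite[Corollary 4.3]{BV20}, where it is established with weight-structure techniques --- so the only question is whether your sketch stands on its own, and as written it does not. Your reduction is fine: for $E,X\in\T^{\leq 0}_G$ the cone $D$ of $E\to X$ lies in $\T^{\leq 0}_G$, and $D\in\T^{\leq -1}_G$ iff $H^0(E)\to H^0(X)$ is epic; likewise $\Hom(G,\T^{\leq -2}_G)=0$ does follow from $\Hom(G,G[>1])=0$ and compactness. But everything after that is a plan rather than a proof. The core of (1) --- the tower $E^{(0)}\to E^{(1)}\to\cdots$ over $X$ whose colimit is $H^0$-epic --- is never constructed: you do not say what each stage attaches, why the obstruction created by $\Hom(G,G[1])\neq 0$ (exactly what separates this case from Neeman's Remark 3.3, where one canonical $G$-cell attachment suffices because $H^0(G)$ is projective in the heart) dies in the limit, nor why $H^0$ of the homotopy colimit still surjects onto $H^0(X)$; that last point needs exactness of filtered colimits in the heart (a nontrivial fact for compactly generated $t$-structures) or an argument bypassing the heart. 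The closure claim is also false as stated: a homotopy colimit of a tower in $\overline{\langle G\rangle}^{[-1,0]}$ is a cone on a map of coproducts and so only lands in $\overline{\langle G\rangle}^{[-2,0]}$ --- harmless, since $n$ may be enlarged, but symptomatic. A cleaner route for (1) avoids towers entirely: the class $\overline{\langle G\rangle}^{[0,n]}*\T^{\leq -1}_G$ contains $G[j]$ for all $j\geq 0$, is closed under coproducts, and is closed under extensions because $\Hom\bigl(\overline{\langle G\rangle}^{[0,n]},\T^{\leq -2}_G\bigr)=0$ forces every triangle exhibiting an object of $\T^{\leq -1}_G*\overline{\langle G\rangle}^{[0,n]}$ to split; together with your $H^0$-epi criterion to absorb direct summands, this yields $\T^{\leq 0}_G\subseteq \overline{\langle G\rangle}^{[0,n]}*\T^{\leq -1}_G$, which is statement (1).

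For (2) the situation is worse: the bounded-length bookkeeping is precisely the difference between ``weakly approximable'' and ``approximable'', and you explicitly flag it as the main obstacle and leave it open, so part (2) is simply not proved. The mechanism that makes it work is not an ordering of cell attachments on $X$ but a splitting statement about the building blocks themselves: since $\Hom(G_i,G_j[1])=0$ for $i\leq j$ (and $\Hom(G,G[>1])=0$), every extension of coproducts of copies of the $G_i$ ``in the wrong order'', and every self-extension within a fixed index, splits; consequently any object of the relevant $\overline{\langle G\rangle}$-class can be reassembled as a filtration with at most $n$ layers, each layer a coproduct of shifted copies of a single $G_i$, which is exactly what places $E$ in $\overline{\langle G\rangle}_n^{[-n,n]}$. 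Your ``no feedback among the summands'' assertion is a gesture toward this splitting argument, but without it (and without the octahedral bookkeeping you acknowledge omitting) the claimed bound on the extension length of $E$ is unsupported. As it stands, the proposal establishes neither (1) nor (2) beyond the initial reduction.
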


\subsubsection{Intrinsic subcategories in a weakly approximable triangulated category}\label{subsec:subcat}
Let $\T$ be a triangulated category with cproducts and a compact generator $G$, and let
$(\T^{\le0},\T^{\ge0})$ be the $t$-structure generated by $G$.
Associated with this $t$-structure are several intrinsic triangulated
subcategories introduced in \cite{N18a}.
\begin{enumerate}
\item \emph{Bounded above objects}: $\mathcal{T}^-\coloneqq \cup_{m=1}^\infty\mathcal{T}^{\leq m}$;
\item \emph{Bounded below objects}: $\mathcal{T}^+\coloneqq \cup_{m=1}^\infty\mathcal{T}^{\geq-m}$;
\item \emph{Bounded objects}: $\mathcal{T}^b\coloneqq \mathcal{T}^-\cap\mathcal{T}^+$;
\item \textit{Strongly bounded objects}: $\T^{\rm sb}=\cup_{A\le B}\overline{\langle G\rangle}^{[A,B]}$;
\item \emph{Compact objects}: $\mathcal{T}^c$;
\item \emph{Pseudo-compact objects}: $\mathcal{T}^-_c$, where an object $F\in\mathcal{T}$ belongs to $\mathcal{T}^-_c$ if, for any integer $m>0$, there exists in $\mathcal{T}$ a distinguished triangle $E\ra F\ra D$ with $E\in\mathcal{T}^c$ and with $D\in\mathcal{T}^{\leq-m}$. Thus
\[
\mathcal{T}^-_c=\cap_{m=1}^\infty\big(\mathcal{T}^c*\mathcal{T}^{\leq-m}\big);
\]
\item \emph{Bounded pseudo-compact objects}: $\mathcal{T}^b_c\coloneqq \mathcal{T}^-_c\cap\mathcal{T}^b$.
\end{enumerate}

These subcategories are intrinsic, that is, they are independent of the choice of the compact generator. Moreover, $\T^-,\T^+$, $\T^b$ and $\T^{\rm sb}$ are thick subcategories.
If $\T$ is pre-approximable, then $\T^-_c$ and $\T^b_c$ are thick subcategories \cite[Proposition 0.19]{N18a}.
These subcategories have obvious relations
$$\T^{\rm sb}\subseteq \mathcal{T}^-,\;\T^c\subseteq \T^-_c\subseteq \T^-~\text{and}~\T^{b}_c\subseteq \T^-_c.$$
But, in a pre-approximable triangulated category $\T$, the subcategories $\T^b_c$ and $\T^c$ have unfixed relations. We remark that all the four cases below occur: (1) $\T^c\subseteq \T^b_c$ (Example~\ref{exam:approximable ring}), (2) $\T^c=\T^b_c$ (if the global dimension of $\Lambda$ in Example~\ref{exam:approximable ring} is finite), (3) $\T^b_c\subseteq \T^c$ (Section~\ref{subsec:DG algebra}), (4) $\T^c$ and $\T^b_c$ are not comparable (Example~\ref{exam:not comparable}).

\begin{exam}\label{exam:not comparable}
Let $A$ be a DG algebra $k[x, y]/\langle y^2\rangle$ over a field $k$ with $x$ and $y$ in degree $-1$ and with trivial differential. $\D(A)$ is a locally Hom-finite approximable $k$-linear triangulated category. In this case, $A\not\in\D_{fd}(A)=\D(A)^b_c$ and $A/(x, y)\not\in \mathsf{per}(A)=\D(A)^c$. This implies that $\D(A)^b_c$ and $\D(A)^c$ are not comparable.
\end{exam}

The following lemma provides a simple sufficient condition to ensure that $\T^c$ is contained in $\T^b_c$.

\begin{lem}\label{lem:Tc in Tbc}
  Let $\T$ be a pre-approximable triangulated category. If $\T$ admits a compact generator $G$ satisfying $\T(G, G[\ll 0])=0$, then $\T^c\subseteq \T^b_c$. 
\end{lem}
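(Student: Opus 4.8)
The plan is to unwind the definition of $\T^b_c$ directly: since $\T^b_c = \T^b \cap \T^-_c$, we must show that every compact object $C \in \T^c$ lies in both $\T^b$ and $\T^-_c$. Membership in $\T^-_c$ is immediate from the inclusion $\T^c \subseteq \T^-_c$ noted in the text (take the identity triangle $C \to C \to 0$ for every $m$), so the whole content is to prove $\T^c \subseteq \T^b$, i.e. that every compact object is bounded with respect to the $t$-structure $(\T^{\leq 0},\T^{\geq 0})$ generated by $G$. Since $\T^b = \T^- \cap \T^+$, and $\T^c \subseteq \T^- $ already (every compact object sits in some $\T^{\leq n}$ because $G$ is a generator and compactness lets one truncate), the remaining point is $\T^c \subseteq \T^+$.

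First I would reduce to the generator: the subcategory $\T^+ = \bigcup_n \T^{\geq n}$ is thick, and $\T^c$ is the thick closure of $G$, so it suffices to show $G \in \T^+$, i.e. $G \in \T^{\geq m}$ for some $m$. By the description of the $t$-structure, $\T^{\geq 1} = (\overline{\langle G \rangle}^{(-\infty,0]})^{\perp} = (\T^{\leq 0})^{\perp}$, and more generally $\T^{\geq m+1} = (\T^{\leq m})^{\perp} = (\T^{\leq 0}[-m])^{\perp}$. So I need an integer $m$ with $\Hom(\T^{\leq m}, G) = 0$, equivalently $\Hom(X, G) = 0$ for every $X \in \overline{\langle G \rangle}^{(-\infty,-m]} = \overline{\langle G\rangle}^{[m,+\infty)}$ (shifting conventions to match the paper's $\overline{\langle G\rangle}^{[a,b]}$). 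Now use the hypothesis $\Hom(G, G[\ll 0]) = 0$: there is an integer $N$ with $\Hom(G, G[i]) = 0$ for all $i \le N$, equivalently $\Hom(G[-i], G) = 0$ for all $i \ge -N$, i.e. $\Hom(G[j], G) = 0$ for all $j \le N$. Combined with pre-approximability, we also have $\Hom(G, G[i]) = 0$ for $i \gg 0$, so in fact $\Hom(G[j],G) = 0$ for all $j$ outside a finite window; but for the present statement only the lower bound is needed.

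The key step is then a d\'evissage: fix $m$ large enough (to be pinned down), and show by induction on the "length" $n$ that $\Hom(X, G) = 0$ for all $X \in \overline{\langle G\rangle}_n^{[m,+\infty)}$, hence for all $X$ in the union $\overline{\langle G\rangle}^{[m,+\infty)}$. For $n = 1$, $X$ is a direct summand of a coproduct of objects $G[-i]$ with $i \ge m$; since $G$ is compact, $\Hom(\coprod_i G[-i], G) = \prod_i \Hom(G[-i], G) = \prod_i \Hom(G, G[i])$, and each factor vanishes once $m \ge -N$ (so that $i \ge m$ forces $\Hom(G,G[i])=0$ by the chosen $N$), hence $\Hom(X,G) = 0$. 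For the inductive step, $X$ is a summand of an object $Y$ fitting in a triangle $X_1 \to Y \to X_{n-1} \to X_1[1]$ with $X_1 \in \overline{\langle G\rangle}_1^{[m,+\infty)}$ and $X_{n-1} \in \overline{\langle G\rangle}_{n-1}^{[m,+\infty)}$; applying $\Hom(-, G)$ to this triangle and using the induction hypothesis on both ends (noting $X_1[1] \in \overline{\langle G\rangle}_1^{[m-1,+\infty)}$, so one should start the whole argument with $m$ chosen so that the bound survives a shift by $1$ — in fact any $m \ge -N$ works for $X_1$ but the shift needs $m - 1 \ge -N$, so take $m \ge 1 - N$) gives $\Hom(Y, G) = 0$, hence $\Hom(X, G) = 0$. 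This yields $G \in \T^{\geq m+1} \subseteq \T^+$, and therefore $\T^c = \mathsf{thick}(G) \subseteq \T^+$, so $\T^c \subseteq \T^- \cap \T^+ = \T^b$, and finally $\T^c \subseteq \T^b \cap \T^-_c = \T^b_c$.

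The main obstacle is bookkeeping the indexing conventions — converting between $\Hom(G,G[\ll 0])=0$, the $\perp$-description of $\T^{\geq m}$, and the $\overline{\langle G\rangle}^{[a,b]}$ notation — and making sure the chosen threshold $m$ is uniform enough to be stable under the single shift by $[1]$ that appears in each triangle; the d\'evissage itself is routine once compactness is invoked to commute $\Hom(-,G)$ with the coproducts.
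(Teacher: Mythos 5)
Your overall strategy is exactly the paper's: since $\T^b_c=\T^b\cap\T^-_c$ and $\T^c\subseteq\T^-_c\subseteq\T^-$, everything reduces to $\T^c\subseteq\T^+$, and since $\T^+$ is thick and $\T^c=\mathsf{thick}(G)$ it suffices to place $G$ in some $\T^{\geq m}$, i.e.\ to show $\Hom(\T^{\leq m-1},G)=0$; the paper's proof states this last step in one line. The genuine problem is in how you carry out the d\'evissage. You prove the vanishing by induction on $n$ over $\overline{\langle G\rangle}_n^{[\,\cdot\,]}$ and then assert this covers ``the union $\overline{\langle G\rangle}^{[\,\cdot\,]}$''. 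But $\overline{\langle G\rangle}^{[a,b]}$ is by definition the smallest subcategory containing the relevant shifts of $G$ and closed under direct summands, \emph{coproducts} and extensions; it is not in general the union $\bigcup_n\overline{\langle G\rangle}_n^{[a,b]}$, because that union need not be closed under coproducts of objects of unbounded level (the discrepancy between the two is precisely what the approximability axioms measure). So your induction does not reach all of $\T^{\leq m-1}$, and as written it does not yield $G\in\T^{\geq m}$.

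The repair is immediate and makes the induction unnecessary: the full subcategory $\mathcal{C}=\{X\in\T\mid \Hom(X,G)=0\}$ is closed under direct summands, under extensions (apply $\Hom(-,G)$ to a triangle), and under arbitrary coproducts, since $\Hom(\coprod_\alpha X_\alpha,G)\cong\prod_\alpha\Hom(X_\alpha,G)$ holds for any target --- note this has nothing to do with compactness of $G$, which you invoke but do not need (compactness concerns $\Hom$ \emph{out of} $G$). Once $\Hom(G,G[i])=0$ for all $i\leq m-1$ (which the hypothesis supplies as soon as $m-1\leq N$, where $\Hom(G,G[i])=0$ for $i\leq N$), the generators $G[j]$, $j\geq 1-m$, of $\T^{\leq m-1}$ lie in $\mathcal{C}$, and minimality of the aisle gives $\T^{\leq m-1}\subseteq\mathcal{C}$, i.e.\ $G\in\T^{\geq m}\subseteq\T^+$. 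Finally, tighten the signs: statements such as ``$\Hom(G[j],G)=0$ for all $j\le N$'' and ``each factor vanishes once $m\ge -N$, so $i\ge m$ forces $\Hom(G,G[i])=0$'' point the wrong way; the vanishing needed on the generators of the shifted aisle is for the shift parameter bounded \emph{above}, which is exactly what $\Hom(G,G[\ll 0])=0$ provides (pre-approximability is not needed for this lemma).
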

\begin{proof}
Suppose that $\T(G, G[\ll 0])=0$. Since $\T_c^b=\T_c^-\cap \T^b$ and $\T^c\subseteq \T_c^-$, we just need to prove $\T^c\subseteq \T^b$. Since $\T^c\subseteq \T^-$, it suffices to show $\T^c\subseteq \T^+$. By the hypothesis, $G\in \T^+$ and also $\T^c\subseteq \T^+$.
\end{proof}

The following lemma characterizes $\mathcal{T}^{{\rm sb}}$ equivalently and provides a sufficient condition for $\mathcal{T}^{{\rm sb}}\subseteq\mathcal{T}^b$.

\begin{lem}\label{lem-sp=big}
\textnormal{(\cite{ccz25})}
Let $\mathcal{T}$ be a weakly approximable triangulated category with a compact generator $G$. 
Then
\begin{align*}
\mathcal{T}^{{\rm sb}}&=\{Y\in \mathcal{T}^-\mid 
{\mathcal{T}}(Y,\mathcal{T}^-[\gg 0])=0 \}\\
&=\{Y\in \mathcal{T}^-\mid {\mathcal{T}}(Y,\mathcal{T}^{\le n})=0 \text{ for some } n\in \mathbb{Z}\}.
\end{align*}
If, in addition, $\T(G, G[\ll 0])=0$, then
$$\mathcal{T}^{{\rm sb}}=\{Y\in \mathcal{T}^b\mid {\mathcal{T}}(Y, \mathcal{T}^b[\gg0])=0\}\subseteq \mathcal{T}^b.$$
\end{lem}

Let $\T$ be a weakly approximable triangulated category. Suppose that $\T$ admits a compact generator $G$ satisfying $\T(G, G[\ll 0])=0$. Combining Lemma \ref{lem:Tc in Tbc} with Lemma \ref{lem-sp=big}, we obtain $\T^c\subseteq \T^b_c$ and $\mathcal{T}^{{\rm sb}}\subseteq\mathcal{T}^b$. Then the \textit{(small) singularity category} of $\T$ is defined as
$\T_{sg}:=\T^b_c/\T^c$,
while the \textit{big singularity category} of $\T$ is given by
$\T^{\rm big}_{sg}:=\T^b/\T^{\rm sb}$.

\subsubsection{Locally Hom-finite approximable triangulated categories}
In this subsection, we recall some concepts from \cite[Definition 0.1]{N18a}.
Let $\mathcal{T}$ be an $R$-linear triangulated category with coproducts. Denote by $\T^c$ the subcategory consisting of compact objects. An $R$-linear functor $\mathbf{H}: [\T^c]^{\mathsf{op}}\to R\Mod$ is an {\em $\T^c$-cohomological functor} if it takes triangles to long exact sequences. An $\T^c$-cohomological functor $\mathbf{H}$ is called {\em locally finite (finite, respectively) $\T^c$-cohomological} if for any $T\in\T^c$, (1) $\mathbf{H}(T)$ is a finite $R$-module, and (2) $\mathbf{H}(T[i])=0, i\ll 0 (|i|\gg 0, \text{respectively})$.

An $R$-linear triangulated category $\T$ is called {\em locally Hom-finite} if $\T^c$ is Hom-finite, namely the Hom-sets $\T(X, Y)$ are all finite $R$-modules for $X,Y\in \T^c$. In particular, if $\T$ has a compact generator $G$, then $\T$ is locally Hom-finite if $\T(G, G[i])$ is a finite generated $R$-module for each $i\in\mathbb{Z}$.

The following theorem gives sufficient and necessary conditions to characterize objects in the subcategories $\T^-_c$ and $\T^b_c$ in an approximable triangulated category.

\begin{thm}\label{lem:object to bc}
Let $\T$ be a locally Hom-finite approximable $R$-linear triangulated category and $X$ an object in $\T$.
Then
\begin{enumerate}
    \item \textnormal{(\cite[Theorem 7.18]{N18a})} $X\in \T^-_c$ if and only if the functor $\T(-, X)$ is locally finite $\T^c$-cohomological;
    \item \textnormal{(\cite[Theorem 7.20]{N18a})} $X\in \T^b_c$ if and only if the functor $\T(-, X)$ is finite $\T^c$-cohomological.
\end{enumerate}
\end{thm}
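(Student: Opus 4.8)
The plan is to deduce the three equivalences from known results once the formulations are matched up: statement (1) is \cite[Corollary 3.10]{BNP18}, and statements (2) and (3) are \cite[Theorem 7.18]{N18a} and \cite[Theorem 7.20]{N18a}. Before invoking these I would dispose of the elementary ``only if'' directions directly, and isolate the single place where approximability is genuinely needed.

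First I would record the easy halves. Fix a compact generator $G$ with $\Hom(G,G[i])=0$ for $i\geq n_0$ and recall $\T^{\leq m}=\overline{\langle G\rangle}^{(-\infty,m]}$. Since $\T(G,-)$ is cohomological and commutes with coproducts, an induction along the filtration $\overline{\langle G\rangle}_1^{(-\infty,m]}\subseteq\overline{\langle G\rangle}_2^{(-\infty,m]}\subseteq\cdots$ shows that $Y\in\T^{\leq m}$ forces $\T(G,Y[n])=0$ for $n\geq n_0+m$, which is the forward half of (1); and since every $P\in\T^c$ lies in some $\overline{\langle G\rangle}_{n'}^{[a,b]}$, the same computation yields $\T(P,Z[i])=0$ for $i\gg 0$ when $Z\in\T^{\leq m}$, and in particular $\T(P,Z)=0$ once $Z\in\T^{\leq -m}$ with $m$ large (depending on $P$). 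Feeding this into the defining triangles $E_m\to X\to D_m\to E_m[1]$ of an object $X\in\T^-_c$ (with $E_m\in\T^c$, $D_m\in\T^{\leq -m}$) gives $\T(P,X)\cong\T(P,E_m)$ for $m$ large depending on $P$, a finite $R$-module by local Hom-finiteness, and also $\T(P,X[i])=0$ for $i\gg 0$; translating through $\mathbf{H}=\T(-,X)$, this says $\mathbf{H}$ is locally finite $\T^c$-cohomological, the forward half of (2). For (3) I would further use $\T^b_c=\T^b\cap\T^-_c$: an object of $\T^-_c$ is in $\T^b_c$ exactly when it is in $\T^+$, and the forward direction of the $\T^+$-analog of (1) — immediate from the $t$-structure axioms, together with $G\in\T^c$ and the building argument — turns membership in $\T^+$ into $\Hom(P,X[i])=0$ for $i\ll 0$ over all $P\in\T^c$, precisely the extra vanishing that upgrades ``locally finite'' to ``finite''.

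The hard part will be the converses, and here I would simply quote the cited theorems: reconstructing, from the cohomological finiteness of $\T(-,X)$ restricted to $\T^c$, a witness that $X\in\T^-$, $X\in\T^-_c$, or $X\in\T^b_c$ — in the last two cases literally \emph{constructing} the compact approximations $E_m\to X\to D_m$ — is the substance of \cite[Theorem 7.18]{N18a} and \cite[Theorem 7.20]{N18a} (and of \cite[Corollary 3.10]{BNP18} for the boundedness in (1)), and it rests on Neeman's Brown-type representability for approximable triangulated categories and on the compatibility of the preferred $t$-structure with the approximation structure. I do not expect to bypass that machinery, so the proof will amount to aligning the conventions — in particular noting that under $\mathbf{H}=\T(-,X)$ the condition $\mathbf{H}(T[i])=0$ for $i\ll 0$ reads $\Hom(T,X[j])=0$ for $j\gg 0$, and $\mathbf{H}(T[i])=0$ for $i\gg 0$ reads $\Hom(T,X[j])=0$ for $j\ll 0$ — and then citing those results.
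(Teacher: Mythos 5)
Your proposal is correct and matches the paper's treatment: the paper offers no independent proof of this theorem, simply citing \cite[Corollary 3.10]{BNP18} and \cite[Theorems 7.18, 7.20]{N18a}, which is exactly what you do for the substantive converse directions. Your additional verification of the easy ``only if'' halves and of the translation of conventions for $\mathbf{H}=\T(-,X)$ is sound but not something the paper spells out.
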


\subsubsection{Examples}
We present some prototypical examples of (weakly) approximable triangulated categories.
\begin{exam}\textnormal{(\cite[Example 3.1]{N18a})}\label{exam:approximable ring}
The unbounded derived category $\D(\Lambda)$ of a ring $\Lambda$ is approximable and has subcategories
\begin{align*}
\D(\Lambda)^-&=\K^-(\Lambda\Proj)\simeq\D^-(\Lambda), \D(\Lambda)^+=\D^+(\Lambda),\\ \D(\Lambda)^b&=\D^b(\Lambda),
\D(\Lambda)^{\rm sb}=\K^b(\Lambda\Proj),\\
 \D(\Lambda)^-_c&=\D^-(\Lambda\proj), \D(\Lambda)^c=\K^{b}(\Lambda\proj), \D(\Lambda)^b_c=\K^{-,b}(\Lambda\proj).
\end{align*}
Note that the big singularity category of $\mathcal{D}(\Lambda)$ coincides with the 
classical big singularity category of $\Lambda$, which is defined as the quotient category
\[
\mathcal{D}_{sg}^{\text{big}}(\Lambda) \coloneqq \mathcal{D}^b(\Lambda) / \mathcal{K}^b(\Lambda\text{-proj}),
\]
following Beligiannis \cite{Bel2000}.

If $\Lambda$ is coherent, then
$$\D(\Lambda)^b_c=\D^b(\Lambda\smod), \D(\Lambda)^-_c=\D^-(\Lambda\smod).$$
The small singularity category of $\mathcal{D}(\Lambda)$ coincides with the 
classical singularity category of $\Lambda$, defined by
\[
\mathcal{D}_{sg}(\Lambda) \coloneqq \mathcal{D}^b(\Lambda\text{-mod}) / \mathcal{K}^b(\Lambda\text{-proj}),
\]
following Buchsweitz \cite{Buch87}.
\end{exam}

\begin{exam}\label{ex-geometry}\textnormal{(\cite[Lemma 3.5]{N18a}, \cite[Remark 1.6]{N21b}, \cite[Section 8]{N22a}})
Let $X$ be a quasicompact, quasiseparated scheme, and let $Z\subseteq X$ be a closed subset with quasicompact complement. Then the category $\D_{qc,Z}(X)$ is weakly approximable and has subcategories
\begin{align*}
\D_{qc,Z}(X)^-&=\D^-_{qc,Z}(X), \D_{qc,Z}(X)^+=\D^+_{qc,Z}(X),\\
\D_{qc,Z}(X)^b&=\D^b_{qc,Z}(X),
\D_{qc,Z}(X)^c=\D_{Z}^\mathsf{per}(X).
\end{align*}
Note that perfect complexes are bounded. Then $\D_{qc,Z}(X)^c\subseteq \D_{qc,Z}(X)^b_c$ and $\D_{qc,Z}(X)^{\rm sb}\subseteq \D_{qc,Z}(X)^b$.
We refer to the big singularity category of $\D_{qc,Z}(X)$ as the
\emph{big singularity category} of $X$, and denote it by
$\D^{\rm big}_{sg,Z}(X)$.

If $X$ is noetherian, then 
$$\D_{qc,Z}(X)^-_c=\D^-_{coh,Z}(X), \D_{qc,Z}(X)^b_c=\D^b_{coh,Z}(X).$$
The small singularity category of $\D_{qc,Z}(X)$ coincides with the
(classical) singularity category of $X$, which is defined by
\[
\D_{sg,Z}(X)\coloneqq
\D^b_{coh,Z}(X)\big/\D^\mathsf{per}_{Z}(X),
\]
following Orlov~\cite{O11}.
\end{exam}

\begin{exam}\label{exam:not algebraic}
Let $\T$ be the homotopy category of spectra. Then $\T$ is an approximable triangulated category (see \cite[Example 3.2]{N18a} or \cite[Remark 5.4]{N18c}).
This example can be generalized to the homotopy categories of module spectra over connective ring spectra
(see \cite[Appendix A]{BCPRZ24}). In general, this kind of triangulated category is \emph{not} necessarily algebraic.

As usual, for a spectrum $M$, we denote by $\pi_n(M)$ the $n$-th homotopy group of $M$ for each $n\in\mathbb{Z}$.
Let $R$ be an $\mathbb{E}_1$-ring spectrum, that is, an $\mathbb{E}_1$-algebra object in the $\infty$-category of spectra.
Suppose that  $R$ is \emph{connective}, namely, $\pi_n(R)=0$ for any $n<0$.
We consider the homotopy category of the stable $\infty$-category of left $R$-module spectra and denote it by $\D(R)$.
It is known that $\D(R)$ is a compactly generated triangulated category with a compact generator $R$.
Since $\Hom_{\D(R)}(R, R[n])\simeq \pi_{-n}(R)=0$ for $n>0$, $\D(R)$ is approximable. Note that each ordinary ring can be regarded as an $\mathbb{E}_1$-ring by taking its Eilenberg-Mac Lane ring spectrum.

We are particularly interested in coherent $\mathbb{E}_1$-ring spectra. Recall that a connective ring spectrum $R$ is said to be \emph{left coherent} if $\pi_0(R)$ is left coherent as an ordinary ring and $\pi_n(R)$ are finitely presented left $\pi_0(R)$-modules for all $n\in\mathbb{Z}$. Now, let $R$ be a left coherent $\mathbb{E}_1$-ring spectrum. Then $\D(R)^c$ is the smallest full triangulated subcategory of $\D(R)$ containing $R$ and closed under direct summands. Moreover, $\D(R)^-_c$ (respectively, $\D(R)^b_c$) is the full subcategory of $\D(R)$ consisting of objects $M$ such that $\pi_n(M)$ are finitely presented $\pi_0(R)$-modules for  $n\in\mathbb{Z}$ and $\pi_n(M)=0$ for $n\ll 0$ (respectively, $|n|\gg 0$) (\cite[Theorem A.5(1), Corollary A.6(1)]{BCPRZ24}).
\end{exam}

\begin{exam}\textnormal{(\cite[Remark 4.4 (3)]{BV20})}
Let $\T$ be a subcategory of $\D(\mathbb{Z})$ compactly generated by $G=\mathbb{Z}/p\mathbb{Z}$ for a prime number $p$. $\T$ is weakly approximable by Proposition~\ref{prop:approximation gene}, but not approximable since for any positive integer $A$, there is an object $F=\mathbb{Z}/p^{A+1}\mathbb{Z}$ that does not satisfy the third condition in the definition of an approximable triangulated category (Definition~\ref{def:appro}(iii)).
\end{exam}

\subsection{Restriction of functors and recollements}\label{sec:restriction}
In this subsection, we study the restriction of functors and recollements in the framework of approximable triangulated categories. Results here are generalizations of the derived categories of finite-dimensional algebras \cite{AKLY17}.

\begin{lem}\label{lem-gluing-t-struc}
{\rm (\cite[Corollary 3.12]{BNP18})}
Let the following diagram be a recollement of weakly approximable triangulated categories
$$\quad\xymatrix{\R\ar^-{i_*=i_!}[r]
&\T\ar^-{j^!=j^*}[r]\ar^-{i^!}@/^1.2pc/[l]\ar_-{i^*}@/_1.6pc/[l]
&\S.\ar^-{j_*}@/^1.2pc/[l]\ar_-{j_!}@/_1.6pc/[l]}$$
Then the glued $t$-structures of t-structure in the preferred equivalence classes are in the preferred equivalence class.
Moreover, this recollement restricts to a left recollement
$$\quad\xymatrix{\R^-\ar^-{i_*=i_!}[r]
&\T^-\ar^-{j^!=j^*}[r]\ar_-{i^*}@/_1.6pc/[l]
&\S^-,\ar_-{j_!}@/_1.6pc/[l]}$$
and a right recollement
$$\quad\xymatrix{\R^+ \ar^-{i_*=i_!}[r]
&{\T^+} \ar^-{j^!=j^*}[r]\ar^-{i^!}@/^1.3pc/[l]
&{{{\S}^+.}}\ar^-{j_*}@/^1.3pc/[l]
}$$
In particular, $i_*$ can be restricted to $\mathcal{R}^b\ra \mathcal{T}^b$ and $j^*$ can be restricted to $\mathcal{T}^b \ra \mathcal{S}^b$. 
\end{lem}

\begin{lem}\label{lem-sub-eq}
{\rm (\cite[Corollary 3.10]{BNP18})}
Let $\mathcal{T}$ be a weakly approximable triangulated category with a compact generator $G$.
Then
\begin{enumerate}
\item  $\mathcal{T}^-=\{X\in \mathcal{T}\mid {\T}(G[i],X)=0 \text{ for }i\ll 0\}$.

\item $\mathcal{T}^+=\{X\in \mathcal{T}\mid {\T}(G[i],X)=0 \text{ for }i\gg 0\}$.

\item $\mathcal{T}^b=\{X\in \mathcal{T}\mid {\T}(G[i],X)=0 \text{ for }|i|\gg 0\}$.
\end{enumerate}
\end{lem}

\begin{lem}\label{lem:functor to -c1}
    Let $\mathbf{F}:\T\to\S$ be a functor between pre-approximable triangulated categories. If $\mathbf{F}$ respects compact objects and coproducts, then $\mathbf{F}$ can be restricted to $\T^-_c\to\S^-_c$.
\end{lem}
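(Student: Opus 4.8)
The plan is to verify directly that $\mathbf{F}$ sends $\T^-_c$ into $\S^-_c$. Since $\mathbf{F}$ is triangulated and, under pre-approximability, $\T^-_c$ and $\S^-_c$ are thick subcategories (\cite[Proposition 0.19]{N18a}), this inclusion is all that is required to obtain the restricted triangulated functor. Throughout I would fix compact generators $G$ of $\T$ and $H$ of $\S$ and work with the $t$-structures they generate, recalling that the subcategories in question are intrinsic so this costs nothing.

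The argument would go through two preliminary observations. First, since $\mathbf{F}$ respects compact objects, $\mathbf{F}(G)\in\S^c\subseteq\S^-=\bigcup_{k\in\mathbb{Z}}\S^{\leq k}$, so there is an integer $k_0$ with $\mathbf{F}(G)\in\S^{\leq k_0}$. Second --- and this is the key step --- I would show that $\mathbf{F}$ is right $t$-exact up to the shift $k_0$, that is $\mathbf{F}(\T^{\leq -M})\subseteq\S^{\leq k_0-M}$ for all $M$. This follows from the explicit description $\T^{\leq -M}=\overline{\langle G\rangle}^{(-\infty,-M]}$: the full subcategory $\{X\in\T\mid\mathbf{F}(X)\in\S^{\leq k_0-M}\}$ contains every $G[i]$ with $i\geq M$ (because $\mathbf{F}(G)[i]\in\S^{\leq k_0-i}\subseteq\S^{\leq k_0-M}$) and is closed under coproducts, extensions and direct summands (because $\mathbf{F}$ respects coproducts and is triangulated, and $\S^{\leq k_0-M}$ enjoys these closure properties), hence contains all of $\T^{\leq -M}$.

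With these in place the conclusion is a short chase. Given $T\in\T^-_c$ and $m>0$, apply the definition of $\T^-_c$ at the level $M:=m+|k_0|$ to get a triangle $E\to T\to D\to E[1]$ with $E\in\T^c$ and $D\in\T^{\leq -M}$; applying $\mathbf{F}$ produces a triangle $\mathbf{F}(E)\to\mathbf{F}(T)\to\mathbf{F}(D)\to\mathbf{F}(E)[1]$ with $\mathbf{F}(E)\in\S^c$ and $\mathbf{F}(D)\in\S^{\leq k_0-M}\subseteq\S^{\leq -m}$, which witnesses $\mathbf{F}(T)\in\S^-_c$.

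I do not expect a real obstacle here; the only subtle point is the second observation, where one must keep track of the fact that $\mathbf{F}$ is only right $t$-exact up to the fixed shift $k_0$ (the amount by which the compact object $\mathbf{F}(G)$ lies below the heart of $\S$), and must use that the aisle $\S^{\leq 0}=\overline{\langle H\rangle}^{(-\infty,0]}$ --- and hence each shift of it --- is closed under small coproducts, which is not automatic for a general $t$-structure but holds for the one generated by a compact generator.
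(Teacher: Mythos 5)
Your proposal is correct and follows essentially the same route as the paper: bound $\mathbf{F}(G)$ in some $\S^{\leq k_0}$ using that it is compact (the paper does this via $\mathbf{F}(G_t)\in\langle G_s\rangle_n^{[-n,n]}\subseteq\S^{\leq n}$), deduce $\mathbf{F}(\T^{\leq -M})\subseteq\S^{\leq k_0-M}$ from $\T^{\leq 0}=\overline{\langle G\rangle}^{(-\infty,0]}$ together with preservation of coproducts, and then push the defining approximation triangle of $T\in\T^-_c$ through $\mathbf{F}$. Your explicit closure-under-coproducts/extensions/summands argument is just a spelled-out version of the paper's one-line containment $\mathbf{F}(\T^{\leq 0})\subseteq\overline{\langle\mathbf{F}(G_t)\rangle}^{(-\infty,0]}\subseteq\S^{\leq n}$, so there is no substantive difference.
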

\begin{proof}
Let $G_t$ and $G_s$ be compact generators of $\T$ and $\S$, $(\T^{\leq 0}, \T^{\geq 0})$ and $(\S^{\leq 0}, \S^{\geq 0})$ be the $t$-structures generated by $G_t$ and $G_s$, respectively. Since $\mathbf{F}$ respects compact objects, there is a positive integer $n$ such that
$$\mathbf{F}(G_t)\in \langle G_s\rangle_n^{[-n,n]}\subseteq \S^{\leq n},$$
so $\mathbf{F}(G_t[i])\in \S^{\leq n}$ for $i\geq 0$. Note that $\T^{\leq 0}=\overline{\langle G_t\rangle}^{(-\infty, 0]}$ and $\mathbf{F}$ respects coproducts, thus $\mathbf{F}(\T^{\leq 0})\subseteq \overline{\langle \mathbf{F}(G_t)\rangle}^{(-\infty, 0]}\subseteq \S^{\leq n}$.

Let $T\in \T^-_c$.  For any integer $l>0$, let $m=\mathsf{max}\{0, n+l\}$, there is a triangle
$$E\longrightarrow T\longrightarrow D\longrightarrow E[1]$$
with $E\in\T^c$ and $D\in \T^{\leq -m}$. The image of this triangle under $\mathbf{F}$
$$\mathbf{F}(E)\longrightarrow \mathbf{F}(T)\longrightarrow \mathbf{F}(D)\longrightarrow \mathbf{F}(E)[1]$$
satisfies $\mathbf{F}(E)\in\S^c$ and $\mathbf{F}(D)\in \mathbf{F}(\T^{\leq -m})\subseteq \S^{\leq -l}$. Hence $\mathbf{F}(T)\in \S^-_c$.
\end{proof}

\begin{lem}\label{lem:functor to bc}
Let $\T$ and $\S$ be locally Hom-finite approximable $R$-linear triangulated categories.
Let $\mathbf{F}:\T\rightarrow \S$ be a triangulated functor. If $\mathbf{F}$ admits a left adjoint functor that respects compact objects. Then $\mathbf{F}$ can be restricted to $\T^-_c\to \S^-_c$ and $\T^b_c\to \S^b_c$.
\end{lem}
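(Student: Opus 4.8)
Lemma~\ref{lem:functor to bc} complements Lemma~\ref{lem:functor to -c1}: there the hypothesis is imposed on $\mathbf{F}$ itself, whereas here it is imposed on its left adjoint, so that lemma cannot be invoked directly. The plan is to run everything through the cohomological characterizations of $\T^-,\T^-_c$ and $\T^b_c$ supplied by Theorem~\ref{lem:object to bc}. Write $\mathbf{F}_\lambda\colon\S\to\T$ for the left adjoint of $\mathbf{F}$; by hypothesis $\mathbf{F}_\lambda(\S^c)\subseteq\T^c$. As approximable categories are pre-approximable, $\T$ and $\S$ carry compact generators $G_t$ and $G_s$. The only bridge between the two categories we shall use is the natural isomorphism of $R$-modules
\[
\S(S,\mathbf{F}(X))\;\cong\;\T(\mathbf{F}_\lambda(S),X)\qquad(S\in\S,\ X\in\T),
\]
which is compatible with suspension since the functors are triangulated. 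In particular $\mathbf{F}$ need not preserve compact objects (although $\mathbf{F}$ does, in fact, preserve coproducts, because $\mathbf{F}_\lambda$ preserves compactness; we shall not need this).

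First I would treat $\mathbf{F}\colon\T^-\to\S^-$. Let $X\in\T^-$, so by Theorem~\ref{lem:object to bc}(1) there is an integer $N$ with $\T(G_t,X[n])=0$ for $n>N$. The compact object $\mathbf{F}_\lambda(G_s)$ lies in $\overline{\langle G_t\rangle}_k^{[-k,k]}$ for some $k>0$, since a compact object is built from $G_t$ by finitely many suspensions of bounded amplitude, cones and direct summands. Propagating the vanishing through this finite build --- using that $\T(-,X[n])$ turns coproducts in the first variable into products and triangles into long exact sequences, together with $\T(G_t[j],X[n])=\T(G_t,X[n-j])=0$ for $|j|\le k$ and $n>N+k$ --- yields $\T(\mathbf{F}_\lambda(G_s),X[n])=0$ for $n>N+k$. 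By the displayed isomorphism, $\S(G_s,\mathbf{F}(X)[n])=0$ for $n>N+k$, so $\mathbf{F}(X)\in\S^-$ by Theorem~\ref{lem:object to bc}(1) applied in $\S$.

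Next I would treat $\mathbf{F}\colon\T^-_c\to\S^-_c$ and $\mathbf{F}\colon\T^b_c\to\S^b_c$ together. Fix $X\in\T^-_c$ (respectively $X\in\T^b_c$); the $R$-linear functor $\S(-,\mathbf{F}(X))\colon[\S^c]^{\mathsf{op}}\to R\Mod$ is $\S^c$-cohomological, and it remains to check it is locally finite (respectively finite). Given $S\in\S^c$, the object $\mathbf{F}_\lambda(S)$ lies in $\T^c$, so Theorem~\ref{lem:object to bc}(2) (respectively (3)) gives that $\T(\mathbf{F}_\lambda(S),X)$ is a finite $R$-module and that $\T(\mathbf{F}_\lambda(S)[i],X)=0$ for $i\ll0$ (respectively for $|i|\gg0$). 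Transporting these two facts along the $R$-linear isomorphism $\S(S[i],\mathbf{F}(X))\cong\T(\mathbf{F}_\lambda(S)[i],X)$ shows that $\S(-,\mathbf{F}(X))$ is locally finite (respectively finite), whence $\mathbf{F}(X)\in\S^-_c$ (respectively $\mathbf{F}(X)\in\S^b_c$) by Theorem~\ref{lem:object to bc}(2) (respectively (3)).

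The only step requiring any genuine care is the bookkeeping in the $\T^-\to\S^-$ case: one must track how the amplitude $k$ of the finite build of $\mathbf{F}_\lambda(G_s)$ out of $G_t$ enlarges the vanishing range from $\{n>N\}$ to $\{n>N+k\}$. The statements for $\T^-_c$ and $\T^b_c$ are then immediate from the cohomological characterizations, the one operative property of $\mathbf{F}$ being that it is right adjoint to a compactness-preserving functor.
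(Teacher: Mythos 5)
Your proposal is correct and follows essentially the same route as the paper: transport the question along the adjunction isomorphism $\S(S[i],\mathbf{F}(X))\cong\T(\mathbf{F}_\lambda(S)[i],X)$ with $\mathbf{F}_\lambda(S)$ compact, and conclude via the characterizations of $\T^-$, $\T^-_c$ and $\T^b_c$ in Theorem~\ref{lem:object to bc}. The only difference is that you spell out the amplitude bookkeeping for the $\T^-$ case, which the paper dispatches with ``similar proofs.''
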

\begin{proof}
 Let $X\in\T^-_c$ and $S\in\S^c$. Assume $\mathbf{F}$ admits a left adjoint $\mathbf{G}$. Then
 $$\S(S[i], \mathbf{F}(X))\simeq \T(\mathbf{G}(S)[i], X).$$
Since $\mathbf{G}(S)\in\T^c$, by Theorem~\ref{lem:object to bc}(1), $\T(\mathbf{G}(S), X)$ is a finite $R$-module and
$$\T(\mathbf{G}(S)[i], X)=0, i\ll 0.$$
Hence, $\S(S, \mathbf{F}(X))$ is a finite $R$-module and $\S(S[i], \mathbf{F}(X))=0, i\ll 0$, and so $\mathbf{F}(X)$ is in $\S^-_c$.

By using Theorem~\ref{lem:object to bc}(2) and similar proofs as above, one can prove $\mathbf{F}$ can be restricted to $\T^b_c\to \S^b_c$.
\end{proof}

\begin{lem}\label{cor:reco to T-c}
Let the following diagram be a recollement of weakly approximable $R$-linear triangulated categories
$$\xymatrix{\R\ar^-{i_*=i_!}[r]
&\T\ar^-{j^!=j^*}[r]\ar^-{i^!}@/^1.2pc/[l]\ar_-{i^*}@/_1.6pc/[l]
&\S.\ar^-{j_*}@/^1.2pc/[l]\ar_-{j_!}@/_1.6pc/[l]}$$
Let $G_{\T}$ be a compact generator of $\T$.
Then the following statements hold.
\begin{enumerate}
\item If $j^*(G_{\T})\in \S_c^-$, then  the upper two rows of the recollement are restricted to a half recollement
$$\xymatrix{\R^-_c\ar^-{i_*=i_!}[r]
&\T^-_c\ar^-{j^!=j^*}[r]\ar_-{i^*}@/_1.6pc/[l]
&\S^-_c.\ar_-{j_!}@/_1.6pc/[l]}$$
In addition, $i_*$ can be restricted to $\R_c^b\to \T_c^b$ and $j^*$ can be restricted to $\T_c^b\to \S_c^b$.
\item If $\R,\S$ and $\T$ are locally Hom-finite approximable $R$-linear triangulated categories, then the upper two rows of the recollement are restricted to a half recollement
$$\xymatrix{\R^-_c\ar^-{i_*=i_!}[r]
&\T^-_c\ar^-{j^!=j^*}[r]\ar_-{i^*}@/_1.6pc/[l]
&\S^-_c.\ar_-{j_!}@/_1.6pc/[l]}$$
Moreover, if $j^*(G_{\T})\in \S^c(\subseteq \S^-_c)$, then the lower two rows of the recollement restrict to a half recollement
$$\xymatrix{\R^b_c\ar^-{i_*=i_!}[r]
&\T^b_c\ar^-{j^!=j^*}[r]\ar^-{i^!}@/^1.2pc/[l]
&\S^b_c.\ar^-{j_*}@/^1.2pc/[l]}$$
\end{enumerate}
\end{lem}
\begin{proof}
(1) By Lemma \ref{lem:functor to -c1}, $i^*$ can be restricted to $\T_c^-\to \R_c^-$ and $j_!$ can be restricted to $\S_c^-\to \T_c^-$.
By \cite[Proposition 4.3]{BNP18}, if $j^*(G_{\T})\in \S_c^-$, then
\begin{align*}
\T_c^-&=\{Z\in \T\mid i^*(Z)\in \R_c^- \text{ and }j^*(Z)\in \S_c^-\}.
\end{align*}
Thus $j^*$ can be restricted to $\T_c^-\to \S_c^-$. For $X\in \R_c^-$, since $j_!$ is fully faithful, we have $i^*(i_*(X))\simeq X\in \R_c^-$. Also $j^*(i_*(X))=0\in \S_c^-$. Then $i_*(X)\in \T_c^-$, and therefore $i_*$ can be restricted to $\R_c^-\to \T_c^-$. This shows that the upper two rows of the recollement are restricted to a half recollement
$$\xymatrix{\R^-_c\ar^-{i_*=i_!}[r]
&\T^-_c\ar^-{j^!=j^*}[r]\ar_-{i^*}@/_1.6pc/[l]
&\S^-_c.\ar_-{j_!}@/_1.6pc/[l]}$$

By \cite[Proposition 4.3]{BNP18}, if $j^*(G_{\T})\in \S_c^-$, then
\begin{align*}
\T_c^b&=\{Z\in \T\mid i^*(Z)\in \R_c^- ,\; j^*(Z)\in \S_c^b\text{ and }i^!(Z)\in \R^+\}.
\end{align*}
Thus $j^*$ can be restricted to $\T_c^b\to \S_c^b$. For $X\in \R_c^b$, since $i_*$ is fully faithful, $i^*i_*(X)\simeq X\in \R_c^b\subseteq \R_c^-$ and $i^!i_*(X)\simeq X\in \R_c^b\subseteq \R^+$. Also $j^*i_*(X)=0\in \S_c^b$. Thus $i_*(X)\in \T_c^b$ and $i_*$ can be restricted to $\R_c^b\to \T_c^b$.

(2) 
Note that $j^*$ admits a left adjoint functor $j_!$ that respects compact objects. By Lemma \ref{lem:functor to bc}, $j^*$ can be restricted to $\T_c^-\to \S_c^-$. In particular, $j^*(G_{\mathcal{T}})\in \S_c^-$. Then the first half recollement is direct from (1). 

By Lemma \ref{lem-rec-ex-down}, it follows from $j^*(G_{\T})\in \S^c$ that there exists a left recollement:
$$\xymatrix{\R^c\ar^-{i_*=i_!}[r]
&\T^c\ar^-{j^!=j^*}[r]
\ar_-{i^*}@/_1.6pc/[l]
&\S^c.\ar_-{j_!}@/_1.6pc/[l]}$$ 
Then the second half recollement follows directly from Lemma~\ref{lem:functor to bc}.
\end{proof}

\section{Localization theorems for $\T^c$ and $\T^b_c$}\label{sec:loc theorem}

In this section, we are to prove localization theorems for approximable triangulated categories. First, let us recall a remarkable theorem due to Neeman.

\begin{lem}\textnormal{(\cite[Theorem 2.1]{N92})}\label{lem:reco to c}
A recollement of compactly generated triangulated categories
    $$\quad\xymatrix{\R\ar^-{i_*=i_!}[r]
&\T\ar^-{j^!=j^*}[r]\ar^-{i^!}@/^1.2pc/[l]\ar_-{i^*}@/_1.6pc/[l]
&\S.\ar^-{j_*}@/^1.2pc/[l]\ar_-{j_!}@/_1.6pc/[l]}$$
restricts to a short exact sequence up to direct summands along the first row
$${\S}^c\stackrel{j_!}\longrightarrow \T^c\stackrel{i^*}\longrightarrow {\R}^c.$$
\end{lem}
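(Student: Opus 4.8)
The plan is to deduce the statement from Neeman's localization theorem for compactly generated triangulated categories: if $\T$ is compactly generated and $\K$ is the localizing subcategory of $\T$ generated by a \emph{set} of objects that are compact in $\T$, then $\T/\K$ is compactly generated and the canonical functor $\T^c/(\K\cap\T^c)\to(\T/\K)^c$ is an equivalence up to direct summands. Before invoking it I would do the adjunction bookkeeping. Since $i_*=i_!$ is a left adjoint (of $i^!$) it preserves coproducts, so its own left adjoint $i^*$ preserves compactness; likewise $j^!=j^*$ is a left adjoint (of $j_*$), hence preserves coproducts, so its left adjoint $j_!$ preserves compactness. Thus $j_!\colon\S^c\to\T^c$ and $i^*\colon\T^c\to\R^c$ are well defined. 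Moreover $j_!$ is fully faithful on $\S^c$ because it is so on $\S$, and $i^*j_!=0$: applying the triangle $j_!j^!(-)\to(-)\to i_*i^*(-)$ to an object $j_!S$ and using $j^!j_!\cong\id$ gives $i_*i^*j_!S=0$, whence $i^*j_!S=0$ by faithfulness of $i_*$. So $\S^c\stackrel{j_!}{\longrightarrow}\T^c\stackrel{i^*}{\longrightarrow}\R^c$ is at least a candidate short exact sequence.

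Next I would identify the relevant Verdier quotient. Because $i^*$ has a fully faithful right adjoint $i_*$, it exhibits $\R$ as the Verdier quotient $\T/\ker(i^*)$, a standard consequence of the recollement axioms; moreover $\ker(i^*)=\im(j_!)$, the inclusion $\supseteq$ being $i^*j_!=0$ and $\subseteq$ coming from the triangle $j_!j^!T\to T\to i_*i^*T$ in the case $i^*T=0$. Now fix a set $\{Q_\mu\}$ of compact generators of $\S$, which exists since $\S$ is compactly generated. As $j_!$ preserves coproducts, triangles and direct summands and is fully faithful with image the localizing subcategory $\ker(i^*)$, one sees that $\ker(i^*)=\im(j_!)$ equals the localizing subcategory of $\T$ generated by $\{j_!Q_\mu\}$, and each $j_!Q_\mu$ is compact in $\T$. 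Hence $\ker(i^*)$ is generated, as a localizing subcategory of $\T$, by a set of objects compact in $\T$, which is exactly the hypothesis needed to apply the localization theorem.

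Then I would assemble the pieces. The localization theorem yields that $\overline{i^*}\colon\T^c/(\ker(i^*)\cap\T^c)\to\R^c$ is an equivalence up to direct summands, so it suffices to identify $\ker(i^*)\cap\T^c$ with $j_!(\S^c)$. The inclusion $\supseteq$ is immediate; for $\subseteq$, an object of $\ker(i^*)\cap\T^c$ can be written as $j_!S$ since $j_!$ is fully faithful with image $\ker(i^*)$, and $j_!S$ being compact in $\T$ forces $S$ to be compact in $\S$, again because $j_!$ is fully faithful and preserves coproducts. Combining this with the first paragraph shows that $\S^c\stackrel{j_!}{\longrightarrow}\T^c\stackrel{i^*}{\longrightarrow}\R^c$ has all three defining properties of a short exact sequence up to direct summands.

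The step I expect to be the main obstacle is verifying that $\ker(i^*)=\im(j_!)$ is generated, as a localizing subcategory of $\T$, by objects compact \emph{in} $\T$: this is what makes the localization theorem applicable, and it rests on the non-formal input that $j_!$ preserves compactness (which itself uses that $j^!=j^*$, being a left adjoint of $j_*$, preserves coproducts). The remaining steps are formal manipulations with the recollement data together with the cited theorem.
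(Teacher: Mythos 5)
Your proposal is correct, and it is consistent with the paper's treatment: the paper offers no argument for this lemma, simply citing \cite[Theorem 2.1]{N92}, and your proof is precisely the standard bookkeeping that deduces the recollement formulation from that localization theorem (identifying $\R\simeq\T/\kker(i^*)$ with $\kker(i^*)=j_!(\S)$ generated by the compact objects $j_!Q_\mu$, and checking $\kker(i^*)\cap\T^c=j_!(\S^c)$). All the individual steps — compactness preservation by $i^*$ and $j_!$, $i^*j_!=0$, and the identification of the relevant Verdier quotient — are verified correctly.
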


\subsection{A localization theorem for $\T^c$}

\begin{thm}\label{thm:exact sequences of quotient}
Let $\R, \S$, and $\T$ be triangulated categories that form a recollement of triangulated categories
$$\quad\xymatrix{\R\ar^-{i_*=i_!}[r]
&\T\ar^-{j^!=j^*}[r]\ar^-{i^!}@/^1.2pc/[l]\ar_-{i^*}@/_1.6pc/[l]
&\S.\ar^-{j_*}@/^1.2pc/[l]\ar_-{j_!}@/_1.6pc/[l]}$$
Then the following statements hold.
\begin{enumerate}
    \item Suppose that $\R, \S$ and $\T$ are pre-approximable. Let $\U,\V$ and $\W$ be triangulated subcategories of $\R, \S$ and $\T$, respectively, such that ${\R}^c\subseteq\U$, ${\S}^c\subseteq\V\subseteq {\S}^-_c$ and $\T^c\subseteq \W$. If the first row of the recollement is restricted to a short exact sequence
$$\V\stackrel{j_!}{\longrightarrow}\W\stackrel{i^*}{\longrightarrow}\U,$$
then it induces a short exact sequence
$$\V/{\S}^c\stackrel{\overline{j_!}}{\longrightarrow}\W/\T^c\stackrel{\overline{i^*}}{\longrightarrow}\U/{\R}^c.$$
  \item Suppose that $\R,\T, \S$ are weakly approximable and that $j^*(G_{\mathcal{T}})\in \mathcal{S}_c^-$, where $G_{\mathcal{T}}$ denotes a compact generator of $\T$.  Then the first row of the recollement induces a short exact sequence $${\S}^-_c/{\S}^c\stackrel{\overline{j_!}}{\longrightarrow}{\T}^-_c/\T^c\stackrel{\overline{i^*}}{\longrightarrow}{\R}^-_c/{\R}^c.$$
In particular, this holds if $\R,\T, \S$ are locally Hom-finite approximable $R$-linear triangulated categories, with $R$ a commutative noetherian ring.  
\end{enumerate}
\end{thm}

\begin{proof}
(1) There is the following commutative diagram
$$\xymatrix{
{\S}^c\ar[r]^{j_!}\ar@{^(->}[d]&\T^c\ar[r]^{i^*}\ar@{^(->}[d]&{\R}^c\ar@{^(->}[d]
\\
\V\ar[r]^{j_!}\ar[d]&\W\ar[r]^{i^*}\ar[d]&\U\ar[d]
\\
\V/\S^c\ar[r]^{\overline{j_!}}&\W/\T^c\ar[r]^{\overline{i^*}}&\U/\R^c
}$$
in which the first row is exact up to direct summands by Theorem \ref{lem:reco to c} and the second row is exact by the hypothesis. It remains to prove that the third row is exact. By Lemma~\ref{lem:third isomorphism}, it suffices to prove that $\overline{j_!}$ is fully faithful. This leads us to check the sufficient conditions in Lemma~\ref{fully-faithful}.

Note that $\overline{j_!}$ has the following decomposition
$$\V/{\S}^c\stackrel{\tilde{j_!}}{\longrightarrow} j_!(\V)/j_!({\S}^c)\stackrel{i}\longrightarrow \W/\T^c.$$
Since $j_!$ is fully faithful, the functor $\tilde{j_!}$ is an equivalence. To prove the functor $i$ is a full embedding, it suffices to check condition (1) in Lemma~\ref{fully-faithful}. It is easy to check $j_!(\V)\cap \T^c=j_!({\S}^c)$. Let $X\in \T^c$ and $Y=j_!(Y')\in j_!(\V)$ and $f$ be any morphism $X\to Y$. Since $X\in \T^c$, we have ${\T}(X, \T^{\leq -n})=0$ for some $n\in\mathbb{N}$. By the proof of Lemma \ref{lem:functor to -c1}, there exists $m\in\mathbb{N}$ such that $j_!(\S^{\leq -m})\subseteq \T^{\leq -n}$. Since $Y'\in\V\subseteq {\S}^-_c$, there is a triangle
$$E\longrightarrow Y'\longrightarrow F\longrightarrow E[1]$$
with $E\in {\S}^c$ and $F\in \S^{\leq -m}$. Applying the functor $j_!$ to the last triangle and using the fact that ${\T}(X, j_!(F))=0$, then we know that $f$ factors through $j_!(E)$. Hence, it follows from Lemma \ref{fully-faithful}(1) that $i$ is a full embedding. Therefore $\overline{j_!}$ is fully faithful.

(2) Due to Lemma~\ref{cor:reco to T-c} and Lemma \ref{lem-rec-es}(1), the first row of the recollement is restricted to a short exact sequence
$${\S}^-_c\stackrel{j_!}{\longrightarrow}{\T}^-_c\stackrel{i^*}{\longrightarrow}{\R}^-_c.$$
Thus there exists a short exact sequence below by (1)
$${\S}^-_c/{\S}^c\stackrel{\overline{j_!}}{\longrightarrow}{\T}^-_c/\T^c\stackrel{\overline{i^*}}{\longrightarrow}{\R}^-_c/{\R}^c.$$

We finish the proof.
\end{proof}

\begin{cor}\label{cor:for Tc}
Let $R$ be a commutative noetherian ring, let $\R, \S$ and $\T$ be locally Hom-finite approximable $R$-linear triangulated categories and admit a recollement of triangulated categories
$$\quad\xymatrix{\R\ar^-{i_*=i_!}[r]
&\T\ar^-{j^!=j^*}[r]\ar^-{i^!}@/^1.2pc/[l]\ar_-{i^*}@/_1.6pc/[l]
&\S.\ar^-{j_*}@/^1.2pc/[l]\ar_-{j_!}@/_1.6pc/[l]}$$
Suppose that $\T$ has a compact generator $G_{\T}$ such that there is an integer $N$ with $\T(G_{\T}, G_{\T}[n])=0, n<N$, and $j^*(G_{\T})\in \S^c$. 
Then the second row induces a commutative diagram of short exact sequences
$$\xymatrix{
\R^b_c/\R^c\ar[r]^{\overline{i_*}}\ar@{^(->}[d]&\T^b_c/\T^c\ar[r]^{\overline{j^*}}\ar@{^(->}[d]&\S^b_c/\S^c\ar@{^(->}[d]
\\
\R^-_c/\R^c\ar[r]^{\overline{i_*}}&\T^-_c/\T^c\ar[r]^{\overline{j^*}}&\S^-_c/\S^c.
}$$
\end{cor}
\begin{proof}
Since the four functors in the top two rows respect compact objects, $i^*(G_{\T})$ and $j^*(G_{\T})$ are compact generators of $\R$ and $\S$, respectively, and satisfy
$$\T(i^*(G_{\T}), i^*(G_{\T})[\ll 0])\simeq \T(G_{\T}, i_*i^*(G_{\T})[\ll 0])=0$$
and
$$\T(j^*(G_{\T}), j^*(G_{\T})[\ll 0])\simeq \T(j_!j^*(G_{\T}), G_{\T}[\ll 0])=0.$$
It follows from Lemma~\ref{lem:Tc in Tbc} that $\R^b_c, \T^b_c$ and $\S^b_c$ contain
$\R^c, \T^c$ and $\S^c$, respectively. By Lemma~\ref{cor:reco to T-c}(2) and Lemma \ref{lem-rec-es}(1), the second row is restricted to a short exact sequence
$$\R^b_c\stackrel{i_*}{\longrightarrow}\T^b_c\stackrel{j^*}{\longrightarrow}\S^b_c.$$
Hence, the commutative diagram follows from Theorem~\ref{thm:exact sequences of quotient}.
\end{proof}

Compared with \cite[Theorem 6.1]{JYZ23}, the triangulated categories there should be algebraic, i.e. the derived categories of DG categories, but there are examples of approximable triangulated categories which are not algebraic (see Example~\ref{exam:not algebraic}). The proof of \cite[Theorem 6.1]{JYZ23} heavily depends on the properties of the derived categories of DG categories.

The commutative diagram in the corollary above induces a short sequence of Verdier quotient categories
$$\R^-_c/\R^b_c\stackrel{\overline{i_*}}\longrightarrow\T^-_c/\T^b_c\stackrel{\overline{j^*}}\longrightarrow\S^-_c/\S^b_c,$$
it is proved to be exact in the following subsection with an additive assumption of $\R, \T$ and $\S$ being all coherent.

\subsection{A localization theorem for $\T^b_c$}
Let $\T$ be a pre-approximable triangulated category. We say that $\T$ is {\em coherent} (\cite[Definition 5.1]{N18c}) if there exists an integer $N>0$ such that for every object $X\in \T^-_c$ there exists a triangle
$$A\longrightarrow X\longrightarrow B\longrightarrow A[1]$$
with $A\in \T^-_c\cap \T^{\leq 0}$ and $B\in\T^-_c\cap \T^{\geq -N}=\T_c^b\cap \T^{\geq -N}$. This notion is a slight relaxation of the assertion that a $t$-structure in the preferred equivalence class can be restricted to a $t$-structure on $\T^-_c$. The notion of coherence plays a crucial role in the localization
theorem for $\T_c^b$ proved below.

Let $\R, \S$ and $\T$ be triangulated categories. Suppose that there is a recollement
$$(\star)\quad
\xymatrix{\R\ar^-{i_*=i_!}[r]
&\T\ar^-{j^!=j^*}[r]\ar^-{i^!}@/^1.2pc/[l]\ar_-{i^*}@/_1.6pc/[l]
&\S.\ar^-{j_*}@/^1.2pc/[l]\ar_-{j_!}@/_1.6pc/[l]}$$
Throughout this subsection we assume that
$\R,\S$ and $\T$ are weakly approximable and coherent,
and that $j^*(G_{\T})\in \S_c^-$,
where $G_{\T}$ is a compact generator of $\T$.

The key technical ingredient is the following lemma,
which controls morphisms from $i_*(\R_c^-)$ to objects in $\T^+$.
\begin{lem}\label{lem:intersection}
In the recollement $(\star)$, there are
\begin{enumerate}
    \item $\T_c^b\cap i_*(\R_c^-)=i_*(\R^b_c).$
    \item For any $f:X\to Y$ with $X\in i_*(\R_c^-)$ and $Y\in \T^{+}$, $f$ factors through some object in $i_*(\R_c^b)$.
\end{enumerate}

\end{lem}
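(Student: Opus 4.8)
The plan is to exploit the $t$-exactness properties of the recollement functors together with Neeman's characterizations of $\T^-,\T^-_c$ and $\T^b_c$ (Theorem~\ref{lem:object to bc}) and the noetherian hypothesis. For part (1), the inclusion $i_*(\R^b_c)\subseteq \T^b_c\cap i_*(\R)$ is immediate once we know $i_*$ restricts to $\R^b_c\to\T^b_c$, which follows from Lemma~\ref{lem:functor to bc} since $i_*=i_!$ has the left adjoint $i^*$ respecting compact objects. For the reverse inclusion, take $X=i_*(X')\in\T^b_c$ with $X'\in\R$; we must show $X'\in\R^b_c$. Since $i^*$ is left inverse to $i_*$ (up to natural isomorphism), $X'\simeq i^*i_*(X')=i^*(X)$. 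Now $i^*$ respects compacts and coproducts, so by Lemma~\ref{lem:functor to -c1} it sends $\T^-_c$ into $\R^-_c$; thus $X'\in\R^-_c$. It then remains to show $X'\in\R^b$, equivalently (by Theorem~\ref{lem:object to bc}(1) applied in $\R$ and its dual, or directly) that $\R(G_r,X'[n])=0$ for $|n|\gg 0$ where $G_r=i^*(G_t)$ is a compact generator of $\R$. Using adjunction, $\R(i^*(G_t),X'[n])\simeq\T(G_t,i_*(X')[n])=\T(G_t,X[n])$, and since $X\in\T^b_c\subseteq\T^b$ this vanishes for $|n|\gg 0$. Hence $X'\in\R^-_c\cap\R^b=\R^b_c$.

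For part (2), let $f:X\to Y$ with $X=i_*(X')$, $X'\in\R^-_c$, and $Y\in\T^+$. The idea is to use the noetherian property of $\R$ to truncate $X'$ from below into $\R^b_c$, and then argue that the "tail" maps to zero in $Y$ because $Y$ is bounded below. Concretely, since $Y\in\T^+$ there is an integer $m$ with $Y\in\T^{\geq -m}$, i.e.\ $\Hom(\T^{\leq -m-1},Y)=0$ using the $t$-structure axiom. By the noetherian assumption on $\R$ applied to $X'\in\R^-_c$, for a suitable threshold we get a triangle $A'\to X'\to B'\to A'[1]$ with $A'\in\R^-_c\cap\R^{\leq -k}$ and $B'\in\R^b_c$, where $k$ is chosen large enough; applying $i_*$ (which is right $t$-exact, hence $i_*(\R^{\leq -k})\subseteq\T^{\leq -k}$, and here $i_*$ is even $t$-exact) yields a triangle $i_*(A')\to X\to i_*(B')\to i_*(A')[1]$ in $\T$ with $i_*(A')\in\T^{\leq -k}$ and $i_*(B')\in i_*(\R^b_c)$. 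Choosing $k\geq m+1$ forces $\Hom_{\T}(i_*(A'),Y)=0$, so the composite $i_*(A')\to X\xrightarrow{f}Y$ is zero, whence $f$ factors through $X\to i_*(B')$, and $i_*(B')\in i_*(\R^b_c)$ as desired.

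The step I expect to be the main obstacle is getting the truncation threshold in part (2) uniform and correctly placed: one must verify that applying $i_*$ preserves the relevant boundedness, i.e.\ that $i_*$ sends $\R^{\leq -k}$ into $\T^{\leq -k}$ (this uses that, for the glued $t$-structure, $i_*$ is $t$-exact, which is recorded after the definition of the glued $t$-structure), and one must confirm the noetherian triangle from \cite[Definition 5.1]{N18c} can indeed be taken with $A'$ in arbitrarily negative $\R^{\leq -k}$ — this is where the cohomological boundedness of the functor $\T(-,X)$ together with $X'\in\R^-_c$ is used, rather than a single fixed $N$. A minor additional point is checking the easy equality $\T^b_c\cap i_*(\R)=i_*(\R^b_c)$ does not secretly require thickness subtleties; but since $i_*$ is fully faithful and $\R^b_c$ is thick in $\R$ (as $\R$ is pre-approximable), this is routine.
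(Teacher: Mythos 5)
Your proof is correct, and part (2) is essentially the paper's own argument: truncate $X'$ at an arbitrarily negative level using the noetherian condition on $\R$, push the triangle forward along $i_*$, and kill the map out of the negative piece because $Y\in\T^+$; the paper makes two points explicit that you should too, namely that the $t$-exactness of $i_*$ is with respect to the glued $t$-structure, which is only known to lie in the preferred equivalence class (Lemma~\ref{lem:gluing prefer t-struc}), so one gets $i_*(\R^{\leq -m})\subseteq\T^{\leq -n}$ for some $m$ depending on $n$ rather than literally $i_*(\R^{\leq -k})\subseteq\T^{\leq -k}$ (this only shifts your threshold by a constant), and that the truncation triangle at depth $-k$ comes simply from applying the noetherian condition to the shift $X'[-k]$ and shifting back --- it is shift-invariance of the definition, not any cohomological boundedness of $\T(-,X)$, that is used. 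Where you genuinely diverge is part (1): the paper proves $\T^b_c\cap i_*(\R)\subseteq i_*(\R^b_c)$ by transferring the finite $\T^c$-cohomological condition of Theorem~\ref{lem:object to bc} through the adjunction $(i^*,i_*)$ and using that $i^*:\T^c\to\R^c$ is dense up to direct summands (Neeman's theorem, Lemma~\ref{lem:reco to c}), whereas you split the claim into two memberships: $X'\simeq i^*i_*(X')\in\R^-_c$ via Lemma~\ref{lem:functor to -c1} applied to $i^*$, and $X'\in\R^+$ via the vanishing $\R(i^*(G_t),X'[n])\simeq\T(G_t,i_*(X')[n])=0$ for $n\ll 0$ together with the description $\R^{\geq 0}=(\overline{\langle i^*(G_t)\rangle}^{(-\infty,-1]})^{\perp}$ of the $t$-structure generated by the compact generator $i^*(G_t)$. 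Both routes are valid; the paper's is a one-step functorial transfer that reuses the same characterization it needs elsewhere, while yours is slightly more hands-on but avoids invoking the density of $i^*$ on compacts, at the cost of checking that $i^*(G_t)$ generates a $t$-structure in the preferred equivalence class (which holds, since the intrinsic subcategories are independent of the compact generator).
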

\begin{proof}
Let $f: X\to Y$ be a morphism in $\T$ with $X=i_*(X_1)$ for some $X_1\in {\R}_{c}^{-}$ and $Y\in \T^+$. Due to $Y\in \T^{+}$, there exists $n\in\mathbb{N}$ such that $\T(\T^{\leq -n}, Y)=0$. Since $i_*$ is $t$-exact with respect to $t$-structures and their gluing, and the gluing $t$-structure of $t$-structures in the preferred equivalence class belongs to the preferred equivalence class (Lemma~\ref{lem-gluing-t-struc}), then there is an integer $m>0$ such that $i_{*}(\R^{\leq -m})\subseteq \T^{\leq -n}$. Because $\R$ is coherent, then for $X_1$, there is a triangle
$$U_1\stackrel{\alpha}\longrightarrow X_1\stackrel{\beta}\longrightarrow V_1\longrightarrow U_1[1]$$
with $U_1\in \R^{\leq -m}\cap {\R}_{c}^{-}$ and $V_1\in {\R}_{c}^b$. By applying the functor $i_*$ to the triangle above, we have
$$\xymatrix{
i_*(U_1)\ar[r]^-{i_*(\alpha)}&X\ar[d]^f\ar[r]^-{i_*(\beta)}&i_*(V_1)\ar@{-->}[dl]\ar[r]&i_*(U_1)[1]
\\
&Y&&
}$$
since $\T(i_*(U_1), Y)=0$, $f$ factors through $i_*(V_1)$ which belongs to $i_{*}({\R}_{c}^b)$. This completes the proof of (2).

Applying (2) to the identity morphism $f=\mathrm{id}_X$,
we obtain that $i_*(\alpha)=0$. Note that $i_*$ is fully faithful. It follows that $\alpha =0$ and $X_1$ is a summand of $V_1\in {\R}_{c}^b$. Then $X_1\in {\R}_{c}^b$ and $X= i_{*}(X_1)\in i_{*}({\R}_{c}^b)$. Thus $\T_c^b\cap i_*(\R_c^-)\subseteq i_*(\R^b_c)$. By Lemma \ref{cor:reco to T-c}(1), $i_*(\R^b_c) \subseteq \T_c^b$, and therefore $i_*(\R^b_c) \subseteq \T_c^b\cap i_*(\R_c^-)$. Hence, $i_*(\R^b_c) = \T_c^b\cap i_*(\R_c^-)$. This completes the proof of (1).
\end{proof}

Using the above lemma we prove that the second row of the recollement
restricts to $\T_c^b$.
\begin{thm}\label{thm:exact seq Tbc}
 Given the recollement $(\star)$, the following statements hold.
 \begin{enumerate}
  \item The second row in the recollement is restricted to a short exact sequence up to direct summands
$$(\#)\quad\R^b_c\stackrel{i_*}\longrightarrow \T_c^b\stackrel{j^*}\longrightarrow \S^b_c.$$
\item Let $\U,\V$ and $\W$ be triangulated subcategories of $\R, \S$ and $\T$, respectively, such that ${\R}^b_c\subseteq\U\subseteq {\R}^-_c$, ${\S}^b_c\subseteq\V$ and $\T^b_c\subseteq \W$. If the second row of the recollement is restricted to a short exact sequence
$$\U\stackrel{i_*}{\longrightarrow}\W\stackrel{j^*}{\longrightarrow}\V,$$
then it induces a short exact sequence
$$\U/{\R}^b_c\stackrel{\overline{i_*}}{\longrightarrow}\W/\T^b_c\stackrel{\overline{j^*}}{\longrightarrow}\V/{\S}^b_c.$$
\item The second row induces a short exact sequence
$${\R}^-_c/{\R}^b_c\stackrel{\overline{i_*}}{\longrightarrow}\T^-_c/\T^b_c\stackrel{\overline{j^*}}{\longrightarrow}{\S}^-_c/{\S}^b_c.$$
 \end{enumerate}
\end{thm}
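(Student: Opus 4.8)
The plan is to prove statement (1) first, deduce (2) from it via the $3\times 3$-lemma, and then obtain (3) as a special case of (2).

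For (1), I would begin with the routine points: $i_*$ restricts to a fully faithful functor $\R^b_c\to\T^b_c$ and $j^*$ restricts to $\T^b_c\to\S^b_c$ by Lemma~\ref{lem:functor to bc} (applied to $i_*$, which has left adjoint $i^*$, and to $j^*$, which has left adjoint $j_!$; both left adjoints respect compact objects), and $j^*i_*=0$ holds in any recollement. Since $\ker(j^*)$ is the essential image of $i_*$, Lemma~\ref{lem:intersection}(1) identifies $\{T\in\T^b_c\mid j^*T=0\}$ with $\T^b_c\cap i_*(\R)=i_*(\R^b_c)$, so $j^*$ descends to a functor $\overline{j^*}\colon\T^b_c/i_*(\R^b_c)\to\S^b_c$, and everything reduces to showing this is an equivalence up to direct summands. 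For full faithfulness I would use that, by Corollary~\ref{cor:reco to T-c}, the second row already restricts to a short exact sequence $\R^-_c\xrightarrow{i_*}\T^-_c\xrightarrow{j^*}\S^-_c$, i.e.\ $\T^-_c/i_*(\R^-_c)\simeq\S^-_c$. Applying Lemma~\ref{fully-faithful} inside $\T^-_c$ to the subcategories $\T^b_c$ and $i_*(\R^-_c)$ — whose intersection is $i_*(\R^b_c)$ by Lemma~\ref{lem:intersection}(1), and between which every morphism out of $i_*(\R^-_c)$ into an object of $\T^b_c\subseteq\T^+$ factors through $i_*(\R^b_c)$ by Lemma~\ref{lem:intersection}(2) — shows that $\T^b_c/i_*(\R^b_c)\to\T^-_c/i_*(\R^-_c)\simeq\S^-_c$ is fully faithful; as this functor factors through the fully faithful inclusion $\S^b_c\hookrightarrow\S^-_c$, $\overline{j^*}$ is itself fully faithful.

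Density up to direct summands is where the noetherian hypothesis enters, and I expect this to be the main obstacle. Given $S\in\S^b_c$, I would note that $j_!S\in\T^-_c$ (Lemma~\ref{lem:functor to -c1}, since $j_!$ respects compacts and coproducts), fix $N_0$ with $S\in\S^{\geq -N_0}$, and apply the noetherian property of $\T$ to a suitable shift of $j_!S$ to obtain a triangle $A\to j_!S\to B\to A[1]$ with $A\in\T^-_c\cap\T^{\leq -M}$ for some $M\geq N_0+1$ and $B\in\T^b_c$. Applying $j^*$, which is $t$-exact for the glued $t$-structures (in the preferred class by Lemma~\ref{lem:gluing prefer t-struc}), and using $j^*j_!\cong\id$, produces a triangle $j^*A\xrightarrow{u}S\to j^*B\to (j^*A)[1]$ with $j^*A\in\S^{\leq -M}$, so $u$ lies in $\Hom(\S^{\leq -M},\S^{\geq -N_0})=0$; hence the triangle splits and $S$ is a direct summand of $j^*B\in\S^b_c$. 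The delicate points are choosing the shift correctly, checking the $t$-structure Hom-vanishing (absorbing the bounded discrepancy between the glued and canonical $t$-structures into the constant), and ensuring $j_!S$ genuinely lies in $\T^-_c$ so the noetherian property applies. Together with the full faithfulness above this establishes (1).

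For (2), I would form the commutative diagram with rows $\R^b_c\to\T^b_c\to\S^b_c$ (exact up to direct summands by (1)), $\U\to\W\to\V$ (exact by hypothesis), and the induced row $\U/\R^b_c\to\W/\T^b_c\to\V/\S^b_c$; by Lemma~\ref{lem:third isomorphism} the last row is exact once $\overline{i_*}$ is fully faithful. Decomposing $\overline{i_*}$ through the equivalence $\U/\R^b_c\simeq i_*(\U)/i_*(\R^b_c)$, full faithfulness follows from Lemma~\ref{fully-faithful} applied inside $\W$ to $i_*(\U)$ and $\T^b_c$: their intersection equals $i_*(\R^b_c)$ (using $\U\subseteq\R^-_c$ and Lemma~\ref{lem:intersection}(1)), and every morphism from $i_*(\U)\subseteq i_*(\R^-_c)$ to an object of $\T^b_c\subseteq\T^+$ factors through $i_*(\R^b_c)$ by Lemma~\ref{lem:intersection}(2). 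Finally, (3) is the case $\U=\R^-_c$, $\V=\S^-_c$, $\W=\T^-_c$ of (2): the inclusions $\R^b_c\subseteq\R^-_c$, $\S^b_c\subseteq\S^-_c$, $\T^b_c\subseteq\T^-_c$ are immediate, and the hypothesis that the second row restricts to a short exact sequence $\R^-_c\xrightarrow{i_*}\T^-_c\xrightarrow{j^*}\S^-_c$ is precisely the $\T^-_c$-level half recollement provided by Corollary~\ref{cor:reco to T-c}.
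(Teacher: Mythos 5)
Your proposal is correct and follows essentially the same route as the paper: full faithfulness of $\overline{j^*}$ via Lemma~\ref{lem:intersection} and Lemma~\ref{fully-faithful} through the embedding $\T^b_c/i_*(\R^b_c)\to\T^-_c/i_*(\R^-_c)$, density up to direct summands via the noetherian approximation triangle for $j_!S$ and the vanishing $\Hom(\S^{\leq -M},S)=0$, then (2) by Lemma~\ref{lem:third isomorphism} with the same factorization criterion for $\overline{i_*}$, and (3) as the $\U=\R^-_c$, $\V=\S^-_c$, $\W=\T^-_c$ case using Corollary~\ref{cor:reco to T-c}. The only cosmetic difference is that you phrase the Hom-vanishing through $S\in\S^{\geq -N_0}$ rather than choosing $n$ with $\Hom(\S^{\leq -n},S)=0$, which is the same argument.
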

\begin{proof}
It follows directly from Lemma~\ref{cor:reco to T-c}(1) that $i_*$ and $j^*$ in $(\#)$ are well-defined and $i_*$ is fully faithful, and from Lemma~\ref{lem:intersection}(1) that $\T_c^b\cap i_{*}(\R^-_c)=i_{*}(\R^b_c)$. Then $j^*$ induces a triangle functor $\overline{j^*}:{\T^b_c}/{i_*({\R}^b_c)}\to {\S}^b_c$ with $\overline{j^*}(T)={j^*}(T)$ and $\overline{j^*}(f/s)=j^*(f){j^*(s)}^{-1}$. The discussion is displayed in a diagram
$$\xymatrix{
{{\R}^b_c}\ar[r]^{i_*}&{{\T}^b_c}\ar[r]^{\textnormal{cano.}\quad\quad}\ar[d]_{j^*}&{{\T}^b_c}/{i_*({\R}^b_c)}\ar@{-->}[ld]^{\overline{j^*}}
\\
&\S^b_c
}$$

Claim 1: $\overline{j^*}$ is fully faithful.

There is the following commutative diagram
$$\xymatrix{
i_{*}({\R}_{c}^b)\ar@{^(->}[r]\ar@{^(->}[d]&
\T_{c}^{b}\ar^{\textnormal{cano.}\quad\quad}[r]\ar@{^(->}[d]&
\T_{c}^{b}/i_{*}({\R}_{c}^b)\ar_{\mathbf{J}}[d]\\
i_{*}({\R}_{c}^{-})\ar@{^(->}[r]&
\T_{c}^{-}\ar^{\textnormal{cano.}\quad\quad}[r]&
\T_{{c}}^{-}/i_{*}({\R}_{c}^{-}).
}$$
By Lemma~\ref{lem:intersection}(2) and condition (2) in Lemma~\ref{fully-faithful}, the functor  $\mathbf{J}$ is fully faithful.

By Lemma~\ref{cor:reco to T-c}(1), $j^*$ induces a fully faithful functor $\widetilde{j^*}:\T^-_c/ i_*({\R}_c^-)\to {\S}_{c}^{-}$. Then the composition
$$\T_{c}^{b}/i_{*}({\R}_{c}^b)\stackrel{\mathbf{J}}\longrightarrow \T_{c}^{-}/i_{*}({\R}_{c}^{-})\stackrel{\widetilde{j^*}}{\longrightarrow} {\S}_{c}^{-}$$
is a fully faithful functor. Note that there is the following commutative diagram
$$\xymatrix{
\T_{c}^{b}/i_{*}({\R}_{c}^b)\ar[r]^{\quad\quad\overline{j^{*}}}\ar[d]_{\mathbf{J}}&{\S}_{c}^b\ar@{^(->}[d]
\\
\T_{c}^{-}/i_{*}({\R}_{c}^{-})\ar[r]^{\quad\quad\widetilde{j^*}}&{\S}_{c}^{-}
}$$
Hence $\overline{j^{*}}$ is fully faithful.

Claim 2: $\overline{j^{*}}$ is dense up to direct summands.

Let $X\in {\S}_{c}^b$, there exists $n\in\mathbb{N}$ such that $\S(\S^{\leq -n}, X)=0$ and $j_!(X)\in \T_{c}^{-}$.
Because $j^*$ is right $t$-exact with respect to $t$-structures and their gluing, and the gluing $t$-structure of $t$-structures in the preferred equivalence class belongs to the preferred equivalence class  (Lemma~\ref{lem-gluing-t-struc}), then there exists an integer $m>0$ such that $j^*(\T^{\leq -m})\subseteq \S^{\leq -n}$.
Since $\T$ is coherent, then we have a triangle
$$U\stackrel{f}\longrightarrow j_!(X)\longrightarrow V\longrightarrow U[1]$$
with $U\in \T^{\leq -m}\cap \T_{c}^{-}$ and $V\in \T_{c}^{b}$. By applying the functor $j^*$ to the triangle above, there is a triangle
$$j^*(U)\stackrel{j^*(f)}\longrightarrow j^*j_!(X)\longrightarrow j^*(V)\longrightarrow j^*(U)[1].$$
Note that $j^*(U)\in j^*(\T^{\leq -m})\subseteq \S^{\leq -n}$, and so $\S(j^*(U), j^*j_!(X))=0$. This implies that
 $j^{*}(f)=0$, thus $j^*j_!(X)\simeq X$ is a direct summand of $j^{*}(V)$ which belongs to $j^*(\T_c^b)$.

 (2) It is from (1) that there is a commutative diagram
 $$\xymatrix{
{\R}^b_c\ar[r]^{i_*}\ar@{^(->}[d]&\T^b_c\ar[r]^{j^*}\ar@{^(->}[d]&{\S}^b_c\ar@{^(->}[d]
\\
\U\ar[r]^{i_*}\ar[d]&\W\ar[r]^{j^*}\ar[d]&\V\ar[d]
\\
\U/\R^b_c\ar[r]^{\overline{i_*}}&\W/\T^b_c\ar[r]^{\overline{j^*}}&\V/\S^b_c
}$$
in which the first row is exact up to direct summands and the second row is exact, we want to show the third one is exact. Thanks to Lemma~\ref{lem:third isomorphism}, it suffices to prove $\overline{i_*}$ is fully faithful, and this leads us to use Lemma \ref{fully-faithful}(2). It follows from Lemma~\ref{lem:intersection}(1) that $i_*(\U)\cap \T_c^b=i_*(\R^b_c)$. Note that there is the following commutative diagram
$$\xymatrix{
i_*({\R}^b_c)\ar[r]\ar@{^(->}[d]&i_*(\U)\ar[r]\ar@{^(->}[d]&i_*(\U)/i_*({\R^b_c})\ar[d]
\\
\T^b_c\ar[r]&\W\ar[r]&\W/{\T}^b_c.
}$$
Let $f: i_*(Y)\to Z$ be a morphism in $\W$ with $Y\in \U$ and $Z\in \T_c^b$, by Lemma \ref{lem:intersection}(2), it factors through some object in $i_*(\R^b_c)$. Hence, by Lemma~\ref{lem:third isomorphism} the following sequence
$$\U/{\R}^b_c\stackrel{\overline{i_*}}{\longrightarrow}\W/\T^b_c\stackrel{\overline{j^*}}{\longrightarrow}\V/{\S}^b_c$$
is exact.

(3) Thanks to Lemma~\ref{cor:reco to T-c}(1), the second row of the recollement restricts to a short exact sequence
$$\R_c^-\stackrel{i_*}{\longrightarrow}\T_c^-\stackrel{j^*}{\longrightarrow}\S_c^-.$$
Thus there is a short exact sequence
$$\R_c^-/\R_c^b\stackrel{i_*}{\longrightarrow}\T_c^-/\T_c^b\stackrel{j^*}{\longrightarrow}\S_c^-/\S_c^b.$$
\end{proof}

Combining the previous localization results we obtain
the following $3\times 3$ diagram of short exact sequences.
\begin{cor}\label{cor:for Tbc}
With the notation and assumptions in Corollary~\ref{cor:for Tc}. If  moreover, $\R, \T, \S$ are coherent, then the diagram there can be completed into a commutative diagram
$$\xymatrix{
\R^b_c/\R^c\ar[r]^{\overline{i_*}}\ar@{^(->}[d]&\T^b_c/\T^c\ar[r]^{\overline{j^*}}\ar@{^(->}[d]&\S^b_c/\S^c\ar@{^(->}[d]
\\
\R^-_c/\R^c\ar[r]^{\overline{i_*}}\ar[d]&\T^-_c/\T^c\ar[r]^{\overline{j^*}}\ar[d]&\S^-_c/\S^c\ar[d]
\\
\R^-_c/\R^b_c\ar[r]^{\overline{i_*}}&\T^-_c/\T^b_c\ar[r]^{\overline{j^*}}&\S^-_c/\S^b_c}$$
in which all rows and columns are short exact sequences.
\end{cor}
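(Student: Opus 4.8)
The plan is to obtain the $3\times 3$ diagram by attaching the one genuinely new row to the two rows already produced in Corollary~\ref{cor:for Tc}, and to read off the three columns as instances of the ``third isomorphism'' for Verdier quotients. The hypotheses of Corollary~\ref{cor:for Tc}---a ladder of height $2$ of locally Hom-finite approximable $R$-linear triangulated categories together with a compact generator $G$ of $\T$ satisfying $\T(G,G[n])=0$ for $n<N$---are exactly the standing assumptions here, so Corollary~\ref{cor:for Tc} already supplies the top two rows of the diagram as short exact sequences and the commutativity of the two upper squares; its proof also records the inclusions $\R^c\subseteq\R^b_c$, $\T^c\subseteq\T^b_c$, $\S^c\subseteq\S^b_c$ (Lemma~\ref{lem:Tc in Tbc} applied to the compact generators $i^*(G)$, $G$, $j^*(G)$), which is what makes the quotients appearing in the top row legitimate in the first place.

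The first genuinely new step is to produce the bottom row. Since $\R,\T,\S$ are now assumed noetherian---while remaining locally Hom-finite approximable $R$-linear and fitting into the recollement $(**)$---Theorem~\ref{thm:exact seq Tbc}(3) applies and gives the short exact sequence
$$\R^-_c/\R^b_c\xrightarrow{\ \overline{i_*}\ }\T^-_c/\T^b_c\xrightarrow{\ \overline{j^*}\ }\S^-_c/\S^b_c.$$
Next I would supply the columns. For each of $\R$, $\T$, $\S$ one has a chain of thick triangulated subcategories $\R^c\subseteq\R^b_c\subseteq\R^-_c$ (thickness of $\R^b_c$ and $\R^-_c$ uses pre-approximability, which is part of our approximability hypothesis), and the canonical localization functors assemble into a short exact sequence
$$\R^b_c/\R^c\hookrightarrow\R^-_c/\R^c\twoheadrightarrow\R^-_c/\R^b_c$$
(and similarly for $\T$ and $\S$): the first functor is fully faithful because a morphism of $\R^-_c/\R^c$ between objects of $\R^b_c$ is represented by a roof whose middle term, sitting in a triangle with one vertex in $\R^c\subseteq\R^b_c$ and one in $\R^b_c$, again lies in $\R^b_c$; and the universal property of Verdier localization identifies $(\R^-_c/\R^c)/(\R^b_c/\R^c)$ with $\R^-_c/\R^b_c$ (see e.g.\ \cite{N01}). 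These are the three columns.

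It remains to check that the whole diagram commutes. The upper two squares commute by Corollary~\ref{cor:for Tc}. For the lower two, I would observe that every horizontal arrow at every level is the functor induced by the one recollement functor $i_*$ (resp.\ $j^*$) and sends an object $X$ to $i_*(X)$ (resp.\ $j^*(X)$), while every vertical arrow is a canonical Verdier quotient; hence both composites $\R^-_c\to\R^-_c/\R^c\xrightarrow{\overline{i_*}}\T^-_c/\T^c\to\T^-_c/\T^b_c$ and $\R^-_c\to\R^-_c/\R^b_c\xrightarrow{\overline{i_*}}\T^-_c/\T^b_c$ send $X$ to the class of $i_*(X)$, and since a functor out of a Verdier quotient is determined by its composite with the quotient functor, the lower-left square commutes; the lower-right square commutes by the same argument with $j^*$. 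Thus all rows and all columns of the diagram are short exact sequences, as claimed. I do not anticipate a real obstacle, the corollary being an assembly of Corollary~\ref{cor:for Tc} and Theorem~\ref{thm:exact seq Tbc}; the only point needing a moment's care is precisely this last compatibility---that the bottom row handed over by Theorem~\ref{thm:exact seq Tbc}(3) is literally the one obtained by passing the middle row to the further quotients---which is forced because every horizontal functor in sight is a restriction or induction of the fixed functors $i_*$ and $j^*$.
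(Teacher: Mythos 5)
Your proposal is correct and follows essentially the same route as the paper: the top two rows come from Corollary~\ref{cor:for Tc}, the bottom row from Theorem~\ref{thm:exact seq Tbc}(3) using the new noetherian hypothesis, and the columns and commutativity are the standard Verdier-quotient facts that the paper dismisses as obvious. Your extra verification of the column exactness and of the compatibility of the induced functors merely spells out what the paper leaves implicit.
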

\begin{proof}
In this case, the second row can be restricted to short exact sequences
 $$\R^b_c\stackrel{i_*}{\longrightarrow}\T^b_c\stackrel{j^*}{\longrightarrow}\S^b_c$$
 and
 $$\R_c^-\stackrel{i_*}{\longrightarrow}\T_c^-\stackrel{j^*}{\longrightarrow}\S_c^-.$$
By Theorem \ref{thm:exact sequences of quotient} and Theorem \ref{thm:exact seq Tbc}, the three rows are exact. The exactness of the three columns and the commutativity of the diagram are obvious.
\end{proof}


\subsection{A localization theorem for $\mathcal{T}^{{\rm sb}}$}
In this subsection we prove a localization theorem for the
subcategory $\mathcal{T}^{\rm sb}$ associated with a recollement.
The key step is the following factorization lemma.
\begin{lem}\label{lem-l-sp}
Let the following diagram be a recollement of weakly approximable triangulated categories
$$\xymatrix{\R\ar^-{i_*=i_!}[r]
&\T\ar^-{j^!=j^*}[r]\ar^-{i^!}@/^1.2pc/[l]\ar_-{i^*}@/_1.6pc/[l]
&\S.\ar^-{j_*}@/^1.2pc/[l]\ar_-{j_!}@/_1.6pc/[l]}$$
Then
\begin{enumerate}
\item  $j_!$ can be restricted to $\mathcal{S}^{{\rm sb}}\ra \mathcal{T}^{{\rm sb}}$ and $i^*$ can be restricted to $\mathcal{T}^{{\rm sb}}\ra \mathcal{R}^{{\rm sb}}$. 

\item  Every morphism from an object in $\mathcal{T}^{{\rm sb}}$ to an object in $j_!(\mathcal{S}^{-})$ factors through an object in $j_!(\mathcal{S}^{{\rm sb}})$. In particular, if $\mathcal{S}^{{\rm sb}}\subseteq\mathcal{V}\subseteq\mathcal{S}^{-}$, then $j_!(\mathcal{V})\cap\mathcal{T}^{{\rm sb}}=j_!(\mathcal{S}^{{\rm sb}})$.
\end{enumerate}
\end{lem}
\begin{proof}
(1) We first show that $j_!$ restricts to
$\mathcal{S}^{\rm sb}\to\mathcal{T}^{\rm sb}$. Let $Y\in \mathcal{S}^{{\rm sb}}$.
For $Z'\in \mathcal{T}^-$, $j^*(Z')\in \mathcal{S}^-$ by Lemma \ref{lem-gluing-t-struc}. 
Then ${\mathcal{S}}(Y,j^*(Z')[i])=0$ for $i\gg0$ by Lemma \ref{lem-sp=big}.
Then
${\mathcal{T}}(j_!(Y),Z'[i])\simeq {\mathcal{S}}(Y,j^*(Z')[i])=0$ for $i\gg 0$.
By Lemma \ref{lem-sp=big}, we have $j_!(Y)\in \mathcal{T}^{{\rm sb}}$.
Similarly, $i^*$ can be restricted to $\mathcal{S}^{{\rm sb}}\ra \mathcal{R}^{{\rm sb}}$. 

(2) By Lemma \ref{lem-gluing-t-struc}, we can assume that $(\mathcal{T}^{\le 0},\mathcal{T}^{\ge 0})$ is the glued t-structure. Let $X\in\mathcal{T}^{{\rm sb}}$ and $Y\in j_{!}(\mathcal{S}^{-})$. Then there exist some $k \in \mathbb{Z}$ and $Y' \in \mathcal{S}^{\leq k}$ such that $j_!(Y') \simeq Y$. By Definition \ref{def:appro}(ii),  an easy induction, we obtain that for every $m > 0$, there is a canonical triangle\[E_{m}\stackrel{\alpha_{m}}{\lra} Y'\stackrel{\beta_{m}}{\lra} D_{m}\lra E_{m}[1],\]with $E_{m}\in\mathcal{S}^{{\rm sb}}$ and $D_{m}\in\mathcal{S}^{\leq k-m}$. Applying the functor $j_!$ yields a triangle
\[j_!(E_{m}) \stackrel{j_!(\alpha_{m})}{\lra} j_!(Y')\simeq Y\stackrel{j_!(\beta_{m})}{\lra} j_!(D_{m})\lra j_!(E_{m})[1],\]where $j_{!}(E_{m})\in\mathcal{T}^{{\rm sb}}$ by (1) and $j_{!}(D_{m})\in\mathcal{T}^{\leq k-m}$ by the definition of the glued t-structure. Lemma \ref{lem-sp=big} implies that ${\mathcal{T}}(X, j_!(D_m)) = 0$ for $m \gg 0$. Therefore, every morphism in ${\mathcal{T}}(X, Y)$ factors through $j_!(E_m)$ for sufficiently large $m$. 

If $X\simeq Y$, then $j_!(\beta_m)=0$. Since $j_!$ is fully faithful, $\beta_m=0$ and $Y'$ is a summand of $E_m$. Then $Y'\in \mathcal{S}^{\rm sb}$ and $j_!*(Y')\in j_!( \mathcal{S}^{\rm sb})$. Thus $j_!(\mathcal{S}^-)\cap\mathcal{T}^{{\rm sb}}\subseteq j_!(\mathcal{S}^{{\rm sb}})$. Note that $j_!(\mathcal{S}^{{\rm sb}})\subseteq j_!(\mathcal{V})\cap\mathcal{T}^{{\rm sb}}\subseteq j_!(\mathcal{S}^-)\cap\mathcal{T}^{{\rm sb}}$. Hence $j_!(\mathcal{S}^{{\rm sb}})= j_!(\mathcal{V})\cap\mathcal{T}^{{\rm sb}}$.
This completes the proof.
\end{proof}

\begin{thm}\label{thm-l-sp}
Let  $\R, \S$ and $\T$ be weakly approximable triangulated categories admitting a recollement of triangulated categories
$$\quad\xymatrix{\R\ar^-{i_*=i_!}[r]
&\T\ar^-{j^!=j^*}[r]\ar^-{i^!}@/^1.2pc/[l]\ar_-{i^*}@/_1.6pc/[l]
&\S.\ar^-{j_*}@/^1.2pc/[l]\ar_-{j_!}@/_1.6pc/[l]}$$
Then the following statements hold.
\begin{enumerate}
\item The first row of recollement is restricted to a short exact sequence up to direct summands
\begin{align*}
\xymatrixcolsep{2pc}\xymatrix{
{\mathcal{S}}^{{\rm sb}} \ar[r]^{j_!} &\mathcal{T}^{{\rm sb}} \ar[r]^{i^*}  &{\mathcal{R}}^{{\rm sb}}.
}
\end{align*}

\item Let $\mathcal{U}\subseteq \R,\mathcal{V}\subseteq \S$ and $\mathcal{W}\subseteq \T$ be triangulated subcategories satisfying ${\mathcal{R}}^{{\rm sb}} \subseteq\mathcal{U}$, ${\mathcal{S}}^{{\rm sb}}\subseteq\mathcal{V}\subseteq \mathcal{S}^-$ and $\mathcal{T}^{{\rm sb}}\subseteq \mathcal{W}$. 
If the first row of the recollement is restricted to a short exact sequence
\begin{align*}
\xymatrixcolsep{2pc}\xymatrix{
\mathcal{V} \ar[r]^{j_!} &\mathcal{W} \ar[r]^{i^*}  &\mathcal{U},
}
\end{align*}
then it induces a short exact sequence
\begin{align*}
\xymatrixcolsep{2pc}\xymatrix{
\mathcal{V}/\mathcal{S}^{{\rm sb}}\ar[r]^{j_!} &\mathcal{W}/\mathcal{T}^{{\rm sb}} \ar[r]^{i^*}  &\mathcal{U}/\mathcal{R}^{{\rm sb}}.
}
\end{align*}

\item The first row induces a short exact sequence
\begin{align*}
\xymatrixcolsep{2pc}\xymatrix{
\mathcal{S}^-/\mathcal{S}^{{\rm sb}}\ar[r]^{j_!} &\mathcal{T}^-/\mathcal{T}^{{\rm sb}} \ar[r]^{i^*}  &\mathcal{R}^-/\mathcal{R}^{{\rm sb}}.
}
\end{align*}
\end{enumerate}
\end{thm}

\begin{proof}
(1) By Lemma \ref{lem-l-sp}(1), $j_!$ can be restricted to $\mathcal{S}^{{\rm sb}}\ra \mathcal{T}^{{\rm sb}}$ and $i^*$ can be restricted to $\mathcal{T}^{{\rm sb}}\ra \mathcal{R}^{{\rm sb}}$. 
Note that $j_!$ is fully faithful and $i^*j_!=0$.
Then $i^*$ induces a triangle functor $\overline{i^*}:\mathcal{T}^{{\rm sb}}/j_!(\mathcal{S}^{{\rm sb}})\ra \mathcal{R}^{{\rm sb}}$ such that the following diagram is commutative.
\begin{align*}
\xymatrixcolsep{4pc}\xymatrix{
\mathcal{S}^{{\rm sb}} \ar[r]^{j_!} &\mathcal{T}^{{\rm sb}} \ar[d]^{i^*} \ar[r]^{\text{can.}}& \mathcal{T}^{{\rm sb}}/j_!(\mathcal{S}^{{\rm sb}})\ar@{-->}[ld]^{\overline{i^*}} \\
&\mathcal{R}^{{\rm sb}}&
}
\end{align*}

Claim 1: $\overline{i^*}$ is fully faithful.

By Lemma \ref{lem-l-sp}(1), $j_{!}(\mathcal{S}^{{\rm sb}})\subseteq\mathcal{T}^{{\rm sb}}$ .
Note that  $\mathcal{S}^{{\rm sb}}\subseteq\mathcal{S}^{-}$ and $\mathcal{T}^{{\rm sb}}\subseteq\mathcal{T}^{-}$.
Then we have the following commutative diagram:
$$\xymatrix{
j_!(\mathcal{S}^{{\rm sb}})\ar@{^(->}[r] \ar@{^(->}[d]&\mathcal{T}^{{\rm sb}}\ar[r]\ar@{^(->}[d]&\mathcal{T}^{{\rm sb}}/j_!(\mathcal{S}^{{\rm sb}})\ar[d]^{\mathbf{I}}\\
j_!(\mathcal{S}^-)\ar@{^(->}[r]&\mathcal{T}^-\ar[r]&\mathcal{T}^-/j_!(\mathcal{S}^-).
}$$
Combining Lemma \ref{lem-l-sp}(2) with Lemma \ref{fully-faithful}, we conclude that the functor $\mathbf{I}$ is fully faithful.
By Lemma \ref{lem-gluing-t-struc} and Lemma \ref{lem-rec-es}, 
the first row in the recollement is restricted to a short exact sequence
$\xymatrix{
{\mathcal{S}}^-\ar[r]^{j_!} &\mathcal{T}^- \ar[r]^{i^*}  &{\mathcal{R}}^-.
}$
Then $i^*$ induces a fully faithful functor $\widetilde{i^*}:\mathcal{T}^-/j_!(\mathcal{S}^-)\ra \mathcal{R}^-$. 
Hence the composition
$$\xymatrix{
\mathcal{T}^{{\rm sb}}/j_!(\mathcal{S}^{{\rm sb}}) \ar[r]^{\mathbf{I}}&\mathcal{T}^-/j_!(\mathcal{S}^-)\ar[r]^-{\widetilde{i^*}}&\mathcal{R}^-
}$$
is also fully faithful. Now consider the commutative diagram:
$$\xymatrix{
\mathcal{T}^{{\rm sb}}/j_!(\mathcal{S}^{{\rm sb}})\ar[d]^{\mathbf{I}}\ar[r]^-{\overline{i^*}}&\mathcal{R}^{{\rm sb}}\ar@{^(->}[d]\\
\mathcal{T}^-/j_!(\mathcal{S}^-)\ar[r]^-{\widetilde{i^*}}&\mathcal{R}^-.
}$$
It follows that $\overline{i^*}$ is fully faithful.

Claim 2:  $\overline{i^*}$ is dense up to direct summands.

By Lemma \ref{lem-gluing-t-struc}, without loss of generality, we can assume that $(\mathcal{T}^{\leq 0},\mathcal{T}^{\ge 0})$ is the glued t-structure associated to the given $t$-structures on $\mathcal{R}$ and $\mathcal{S}$. For $X\in \mathcal{R}^{{\rm sb}}$, it follows from $\mathcal{R}^{{\rm sb}}\subseteq \mathcal{R}^-$ that $X \in \mathcal{R}^{\leq k}$ for some $k \in \mathbb{Z}$. By the definition of the glued t-structure, $i_{*}(X)\in \mathcal{T}^{\leq k}$. By Defition \ref{def:appro}(ii),  an easy induction, we obtain that for each $m>0$, there is a canonical triangle
$$E_{m} \stackrel{\alpha_{m}}{\lra} i_*(X)\stackrel{\beta_{m}}{\lra} D_{m}\lra E_{m}[1]$$
with $E_{m}\in \mathcal{T}^{{\rm sb}}$ and $D_{m}\in \mathcal{T}^{\le k-m}$.
Applying the functor $i^*$ to the above triangle, we get a triangle
$$i^*(E_{m}) \stackrel{i^*(\alpha_{m})}{\lra} i^*(i_*(X))\stackrel{i^*(\beta_{m})}{\lra} i^*(D_{m})\lra i^{*}(E_{m})[1],$$
where $i^*(E_{m})\in i^{*}(\mathcal{T}^{{\rm sb}})$ and $i^*(D_{m})\in \mathcal{R}^{\le k-m}$ by the definition of the glued t-structure. Note that $X\simeq i^*(i_*(X))$. By Lemma \ref{lem-sp=big}, we have $i^*(\beta_m) = 0$ for $m \gg 0$.
Therefore, $X$ is a direct summand of $i^*(E_m)$ for $m \gg 0$, which completes the proof.

(2) By (1), we have a commutative diagram
$$\xymatrix{
{\mathcal{S}}^{{\rm sb}} \ar[r]^{j_!} \ar@{^(->}[d]&\mathcal{T}^{{\rm sb}} \ar[r]^{i^*}  \ar@{^(->}[d]&{\mathcal{R}}^{{\rm sb}} \ar@{^(->}[d]\\
\mathcal{V} \ar[r]^{j_!} \ar[d]&\mathcal{W} \ar[r]^{i^*}  \ar[d]&\mathcal{U} \ar[d]\\
\mathcal{V}/{\mathcal{S}}^{{\rm sb}}  \ar[r]^{\overline{j_!}} &\mathcal{W}/\mathcal{T}^{{\rm sb}} \ar[r]^{\overline{i^*}} &\mathcal{U}/{\mathcal{R}}^{{\rm sb}},
}$$
where the first row is exact up to direct summands and the second row is exact.
We have to show that the third one is exact. 
By Lemma \ref{lem:third isomorphism}, it suffices to prove that $\overline{j_!}$ is fully faithful.
Thanks to Lemma \ref{lem-l-sp}(2), $j_!(\mathcal{V})\cap\mathcal{T}^{{\rm sb}}=j_!(\mathcal{S}^{{\rm sb}})$.
Then by Lemma \ref{fully-faithful}, we obtain a commutative diagram
$$\xymatrix{
j_!({\mathcal{S}}^{{\rm sb}})\ar@{^(->}[r] \ar@{^(->}[d]
&j_!(\mathcal{V})\ar[r]\ar@{^(->}[d]
&j_!(\mathcal{V})/j_!({\mathcal{S}}^{{\rm sb}})\ar[d]^{\mathbf{J}}\\
\mathcal{T}^{{\rm sb}}\ar@{^(->}[r]
&\mathcal{W}\ar[r]
&\mathcal{W}/\mathcal{T}^{{\rm sb}}
}$$
with $\mathbf{J}$ fully faithful. 
Thus $\overline{j_!}$ is fully faithful.
Hence, by Lemma \ref{lem:third isomorphism}, we get a short exact sequence
$$\xymatrix{
\mathcal{V}/{\mathcal{S}}^{{\rm sb}}  \ar[r]^{\overline{j_!}} &\mathcal{W}/\mathcal{T}^{{\rm sb}} \ar[r]^{\overline{i^*}} &\mathcal{U}/{\mathcal{R}}^{{\rm sb}}.
}$$

(3) By Lemma \ref{lem-gluing-t-struc} and Lemma \ref{lem-rec-es}, the first row induces a short exact sequence
\begin{align*}
\xymatrixcolsep{2pc}\xymatrix{
\mathcal{S}^-\ar[r]^{j_!} &\mathcal{T}^-\ar[r]^{i^*}  &\mathcal{R}^-.
}
\end{align*}
Then we get the desired short exact sequences by (2). 
\end{proof}

\subsection{A localization theorem for $\mathcal{T}^b$}

In this subsection, we establish an analogous localization theorem
for the bounded subcategory $\mathcal{T}^b$.
The following lemma controls morphisms involving the image of $i_*$.
\begin{lem}\label{lem-l-sb}
Let the following diagram be a recollement of weakly approximable triangulated categories
$$\xymatrix{\R\ar^-{i_*=i_!}[r]
&\T\ar^-{j^!=j^*}[r]\ar^-{i^!}@/^1.2pc/[l]\ar_-{i^*}@/_1.6pc/[l]
&\S.\ar^-{j_*}@/^1.2pc/[l]\ar_-{j_!}@/_1.6pc/[l]}$$
Then 
\begin{enumerate}
\item every morphism from an object in $i_*(\mathcal{R}^-)$ to an object in $\mathcal{T}^+$ factors through an object in $i_*(\mathcal{R}^b)$. In particular, if $\mathcal{R}^{b}\subseteq\mathcal{U}\subseteq\mathcal{R}^{-}$, then $i_*(\mathcal{U})\cap \mathcal{T}^b=i_*(\mathcal{R}^b)=i_*(\mathcal{U})\cap \mathcal{T}^{+}$.

\item every morphism from an object in $\mathcal{T}^-$ to an object in $i_*(\mathcal{R}^+)$ factors through an object in $i_*(\mathcal{R}^b)$. In particular, if $\mathcal{R}^{b}\subseteq\mathcal{U}\subseteq\mathcal{R}^{+}$, then $i_*(\mathcal{U})\cap \mathcal{T}^b=i_*(\mathcal{R}^b)=i_*(\mathcal{U})\cap \mathcal{T}^{-}$.
\end{enumerate}
\end{lem}
\begin{proof}

(1) By Lemma \ref{lem-gluing-t-struc}, without loss of generality, we can assume that $(\mathcal{T}^{\leq 0},\mathcal{T}^{\ge 0})$ is the glued t-structure. Let $X\in i_{*}(\mathcal{R}^{-})$ and $Y\in\mathcal{T}^{+}$. Then there exists $X'\in\mathcal{R}^{\leq r}\subseteq \mathcal{R}^-$ for some $r\in\mathbb{Z}$ such that $i_{*}(X')\simeq X$ and there exists an integer $k\in\mathbb{Z}$ such that $Y\in\mathcal{T}^{\geq k}\subseteq \mathcal{T}^+$. By the definition of the glued t-structure, we have $X\simeq i_{*}(X')\in i_{*}(\mathcal{R}^{\leq r})\subseteq \mathcal{T}^{\leq r}$. If $r<k$, then ${\mathcal{T}}(X,Y)=0$, and (1) holds trivially. Now suppose that $r\geq k$. Note that there is a canonical triangle \[X'^{\leq k-1}\xrightarrow{f}X'\to X'^{\geq k}\to X'^{\leq k-1}[1]\]with $X'^{\leq k-1}\in\mathcal{R}^{\leq k-1}$ and $X'^{\geq k}\in\mathcal{R}^{b}$. Applying the functor $i_{*}$ yields a triangle \[i_{*}(X'^{\leq k-1})\xrightarrow{i_{*}(f)}i_{*}(X')\to i_{*}(X'^{\geq k})\to i_{*}(X'^{\leq k-1})[1],\]where $i_{*}(X'^{\leq k-1})\in\mathcal{T}^{\leq k-1}$ by the definition of the glued t-structure and $i_{*}(X'^{\geq k})\in\mathcal{T}^{b}$ by Lemma \ref{lem-gluing-t-struc}. Since ${\mathcal{T}}(i_{*}(X'^{\leq k-1}),Y)=0$, any morphism in ${\mathcal{T}}(i_*(X'), Y)$ must factor through $i_*(X'^{\geq k})$. In particular, if $X\simeq Y$, then $i_*(f)=0$. Since $i_*$ is fully faithful, $f=0$ and $X'$
 is a summand of ${X'}^{\ge k}\in \mathcal{R}^b$.
Then $X'\in \mathcal{R}^b$ and $i_*(X')\in i_*(\mathcal{R}^b)$. Thus $i_*(\mathcal{U})\cap \mathcal{T}^{+}\subseteq i_*(\mathcal{R}^b)$. Clearly, $i_*(\mathcal{R}^b)\subseteq i_*(\mathcal{U})\cap \mathcal{T}^{b}\subseteq i_*(\mathcal{U})\cap \mathcal{T}^{+}$. Hence, $i_*(\mathcal{U})\cap \mathcal{T}^{+}= i_*(\mathcal{R}^b)=i_*(\mathcal{U})\cap \mathcal{T}^{b}$. 

(2) is dual to (1).
\end{proof}

\begin{thm}\label{thm-l-sb}
Let  $\R, \S$ and $\T$ be weakly approximable triangulated categories admitting a recollement of triangulated categories
$$\quad\xymatrix{\R\ar^-{i_*=i_!}[r]
&\T\ar^-{j^!=j^*}[r]\ar^-{i^!}@/^1.2pc/[l]\ar_-{i^*}@/_1.6pc/[l]
&\S.\ar^-{j_*}@/^1.2pc/[l]\ar_-{j_!}@/_1.6pc/[l]}$$
Then the following statements hold.
\begin{enumerate}
\item The second row in the recollement is restricted to a short exact sequence up to direct summands
\begin{align*}
\xymatrixcolsep{2pc}\xymatrix{\mathcal{R}^b \ar[r]^{i_*} &\mathcal{T}^b \ar[r]^{j^*}  &\mathcal{S}^b. 
}
\end{align*}

\item Let $\mathcal{U}\subseteq \R,\mathcal{V}\subseteq \S$ and $\mathcal{W}\subseteq \T$ be triangulated subcategories satisfying ${\mathcal{R}}^b \subseteq\mathcal{U} \subseteq {\mathcal{R}}^-$ (or ${\mathcal{R}}^b \subseteq\mathcal{U} \subseteq {\mathcal{R}}^+$), ${\mathcal{S}}^b\subseteq\mathcal{V}$ and $\mathcal{T}^b\subseteq \mathcal{W}$. 
If the second row of the recollement is restricted to a short exact sequence
\begin{align*}
\xymatrixcolsep{2pc}\xymatrix{
\mathcal{U} \ar[r]^{i_*} &\mathcal{W} \ar[r]^{j^*}  &\mathcal{V},
}
\end{align*}
then it induces a short exact sequence
\begin{align*}
\xymatrixcolsep{2pc}\xymatrix{
\mathcal{U}/\mathcal{R}^b  \ar[r]^{i_*} &\mathcal{W}/\mathcal{T}^b \ar[r]^{j^*}  &\mathcal{V}/\mathcal{S}^b. 
}
\end{align*}

\item  The second row induces short exact sequences
\begin{align*}
\xymatrix{
\mathcal{R}^-/\mathcal{R}^b  \ar[r]^{i_*} &\mathcal{T}^-/\mathcal{T}^b \ar[r]^{j^*}  &\mathcal{S}^-/\mathcal{S}^b,\\
\mathcal{R}^+/\mathcal{R}^b  \ar[r]^{i_*} &\mathcal{T}^+/\mathcal{T}^b \ar[r]^{j^*}  &\mathcal{S}^+/\mathcal{S}^b. 
}
\end{align*}
\end{enumerate}
\end{thm}
\begin{proof}
(1) By Lemma \ref{lem-gluing-t-struc}, $i_*$ can be restricted to $\mathcal{R}^b\ra \mathcal{T}^b$ and $j^*$ can be restricted to $\mathcal{T}^b \ra \mathcal{S}^b$. 
Note that $i_*$ is fully faithful and $j^*i_*=0$. 
Then $j^*: \mathcal{T}^b\ra \mathcal{S}^b$ induces a triangle functor $\overline{j^*}:\mathcal{T}^b/i_*(\mathcal{R}^b)\ra \mathcal{S}^b$ such that the following diagram is commutative.

Claim 1: $\overline{j^*}$ is fully faithful.

Consider the commutative diagram
$$\xymatrix{
i_*(\mathcal{R}^b)\ar@{^(->}[r] \ar@{^(->}[d]&\mathcal{T}^b\ar[r]\ar@{^(->}[d]&\mathcal{T}^b/i_*(\mathcal{R}^b)\ar[d]^{\mathbf{J}}\\
i_*(\mathcal{R}^-)\ar@{^(->}[r]&\mathcal{T}^-\ar[r]&\mathcal{T}^-/i_*(\mathcal{R}^-).
}$$
By combining Lemma \ref{lem-l-sb}(1) with Lemma \ref{fully-faithful}, we see that the functor $\mathbf{J}$ is fully faithful.
By Lemma \ref{lem-gluing-t-struc} and Lemma \ref{lem-rec-es}, 
the second row in the recollement is restricted to a short exact sequence
$\xymatrix{
{\mathcal{R}}^-\ar[r]^{i_*} &\mathcal{T}^- \ar[r]^{j^*}  &{\mathcal{S}}^-.
}$
Then $j^*$ induces a fully faithful functor $\widetilde{j^*}:\mathcal{T}^-/i_*(\mathcal{R}^-)\ra \mathcal{S}^-$. 
Hence the composition
$$\xymatrix{
\mathcal{T}^b/i_*(\mathcal{R}^b) \ar[r]^{\mathbf{J}}&\mathcal{T}^-/i_*(\mathcal{R}^-)\ar[r]^-{\widetilde{j^*}}&\mathcal{S}^-
}$$
is a fully faithful functor.
Note that there is the following commutative diagram
$$\xymatrix{
\mathcal{T}^b/i_*(\mathcal{R}^b)\ar[r]^-{\overline{j^*}}\ar[d]^{\mathbf{J}}&\mathcal{S}^b\ar@{^(->}[d]\\
\mathcal{T}^-/i_*(\mathcal{R}^-)\ar[r]^-{\widetilde{j^*}}&\mathcal{S}^-.
}$$
Thus $\overline{j^*}$ is fully faithful.

Claim 2:  $\overline{j^{*}}$ is dense up to direct summands.

By Lemma \ref{lem-gluing-t-struc}, without loss of generality, we can assume that $(\mathcal{T}^{\leq 0},\mathcal{T}^{\geq 0})$ is the glued t-structure. Let $X\in\mathcal{S}^{b}$. Then there exist $r\leq k$ such that $X\in\mathcal{S}^{\leq k}\cap\mathcal{S}^{\geq r}$. By the definition of the glued t-structure, $j_{!}(X)\in j_{!}(\mathcal{S}^{\leq k})\subseteq \mathcal{T}^{\leq k}$. Then there is a canonical triangle
\[(j_{!}(X))^{\leq r-1}\xrightarrow{f}j_{!}(X)\to(j_{!}(X))^{\geq r}\to(j_{!}(X))^{\leq r-1}[1]\]such that $(j_{!}(X))^{\leq r-1}\in\mathcal{T}^{\leq r-1}$ and $(j_{!}(X))^{\geq r}\in\mathcal{T}^{\geq r}$. 
By Lemma \ref{lem-gluing-t-struc}, $j_!(\mathcal{S}^-)\subseteq \mathcal{T}^-$.
Then $j_!(X)\in \mathcal{T}^-$.
It follows from $(j_{!}(X))^{\leq r-1}\in\mathcal{T}^{\leq r-1}\subseteq \mathcal{T}^-$ that $(j_{!}(X))^{\geq r}\in \mathcal{T}^-$. Thus $(j_{!}(X))^{\geq r}\in\mathcal{T}^b$. 

Applying the functor $j^{*}$ yields a triangle
\[j^{*}((j_{!}(X))^{\leq r-1})\xrightarrow{j^{*}(f)}j^{*}j_{!}(X)\to j^{*}((j_{!}(X))^{\geq r})\to j^{*}((j_{!}(X))^{\leq r-1})[1],\]
where $j^{*}((j_{!}(X))^{\leq r-1})\in j^*(\mathcal{T}^{\leq r-1})\subseteq \mathcal{S}^{\leq r-1}$ by the definition of the glued t-structure. Since $j^{*}j_{!}(X)\simeq X\in\mathcal{S}^{\geq r}$, we have $j^{*}(f)=0$. Therefore, $X$ is a direct summand of $j^{*}(j_{!}(X))^{\geq r})\in j^{*}(\mathcal{T}^b)$, which completes the proof.

(2) By (1), we have a commutative diagram
$$\xymatrix{
\mathcal{R}^b\ar[r]^{i_*}\ar@{^(->}[d]&\mathcal{T}^b\ar[r]^{j^*}\ar@{^(->}[d]&\mathcal{S}^b\ar@{^(->}[d]\\
\mathcal{U}\ar[r]^{i_*}\ar[d]&\mathcal{W}\ar[r]^{j^*}\ar[d]&\mathcal{V}\ar[d]\\
\mathcal{U}/\mathcal{R}^b\ar[r]^{\overline{i_*}}&\mathcal{W}/\mathcal{T}^b\ar[r]^{\overline{j^*}}&\mathcal{V}/\mathcal{S}^b,
}$$
where the first row is exact up to direct summands and the second row is exact.
We have to show that the third one is exact. 
By Lemma \ref{lem:third isomorphism}, it suffices to prove that $\overline{i_*}$ is fully faithful. Consider the commutative diagram
$$\xymatrix{
i_*(\mathcal{R}^b)\ar@{^(->}[r] \ar@{^(->}[d]&i_*(\mathcal{U})\ar[r]\ar@{^(->}[d]&i_*(\mathcal{U})/i_*(\mathcal{R}^b)\ar[d]^{\mathbf{J}}\\
\mathcal{T}^b\ar@{^(->}[r]&\mathcal{W}\ar[r]&\mathcal{W}/\mathcal{T}^b.
}$$Thanks to Lemma \ref{lem-l-sb}, $i_*(\mathcal{U})\cap\mathcal{T}^b=i_*(\mathcal{R}^b)$.
By Lemma \ref{lem-l-sb}, if $\mathcal{U} \subseteq {\mathcal{R}}^-$, then every morphism from an object in $i_*(\mathcal{U})$ to an object in $\mathcal{T}^b$ factors through an object in $i_*(\mathcal{R}^b)$.
Then $\mathbf{J}$ is fully faithful by Lemma \ref{fully-faithful}(1).
(By Lemma \ref{lem-l-sb}, if $\mathcal{U} \subseteq {\mathcal{R}}^+$, then every morphism from an object in $\mathcal{T}^b$ to an object in $i_*(\mathcal{U})$ factors through an object in $i_*(\mathcal{R}^b)$.
Then $\mathbf{J}$ is fully faithful by Lemma \ref{lem:third isomorphism}(2).)
Thus $\overline{i_*}$ is fully faithful.
Hence, by Lemma \ref{lem:third isomorphism}, we get a short exact sequence
\begin{align*}
\xymatrixcolsep{2pc}\xymatrix{
\mathcal{U}/\mathcal{R}^b  \ar[r]^{i_*} &\mathcal{W}/\mathcal{T}^b \ar[r]^{j^*}  &\mathcal{V}/\mathcal{S}^b. 
}
\end{align*}

(3) By Lemma \ref{lem-gluing-t-struc}, the second row induces short exact sequences
\begin{align*}
\xymatrixcolsep{2pc}\xymatrix{\mathcal{R}^- \ar[r]^{i_*} &\mathcal{T}^-\ar[r]^{j^*}  &\mathcal{S}^-,
}\\
\xymatrixcolsep{2pc}\xymatrix{\mathcal{R}^+ \ar[r]^{i_*} &\mathcal{T}^+\ar[r]^{j^*}  &\mathcal{S}^+. 
}
\end{align*}
Then we get the desired short exact sequences by (2). 
\end{proof}

\begin{prop}\label{prop-rec-res}
\textnormal{(\cite{ccz25})}
Let the following diagram be a recollement of weakly approximable triangulated categories
$$\xymatrix{\R\ar^-{i_*=i_!}[r]
&\T\ar^-{j^!=j^*}[r]\ar^-{i^!}@/^1.2pc/[l]\ar_-{i^*}@/_1.6pc/[l]
&\S.\ar^-{j_*}@/^1.2pc/[l]\ar_-{j_!}@/_1.6pc/[l]}$$
Let $G_{\mathcal{R}}$ and $G_{\mathcal{T}}$ be compact generators of $\mathcal{R}$ and $\mathcal{T}$, respectively.
Then the following are equivalent.
\begin{enumerate}
\item[(1) ] $i_{*}(G_{\mathcal{R}})\in \mathcal{T}^{{\rm sb}}$.
\item[($1'$)]  $j^{*}(G_{\mathcal{T}})\in \mathcal{S}^{{\rm sb}}$.

\item[(2) ] The recollement induces a left recollement
$$\xymatrix{\R^{{\rm sb}}\ar^-{i_*=i_!}[r]
&\T^{{\rm sb}}\ar^-{j^!=j^*}[r]\ar_-{i^*}@/_1.6pc/[l]
&\S^{{\rm sb}}.\ar_-{j_!}@/_1.6pc/[l]}$$

\item[(3) ] The recollement induces a recollement
$$\xymatrix{\R^- \ar^-{i_*=i_!}[r]
&\T^- \ar^-{j^!=j^*}[r]\ar^-{i^!}@/^1.2pc/[l]\ar_-{i^*}@/_1.6pc/[l]
&\S^- .\ar^-{j_*}@/^1.2pc/[l]\ar_-{j_!}@/_1.6pc/[l]}$$
\end{enumerate}

Moreover, each of the above conditions implies:
\begin{enumerate}
\item[(4) ]  The recollement induces a right recollement
$$\xymatrix{\R^b\ar^-{i_*=i_!}[r]
&\T^b\ar^-{j^!=j^*}[r]
\ar^-{i^!}@/^1.3pc/[l]
&\S^b.\ar^-{j_*}@/^1.3pc/[l]}$$
\end{enumerate}

If, in addition, ${\mathcal{R}}(G_{\mathcal{R}},G_{\mathcal{R}}[i])={\mathcal{T}}(G_{\mathcal{T}},G_{\mathcal{T}}[i])$ for $i\ll 0$, then {\rm (4)} is also equivalent to {\rm (1)–(3)}.
\end{prop}

\begin{lem}\label{subfinal1}
Let the following diagram be a recollement of weakly approximable triangulated categories
$$\xymatrix{\R\ar^-{i_*=i_!}[r]
&\T\ar^-{j^!=j^*}[r]\ar^-{i^!}@/^1.2pc/[l]\ar_-{i^*}@/_1.6pc/[l]
&\S\ar^-{j_*}@/^1.2pc/[l]\ar_-{j_!}@/_1.6pc/[l]}$$
and let $G_{\mathcal{T}}\in\mathcal{T}$ be a compact generator. Suppose that $j^{*}(G_{\mathcal{T}})\in\mathcal{S}^{{\rm sb}}$. Then the following statements hold.

\begin{enumerate}
\item Every morphism from an object in $\mathcal{T}^{{\rm sb}}$ to an object in $i_{*}(\mathcal{R}^{-})$ factors through an object in $i_{*}(\mathcal{R}^{{\rm sb}})$. In particular, if $\mathcal{R}^{{\rm sb}}\subseteq\mathcal{U}\subseteq\mathcal{R}^{-}$, then $i_{*}(\mathcal{U})\cap\mathcal{T}^{{\rm sb}}=i_{*}(\mathcal{R}^{{\rm sb}})$.

\item Let $\mathcal{U},\mathcal{V}$ and $\mathcal{W}$ be triangulated subcategories of $\mathcal{R}, \mathcal{T}$ and $\mathcal{S}$, respectively. 
Suppose ${\mathcal{R}}^{{\rm sb}} \subseteq\mathcal{U} \subseteq {\mathcal{R}}^-$, $\mathcal{S}^{{\rm sb}}\subseteq \mathcal{V}$ and  ${\mathcal{T}}^{{\rm sb}}\subseteq\mathcal{W}$. 
If the second row of the recollement is restricted to a short exact sequence
\begin{align*}
\xymatrixcolsep{2pc}\xymatrix{
\mathcal{U} \ar[r]^{i_*} &\mathcal{W} \ar[r]^{j^*}  &\mathcal{V},
}
\end{align*}
then the second row of the recollement induces a short exact sequence
\begin{align*}
\xymatrixcolsep{2pc}\xymatrix{
\mathcal{U}/\mathcal{R}^{{\rm sb}}  \ar[r]^{i_*} &\mathcal{W}/\mathcal{T}^{{\rm sb}} \ar[r]^{j^*}  &\mathcal{V}/\mathcal{S}^{{\rm sb}}. 
}
\end{align*}
\end{enumerate}
\end{lem}
\begin{proof}
By Proposition \ref{prop-rec-res} and Lemma \ref{lem-rec-es},
the second row of the recollement induces a short exact sequence
$$\xymatrix{\R^{{\rm sb}}\ar^-{i_*}[r]
&\T^{{\rm sb}}\ar^-{j^!}[r]
&\S^{{\rm sb}}.}$$
The desired conclusions then follow from arguments similar to those used in the proofs of Lemma~\ref{lem-l-sp} and Theorem~\ref{thm-l-sp}.
\end{proof}

\begin{cor}\label{cor-3-3-pb-}
Let the following diagram be a recollement of weakly approximable triangulated categories
$$\xymatrix{\R\ar^-{i_*=i_!}[r]
&\T\ar^-{j^!=j^*}[r]\ar^-{i^!}@/^1.2pc/[l]\ar_-{i^*}@/_1.6pc/[l]
&\S.\ar^-{j_*}@/^1.2pc/[l]\ar_-{j_!}@/_1.6pc/[l]}$$
Suppose that $\T$ has a compact generator $G_{\T}$ such that there is an integer $N$ with $\T(G_{\T}, G_{\T}[n])=0$ for $n<N$, and $j^*(G_{\T})\in \S^{\rm sb}$. Then the recollement induces a commutative diagram
$$\xymatrix{
\mathcal{R}^b/\mathcal{R}^{{\rm sb}}\ar[r]^{\overline{i_*}}\ar@{^(->}[d]&\mathcal{T}^b/\mathcal{T}^{{\rm sb}}\ar[r]^{\overline{j^*}}\ar@{^(->}[d]&\mathcal{S}^b/\mathcal{S}^{{\rm sb}}\ar@{^(->}[d]\\
\mathcal{R}^-/\mathcal{R}^{{\rm sb}}\ar[r]^{\overline{i_*}}\ar[d]&\mathcal{T}^-/\mathcal{T}^{{\rm sb}}\ar[r]^{\overline{j^*}}\ar[d]&\mathcal{S}^-/\mathcal{S}^{{\rm sb}}\ar[d]\\
\mathcal{R}^-/\mathcal{R}^b\ar[r]^{\overline{i_*}}&\mathcal{T}^-/\mathcal{T}^b\ar[r]^{\overline{j^*}}&\mathcal{S}^-/\mathcal{S}^b
}$$
in which all rows and columns are short exact sequences.
\end{cor}
\begin{proof}
Let $G_{\mathcal{R}}$ and $G_{\mathcal{S}}$ be a compact generator of $\mathcal{R}$ and $\mathcal{S}$, respectively. Assume that ${\mathcal{T}}(G_{\mathcal{T}},G_{\mathcal{T}}[i])=0$ for all $i\le n$ for some $n\in \mathbb{Z}$. We claim that ${\mathcal{S}}(G_{\mathcal{S}},G_{\mathcal{S}}[i])=0$ for $i\ll 0$.
Indeed, since $j_!$ can be restricted to $\mathcal{S}^c\ra \mathcal{T}^c$, we have $j_!(G_{\mathcal{S}})\in \mathcal{T}^c$.
Then there are $u\le v\in \mathbb{Z}$ such that $j_!(G_{\mathcal{S}})\in \langle G_{\mathcal{T}}\rangle^{[u,v]}$.
Thus ${\mathcal{T}}(Y_1,Y_2[i])=0$ for $Y_1,Y_2\in \langle G_{\mathcal{T}}\rangle^{[u,v]}$ and $i\le n+u-v$.
Since $j_!$ is fully faithful, we have ${\mathcal{S}}(G_{\mathcal{S}},G_{\mathcal{S}}[i])={\mathcal{T}}(j_!(G_{\mathcal{S}}),j_!(G_{\mathcal{S}})[i])=0$ for all $i\le n+u-v$.

Next, we claim that ${\mathcal{R}}(G_{\mathcal{R}},G_{\mathcal{R}}[i])=0$ for $i\ll 0$. 
Indeed, due to $i_{*}(G_{\mathcal{R}})\in \mathcal{T}^{{\rm sb}}$, there exists $u\ge 0$ such that $i_*(G_{\mathcal{R}})\in \overline{\langle G_{\mathcal{T}}\rangle}^{[-u,u]}$. Since ${\mathcal{T}}(G_{\mathcal{T}},G_{\mathcal{T}}[i])=0$ for all $i\le n$, we have $\overline{\langle G_{\mathcal{T}}\rangle}^{[-u,u]}[i]\subseteq (G_{\mathcal{T}})^{\perp}$ when $i\leq n-u$. Hence $\overline{\langle G_{\mathcal{T}}\rangle}^{[-u,u]}[j]\subseteq G_{\mathcal{T}}[-u,u]^{\perp}$ when $j\leq n-2u$. Fix $Y\in\overline{\langle G_{\mathcal{T}}\rangle}^{[-u,u]}[j]$ for some $j\leq n-2u$. Then $G_{\mathcal{T}}[-u,u]\subseteq {}^{\perp}Y$. Note that ${}^{\perp}Y$ is closed under coproducts and extensions, we conclude that $\overline{\langle G_{\mathcal{T}}\rangle}^{[-u,u]}\subseteq{}^{\perp}Y$. This means, for any $X\in\overline{\langle G_{\mathcal{T}}\rangle}^{[-u,u]}$ and $Y\in\overline{\langle G_{\mathcal{T}}\rangle}^{[-u,u]}[j]$ for some $j\le n-2u$, ${\mathcal{T}}(X,Y)=0$. Since $i_*$ is fully faithful, ${\mathcal{R}}(G_{\mathcal{R}},G_{\mathcal{R}}[i])\simeq {\mathcal{T}}(i_{*}(G_{\mathcal{R}}),i_{*}(G_{\mathcal{R}})[i])=0$ for all $i\le n-2u$.

It follows from Lemma \ref{lem-sp=big} that $\mathcal{R}^b$, $\mathcal{T}^b$ and $\mathcal{S}^b$ contain
$\mathcal{R}^{{\rm sb}}$, $\mathcal{T}^{{\rm sb}}$ and $\mathcal{S}^{{\rm sb}}$, respectively.
By Lemma \ref{lem-gluing-t-struc} and Lemma \ref{lem-rec-es}, the recollement induces short exact sequences
$$\xymatrix{
\mathcal{R}^- \ar[r]^-{i_*} &\mathcal{T}^-\ar[r]^-{j^*}  &\mathcal{S}^-\text{ and } \
\mathcal{R}^b \ar[r]^-{i_*} &\mathcal{T}^b\ar[r]^-{j^*}  &\mathcal{S}^b.
}
$$
Then this corollary follows from Lemma \ref{subfinal1} and Theorem \ref{thm-l-sb}.
\end{proof}

\section{Applications}\label{sec:applications}
In this section we apply the localization theorems established in
Section~\ref{sec:loc theorem} to several important classes of
triangulated categories, including derived categories of algebras, homologically smooth DG algebras, and schemes.

We first consider recollements of derived categories of algebras.
\subsection{Localization sequences in the derived categories of algebras}
Let $R$ be a commutative noetherian ring. An $R$-algebra $A$ is called 
{\em finite} if it is finitely generated as an $R$-module. Suppose that 
$A$ is a finite $R$-algebra. Then $A$ is noetherian, and $\D(A)$ is a 
locally Hom-finite, coherent, approximable $R$-linear triangulated 
category (\cite[Remark 5.4]{N18c}).

\begin{cor}\label{cor:finite-dim alg}
Let the following diagram be a recollement of the derived categories of  $R$-algebras $A, B$ and $C$
$$\xymatrix{\D(B)\ar^-{i_*=i_!}[r]
&\D(A)\ar^-{j^!=j^*}[r]\ar^-{i^!}@/^1.3pc/[l]\ar_-{i^*}@/_1.3pc/[l]
&\D(C).\ar^-{j_*}@/^1.3pc/[l]\ar_-{j_!}@/_1.3pc/[l]}$$
Then the following statements hold. 
\begin{enumerate}
\item The first row of the recollement restricts to a short exact sequence up to direct summands
$$\K^b(C\Proj)\stackrel{j_!}\longrightarrow \K^b(A\Proj)\stackrel{i^*}\longrightarrow \K^b(B\Proj).$$
The second row in the recollement is restricted to a short exact sequence up to direct summands
$$\D^b(B\Mod)\stackrel{i_*}\longrightarrow \D^b(A\Mod)\stackrel{j^*}\longrightarrow \D^b(C\Mod).$$
If, in addition, $i_*(B)\in \K^b(A\Proj)$, then  the second row induces the following commutative diagram
{\xiaowuhao
$$\xymatrix{
\D^b(B\Mod)/\K^b(B\Proj)\ar[r]^{\overline{i_*}}\ar@{^(->}[d]
&\D^b(A\Mod)/\K^b(A\Proj)\ar[r]^{\overline{j^*}}\ar@{^(->}[d]
&\D^b(C\Mod)/\K^b(C\Proj)\ar@{^(->}[d]
\\
\D^-(B\Mod)/\K^b(B\Proj)\ar[r]^{\overline{i_*}}\ar[d]
&\D^-(A\Mod)/\K^b(A\Proj)\ar[r]^{\overline{j^*}}\ar[d]
&\D^-(C\Mod)/\K^b(C\Proj)\ar[d]
\\
\D^-(B\Mod)/\D^b(B\Mod)\ar[r]^{\overline{i_*}}&\D^-(A\Mod)/\D^b(A\Mod)\ar[r]^{\overline{j^*}}&\D^-(C\Mod)/\D^b(C\Mod)
}$$}
in which all rows and columns are short exact sequences.

\item
Assume further that $A$, $B$, and $C$ are finite.
Then the second row in the recollement is restricted to a short exact sequence up to direct summands
$$\D^b(B\smod)\stackrel{i_*}\longrightarrow \D^b(A\smod)\stackrel{j^*}\longrightarrow \D^b(C\smod).$$
If, moreover, $j^*(A)$ is isomorphic in $\D(C\Mod)$ to a bounded complex of finitely generated projective $C$-modules,
then the second row induces the following commutative diagram
{\xiaowuhao
$$\xymatrix{
\D_{\mathrm{sg}}(B)\ar[r]^{\overline{i_*}}\ar@{^(->}[d]&\D_{\mathrm{sg}}(A)\ar[r]^{\overline{j^*}}\ar@{^(->}[d]&\D_{\mathrm{sg}}(C)\ar@{^(->}[d]
\\
\D^-(B\smod)/\K^b(B\proj)\ar[r]^{\overline{i_*}}\ar[d]&\D^-(A\smod)/\K^b(A\proj)\ar[r]^{\overline{j^*}}\ar[d]&\D^-(C\smod)/\K^b(C\proj)\ar[d]
\\
\D^-(B\smod)/\D^b(B\smod)\ar[r]^{\overline{i_*}}&\D^-(A\smod)/\D^b(A\smod)\ar[r]^{\overline{j^*}}&\D^-(C\smod)/\D^b(C\smod)
}$$}
in which all rows and columns are short exact sequences.
\end{enumerate}
\end{cor}
\begin{proof}
(1) By Theorem \ref{thm-l-sp}(1), the sequence
$$\K^b(C\Proj)\stackrel{j_!}\longrightarrow \K^b(A\Proj)\stackrel{i^*}\longrightarrow \K^b(B\Proj)$$
is exact up to direct summands. 
 By Theorem \ref{thm-l-sb}(1), the sequence
$$\D^b(B\Mod)\stackrel{i_*}\longrightarrow \D^b(A\Mod)\stackrel{j^*}\longrightarrow \D^b(C\Mod)$$
is exact up to direct summands.
The claimed commutative diagram then follows directly from Corollary \ref{cor-3-3-pb-}.

(2) By Theorem \ref{thm:exact seq Tbc}(1), the sequence
$$\D^b(B\smod)\stackrel{i_*}\longrightarrow \D^b(A\smod)\stackrel{j^*}\longrightarrow \D^b(C\smod)$$
is exact up to direct summands. The corresponding commutative diagram follows directly from Corollary \ref{cor:for Tbc}.
\end{proof}

The short exact sequence of the singularity categories 
$$\D_{\mathrm{sg}}(B)\stackrel{\overline{i_*}}\longrightarrow \D_{\mathrm{sg}}(A)\stackrel{\overline{j^*}}\longrightarrow \D_{\mathrm{sg}}(C)$$ is also obtained by Jin-Yang-Zhou \cite[Theorem 1.1]{JYZ23} differently by using DG theory. The other short exact sequences seem certainly mathematically correct and interesting, but we have not found any relevant discussions.

\subsection{Localization sequences in the derived categories of DG algebras}\label{subsec:DG algebra}
Let $k$ be a field and let $A$ be a DG $k$-algebra satisfying
\begin{enumerate}
    \item $A$ is homologically smooth, that is $\D_{\mathrm{fd}}(A) \subseteq \operatorname{per}(A)$;
    \item $A^i=0$ for any $i > 0$;
    \item $\mathsf{H}^0(A)$ is finite-dimensional as a $k$-space.
\end{enumerate}
Let $\T=\D(A)$, the condition (2) implies that $\T$ is approximable. Due to \cite[Proposition 2.5]{KY16}, $\mathsf{H}^i(A)$ is finite-dimensional for each $i\in\mathbb{Z}$, then $\T$ is a locally Hom-finite approximable $k$-linear triangulated category. Thanks to Theorem~\ref{lem:object to bc}, we obtain $\T^b_c=\D_{fd}(A)$, where
$$\D_{fd}(A):=\{X\in \D(A)\mid\mathsf{H}^{i}(X)~\text{is finite-dimensional and}~\mathsf{H}^{i}(X)=0~\text{for}~|i|\gg 0\}.$$
The AGK category {(\cite{A09, G10})} of $A$ is defined as
$$\D_{\mathsf{agk}}(A):=\operatorname{per}(A)/\D_{fd}(A)=\T^c/\T^b_c.$$

\begin{prop}\label{prop:DG algebra}
Let $A$ be a DG $k$-algebra satisfying (1)$\sim$(3) above. Then $\D(A)$ is a locally Hom-finite coherent approximable $k$-linear triangulated category.
\end{prop}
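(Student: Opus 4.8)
Most of the asserted properties are in effect already recorded in the discussion immediately preceding the statement, so the plan is to isolate them in one stroke and then devote the proof to the one genuinely new point, noetherianness. First I would recall why $\T:=\D(A)$ is approximable: condition (2) gives $\mathsf{H}^i(A)=0$ for $i>0$, hence $\Hom_{\D(A)}(A,A[i])\cong\mathsf{H}^i(A)=0$ for $i\geq 1$, and since $A$ is a compact generator $\T$ is approximable by \cite[Remark 3.3]{N18a}. By \cite[Proposition 2.5]{KY16}, conditions (1)--(3) force every $\mathsf{H}^i(A)$ to be finite-dimensional over $k$, so the compact generator $A$ has $\Hom_{\D(A)}(A,A[i])\cong\mathsf{H}^i(A)$ finite-dimensional for all $i$, whence $\T$ is locally Hom-finite; and Theorem~\ref{lem:object to bc} yields $\T^b_c=\D_{fd}(A)$. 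Thus it remains to prove that $\T$ is noetherian.

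The second step is to pin down, concretely, a $t$-structure in the preferred equivalence class together with $\T^-_c$ and $\T^b_c$. Because $A^i=0$ for $i>0$ and $\Hom_{\D(A)}(A[j],X)\cong\mathsf{H}^{-j}(X)$, the coaisle of the $t$-structure generated by $A$ is the right orthogonal of $\{A[j]\mid j\geq 1\}$, which is exactly $\{X\mid\mathsf{H}^i(X)=0\text{ for }i<0\}$; by uniqueness of the aisle given the coaisle, the generated $t$-structure is the standard cohomological one, $\T^{\leq 0}=\{X\mid\mathsf{H}^i(X)=0\text{ for }i>0\}$ and $\T^{\geq 0}=\{X\mid\mathsf{H}^i(X)=0\text{ for }i<0\}$, and in particular it lies in the preferred equivalence class. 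Next, using Theorem~\ref{lem:object to bc}(2), local Hom-finiteness, and $\T^c=\per(A)=\mathsf{thick}(A)$ (so the locally-finite cohomological condition may be tested on the shifts $A[j]$ and propagated along the thick closure), one sees that $X\in\T^-_c$ if and only if each $\mathsf{H}^i(X)$ is finite-dimensional and $\mathsf{H}^i(X)=0$ for $i\gg 0$; consequently $\T^b_c=\T^b\cap\T^-_c=\D_{fd}(A)$, consistent with the above.

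With these descriptions in hand the noetherian condition is routine, with $N=1$. Given $X\in\T^-_c$, choose $m$ with $\mathsf{H}^i(X)=0$ for $i>m$ and take the standard truncation triangle
$$\tau^{\leq -1}X\longrightarrow X\longrightarrow\tau^{\geq 0}X\longrightarrow(\tau^{\leq -1}X)[1].$$
Then $\tau^{\leq -1}X$ has cohomology $\mathsf{H}^i(X)$ in degrees $i\leq -1$ and $0$ elsewhere, so it is finite-dimensional in each degree and vanishes for $i\gg 0$, giving $\tau^{\leq -1}X\in\T^-_c\cap\T^{\leq -1}\subseteq\T^-_c\cap\T^{\leq 0}$; while $\tau^{\geq 0}X$ has cohomology $\mathsf{H}^i(X)$ in degrees $0\leq i\leq m$ and $0$ elsewhere, so $\tau^{\geq 0}X\in\D_{fd}(A)=\T^b_c$ and $\tau^{\geq 0}X\in\T^{\geq 0}\subseteq\T^{\geq -1}$. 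This is a triangle of exactly the shape demanded by the definition of noetherian, with the same constant for every $X$, so $\T$ is noetherian and the proof is complete.

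There is no real obstacle here: the only step that is not pure bookkeeping is the identification of the $t$-structure generated by $A$ with the cohomological one, which is the short orthogonality computation above (classical for ordinary rings, standard for connective DG algebras). Even if one preferred to invoke only that the generated $t$-structure is \emph{equivalent} to the cohomological one rather than equal, the same truncation argument goes through after shifting the truncation degree by a fixed amount and enlarging $N$ accordingly — which is precisely the slack that the constant $N$ in the definition of noetherian is meant to absorb.
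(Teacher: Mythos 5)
Your proof is correct, but it reaches the crucial inclusion $\D^-_{fd}(A)\subseteq\D(A)^-_c$ by a different mechanism than the paper. The paper's proof of Proposition~\ref{prop:DG algebra} argues constructively: it truncates $X\in\D^-_{fd}(A)$, uses homological smoothness to place the bounded piece in $\per(A)$ via $\D_{fd}(A)\subseteq\per(A)$, splits that piece along the co-$t$-structure $({\langle A\rangle}^{[0,+\infty)},{\langle A\rangle}^{(-\infty,0]})$ of $\per(A)$, and assembles the required compact approximation with an octahedron. You instead invoke the characterization of $\T^-_c$ by locally finite $\T^c$-cohomological functors (Theorem~\ref{lem:object to bc}(2)), test the conditions on the compact generator $A$, where $\T(A,X[j])\cong\mathsf{H}^j(X)$, and propagate them over $\T^c=\mathsf{thick}(A)$ by the long exact sequence; this is legitimate and yields $\T^-_c=\D^-_{fd}(A)$ directly, after which both proofs finish with the same standard-truncation argument for noetherianness. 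What your route buys is brevity and the observation that smoothness plays no role in the identification $\D(A)^-_c=\D^-_{fd}(A)$ or in noetherianness itself (it enters only through the citation of \cite[Proposition 2.5]{KY16} for local Hom-finiteness, and later when one needs $\D_{fd}(A)\subseteq\per(A)$ to form $\D_{\mathsf{agk}}(A)$); what the paper's route buys is an explicit perfect approximation that leans less heavily on Neeman's representability-type theorem for this step, although the paper uses that theorem anyway for $\T^b_c=\D_{fd}(A)$ and for the easy inclusion. Your explicit verification that the $t$-structure generated by $A$ coincides with the standard one (and your remark that equivalence of $t$-structures would suffice, absorbed by the constant $N$) makes precise a point the paper leaves implicit.
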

\begin{proof}
 It is known that $\D(A)$ is a locally Hom-finite approximable $k$-linear triangulated category. We only need to check that it is coherent. Note that
 the standard $t$-structure can be restricted to $\D_{fd}^{-}(A)$, where
 $$\D_{fd}^{-}(A):=\{X\in \D(A)\mid\mathsf{H}^{i}(X)~\text{is finite-dimensional and}~\mathsf{H}^{i}(X)=0~\text{for}~i\gg 0\}.$$
So it suffices to prove that $\D(A)_c^{-}=\D_{fd}^{-}(A)$.
The inclusion $\D(A)_c^{-}\subseteq D_{fd}^{-}(A)$ is direct from their definitions and the characterization (Theorem~\ref{lem:object to bc}(1)) of $\D(A)_c^{-}$.

Let $X\in \D_{fd}^{-}(A)$, for any $m\in\mathbb{N}$, there exists a triangle for $X$
$$X_{1}\longrightarrow X\longrightarrow X_{2}\longrightarrow X_1[1]$$
with $X_{1}\in \D(A)^{\leq -m}$ and $X_2\in \D(A)^{\geq -m+1}$. Since $X\in \D_{fd}^{-}(A)$, then $X_2\in \D_{fd}(A)\subseteq {\rm per}(A)$. For $X_2$ there is the canonical triangle with respect to the co-$t$-structure $({\langle A\rangle}^{[0,+\infty)}, {\langle A\rangle}^{(-\infty,0]})$ in per$(A)$ (see \cite[Proposition 2.3]{KY16})
$$U\longrightarrow X_2\longrightarrow V\longrightarrow U[1]$$
with $U\in {\langle A\rangle}^{[-m,+\infty)}$ and $V\in {\langle A\rangle}^{(-\infty,-m-1]}$. Note that $\Hom_{\D(A)}(U,X_1[1])=0$. Then there is the following octahedral diagram
$$\xymatrix{
&&U\ar[d]\ar@{-->}[dl]&
\\
X_1\ar[r]\ar@{-->}[d]&X\ar[r]\ar@{-->}[dl]&X_2\ar[r]\ar[d]&X_1[1]
\\
V'\ar@{-->}[rr]&&V&
}$$
this produces a triangle
$$U\longrightarrow X\longrightarrow V'\longrightarrow U[1]$$
with $U\in$ per$(A)$ and $V'\in \D(A)^{\leq -m}$. Thus $X\in \D(A)_c^{-}$. We finish the proof.
\end{proof}

\begin{cor}
Let $A, B$ and $C$ be DG $k$-algebras satisfying (1)$\sim$(3) above. If there is a recollement of the derived categories
$$\xymatrix{\D(B)\ar^-{i_*=i_!}[r]
&\D(A)\ar^-{j^!=j^*}[r]\ar^-{i^!}@/^1.3pc/[l]\ar_-{i^*}@/_1.3pc/[l]
&\D(C).\ar^-{j_*}@/^1.3pc/[l]\ar_-{j_!}@/_1.3pc/[l]}$$
Then the second row in the recollement is restricted to a short exact sequence up to direct summands
$$\D_{fd}(B)\stackrel{i_*}\longrightarrow \D_{fd}(A)\stackrel{j^*}\longrightarrow \D_{fd}(C).$$
Moreover,  if $j^*(A)\in \D^c(C\Mod)$, then the second row induces the following commutative diagram
{\xiaowuhao
$$\xymatrix{
\D_{\mathrm{agk}}(B)\ar[r]^{\overline{i_*}}\ar[d]&\D_{\mathrm{agk}}(A)\ar[r]^{\overline{j^*}}\ar[d]&\D_{\mathrm{agk}}(C)\ar[d]
\\
\D^-_{fd}(B)/\D_{fd}(B)\ar[r]^{\overline{i_*}}\ar[d]&\D^-_{fd}(A)/\D_{fd}(A)\ar[r]^{\overline{j^*}}\ar[d]&\D^-_{fd}(C)/\D_{fd}(C)\ar[d]
\\
\D^-_{fd}(B)/\per(B)\ar[r]^{\overline{i_*}}&\D^-_{fd}(A)/\per(A)\ar[r]^{\overline{j^*}}&\D^-_{fd}(C)/\per(C)
}$$}
in which all rows and columns are short exact sequences.
\end{cor}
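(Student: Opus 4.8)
The plan is to transport the recollement through the identifications of Proposition~\ref{prop:DG algebra} and then rebuild the localization square by hand. Write $\R=\D(B)$, $\T=\D(A)$, $\S=\D(C)$, so that the given recollement is of the form $(**)$; by Proposition~\ref{prop:DG algebra} (and its proof) these are locally Hom-finite noetherian approximable $k$-linear triangulated categories, with $\T^c=\per(A)$, $\T^b_c=\D_{fd}(A)$ and $\T^-_c=\D^-_{fd}(A)$, and similarly for $\R$ and $\S$. The essential point to keep track of is that homological smoothness forces the \emph{reverse} inclusion $\D_{fd}(A)\subseteq\per(A)$, i.e. $\T^b_c\subseteq\T^c$, so the three subcategories form a chain of thick subcategories $\D_{fd}(A)\subseteq\per(A)\subseteq\D^-_{fd}(A)$. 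In particular the hypothesis of Corollary~\ref{cor:for Tc}, hence of Corollary~\ref{cor:for Tbc}, that $\T$ carry a compact generator $G$ with $\Hom(G,G[n])=0$ for all $n$ below some bound genuinely fails here --- already $A=k[t]$ with $t$ in degree $-2$ and zero differential satisfies $(1)$--$(3)$ while $\Hom_{\D(A)}(A,A[-2n])=\mathsf{H}^{-2n}(A)\neq 0$ for every $n\geq 0$ --- so Corollary~\ref{cor:for Tbc} cannot be invoked verbatim and the square has to be reassembled with the roles of $\per$ and $\D_{fd}$ interchanged.

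The first assertion is Theorem~\ref{thm:exact seq Tbc}(1): the second row restricts to the short exact sequence up to direct summands $\R^b_c\xrightarrow{i_*}\T^b_c\xrightarrow{j^*}\S^b_c$, which is $\D_{fd}(B)\to\D_{fd}(A)\to\D_{fd}(C)$. For the commutative diagram, assume the recollement extends one step downwards. Then $i_*=i_!$ and $j^*=j^!$ preserve compact objects (the further right adjoints $i_{\#}$, $j^{\#}$ now exist, so $i^!$ and $j_*$ preserve coproducts), and Corollary~\ref{cor:reco to T-c} supplies genuine short exact sequences $\R^-_c\xrightarrow{i_*}\T^-_c\xrightarrow{j^*}\S^-_c$ and --- via the third half recollement there, which uses the ladder --- $\R^b_c\xrightarrow{i_*}\T^b_c\xrightarrow{j^*}\S^b_c$; moreover the second row is a Bousfield colocalization sequence ($i_*$ fully faithful, $j^*$ inducing $\T/i_*(\R)\simeq\S$) whose two functors preserve compacts and coproducts, so Neeman's theorem \cite{N92} (the general form behind Lemma~\ref{lem:reco to c}), applied to it, restricts it to a short exact sequence up to direct summands $\R^c\xrightarrow{i_*}\T^c\xrightarrow{j^*}\S^c$, that is $\per(B)\to\per(A)\to\per(C)$.

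The diagram is then assembled row by row. The middle row is exactly Theorem~\ref{thm:exact seq Tbc}(3). The top row is Theorem~\ref{thm:exact seq Tbc}(2) applied with $\U=\per(B)$, $\W=\per(A)$, $\V=\per(C)$, which is admissible because $\D_{fd}\subseteq\per\subseteq\D^-_{fd}$: here $i_*(\per(B))\cap\D_{fd}(A)=i_*(\D_{fd}(B))$ (Lemma~\ref{lem:intersection}(1) together with $\D_{fd}(B)\subseteq\per(B)$), every morphism $i_*(Y)\to Z$ with $Y\in\per(B)\subseteq\R^-_c$ and $Z\in\D_{fd}(A)\subseteq\T^+$ factors through $i_*(\D_{fd}(B))$ by Lemma~\ref{lem:intersection}(2), and Lemma~\ref{lem:third isomorphism} finishes --- giving the AGK row $\D_{\mathrm{agk}}(B)\to\D_{\mathrm{agk}}(A)\to\D_{\mathrm{agk}}(C)$, a short exact sequence up to direct summands (the ``up to direct summands'' being inherited from the $\per$-row, just as in the first assertion and in Neeman's theorem). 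The bottom row is obtained by rerunning the proof of Theorem~\ref{thm:exact sequences of quotient} with $(i_*,j^*)$ in the roles of $(j_!,i^*)$ and $\R^-_c,\T^-_c,\S^-_c$ in the roles of $\W$ and its siblings: the induced $\overline{i_*}\colon\R^-_c/\R^c\to\T^-_c/\T^c$ factors through the equivalence $\R^-_c/\R^c\xrightarrow{\ \sim\ }i_*(\R^-_c)/i_*(\R^c)$, which is fully faithful by Lemma~\ref{fully-faithful} once one checks $i_*(\R^-_c)\cap\T^c=i_*(\R^c)$ (using that $i_*$ preserves coproducts and compacts) and that every morphism $X\to i_*(Y)$ with $X\in\T^c$ and $Y\in\R^-_c$ factors through $i_*(\R^c)$; for the latter one takes the defining triangle $E\to Y\to F$ with $E\in\R^c$, $F\in\R^{\leq -m}$, applies $i_*$ (which is right $t$-exact for the glued preferred $t$-structures by Lemma~\ref{lem:gluing prefer t-struc} and carries $\R^c$ into $\T^c$), and uses $\Hom(X,\T^{\leq -n})=0$ for $X$ compact and $n\gg 0$ --- valid in any pre-approximable category --- to annihilate the component into $i_*(F)$; then Lemma~\ref{lem:third isomorphism} (with the genuine sequence $\R^-_c\to\T^-_c\to\S^-_c$ as its exact row) gives the bottom row $\D^-_{fd}(B)/\per(B)\to\D^-_{fd}(A)/\per(A)\to\D^-_{fd}(C)/\per(C)$. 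Finally the three columns are the third isomorphism for the chain $\D_{fd}\subseteq\per\subseteq\D^-_{fd}$ inside each of $\D(A),\D(B),\D(C)$, and they commute with the horizontal functors, so the whole diagram commutes with exact rows and columns.

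The main obstacle is exactly the structural reversal just described: because $\D_{fd}(A)$ lies \emph{inside} $\per(A)$, there is no half recollement available on the $\per$'s --- only Neeman's up-to-summands statement --- so the AGK row is only a short exact sequence up to direct summands, and the full-faithfulness inputs for all three rows must be checked directly rather than imported from Corollary~\ref{cor:for Tbc}. The genuinely new computation is the one for the bottom row, where the morphisms to be factored run \emph{from} a compact object of $\D(A)$ \emph{into} $i_*(\D^-_{fd}(B))$ --- the opposite direction to the ``$X\in i_*(\R^-_c)$, $Y\in\T^+$'' set-up of Lemma~\ref{lem:intersection}(2), and moreover $\per(A)$ (unlike $\D_{fd}(A)$) is cohomologically unbounded below in general, again witnessed by $A=k[t]$, $|t|=-2$ --- so that lemma is unavailable and the factoring must be produced afresh from the noetherian truncation of the source together with $\Hom(\text{compact},\T^{\leq -n})=0$, exactly the mechanism of the density step in the proof of Theorem~\ref{thm:exact seq Tbc}(1). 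Once that is in place the remaining assembly is routine bookkeeping with Lemma~\ref{lem:third isomorphism}.
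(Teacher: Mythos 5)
Your overall architecture is sound, and your criticism of the paper's one-line citation is apt: in this setting $\D_{fd}\subseteq\per$ rather than the reverse, a compact generator with $\Hom(G,G[\ll 0])=0$ would force properness (your $k[t]$, $|t|=-2$ witness is correct), and the three quotients in the asserted diagram are those of Corollary~\ref{cor:for Tbc} with the roles of $\T^c$ and $\T^b_c$ interchanged; so the diagram really must be reassembled from Theorem~\ref{thm:exact seq Tbc}(2)(3) and from Theorem~\ref{thm:exact sequences of quotient} applied to the lower recollement of the ladder, exactly as you do. Your middle row, bottom row and columns are fine (your hand-made factorization for the bottom row is just the proof of Theorem~\ref{thm:exact sequences of quotient} transported along the lower recollement, whose first row is $(i_*,j^*)$, so a direct citation would also have done).

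The genuine gap is in the top (AGK) row. You only obtain $\per(B)\xrightarrow{i_*}\per(A)\xrightarrow{j^*}\per(C)$ up to direct summands from Lemma~\ref{lem:reco to c}, and accordingly you claim the AGK row only ``up to direct summands'' --- but the corollary asserts that \emph{all} rows are short exact sequences, and your fallback is not even licensed by the tools you cite: Lemma~\ref{lem:third isomorphism} requires the ambient row to be genuinely exact, and Theorem~\ref{thm:exact seq Tbc}(2) requires the restricted sequence $\U\to\W\to\V$ to be genuinely exact, so with only an up-to-summands $\per$-row neither applies as stated. The missing observation is that under the ladder hypothesis the $\per$-row is genuinely exact: $i_*$ and $j^*$ preserve compacts (their right adjoints $i^!$ and $j_*$ preserve coproducts because $i_{\#}$ and $j^{\#}$ exist), Neeman's theorem applied to the lower recollement makes $\per(A)/i_*\per(B)\to\per(C)$ fully faithful, and it is essentially surjective --- no idempotent completion is needed --- because $j_!$ always preserves compacts and $j^*j_!\cong\mathrm{id}$, so every object of $\per(C)$ is isomorphic to the image under $j^*$ of a compact object of $\D(A)$. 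With that in hand, Theorem~\ref{thm:exact seq Tbc}(2) applies verbatim with $\U=\per(B)$, $\W=\per(A)$, $\V=\per(C)$ (your verification of its remaining hypotheses via Lemma~\ref{lem:intersection} is correct) and yields the genuine short exact AGK row. So the defect is fixable, but as written your argument proves a strictly weaker statement than the corollary and misapplies Lemma~\ref{lem:third isomorphism} in doing so.
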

\begin{proof}
From Proposition~\ref{prop:DG algebra} and its proof, $\D(B), \D(A)$ and $\D(C)$ are locally Hom-finite coherent approximable $k$-linear triangulated categories, $\D(?)^c=\per(?)$, $\D(?)^b_c=\D_{fd}(?)$ and $\D(?)^-_c=\D^-_{fd}(?)$ for $?\in\{A, B,C\}$. Then the statements follow directly from Theorem \ref{thm:exact seq Tbc}(1) and Corollary~\ref{cor:for Tbc}.
\end{proof}

In the corollary above, the short exact sequence in the first row of the commutative diagram is recently proved by Jin-Yang-Zhou \cite[Theorem 5.1]{JYZ23}\label{cor:smooth dg} independently by using the techniques from DG theory, here we provide a new proof and obtain more short exact sequences.

\subsection{Localization sequences in the derived categories of schemes}

In this subsection, we apply our results to derived categories of schemes.
This yields localization sequences for singularity categories,
generalizing a result of Chen \cite{Chen}.

Let $X$ be a quasicompact and quasi-separated scheme. We denote by $X\qcoh$ the quasicoherent sheaves on $X$ and $\D_{qc}(X)$ the unbounded derived category of cochain complexes of sheaves of $\mathcal{O}_X$-modules with quasicoherent cohomology. 
Let $U$ be a quasi-compact open subscheme of $X$ and write $Z=X-U$. Denote by $X\qcoh_Z$ the full subcategory of $X\qcoh$ of quasicoherent sheaves with support on $Z$. This is a Serre subcategory of $X\qcoh$ and we have a short exact sequence
$$0\lra {{X\qcoh\nolimits_Z}} {\lra } X\qcoh\lra U\qcoh\lra 0$$
of abelian categories (see \cite[Chapter III, Section 5]{g62}).
Denote by $\D_{qc,Z}(X)$  the full subcategory of  $\D_{qc}(X)$ which consists of all complexes whose cohomology is supported on $Z$.  
Due to \cite[Theorem 3.2]{N22a}, we know $\D_{qc,Z}(X)$ is weakly approximable. 
Furthermore, suppose that $X$ is noetherian. We denote by $X\coh$ the coherent sheaves on $X$ and by $X\coh_Z$ the full subcategory of $X\coh$ of coherent sheaves with support on $Z$. This is a Serre subcategory of $X\coh$ and we have a short exact sequence
$$0\lra {{X\coh\nolimits_Z}} {\lra } X\coh\lra U\coh\lra 0$$
of abelian categories (see \cite[Chapter II, Proposition 5.8 and Exercises 5.15]{ha77}).

Note that if $X$ is quasi-compact and separated, then by \cite{ajl97} the triangulated category
$\D_{qc,Z}(X)$ is equivalent to the unbounded derived category $\D(X\qcoh_Z)$ of quasi-coherent
$\mathcal{O}_X$-modules supported on $Z$.
For simplicity, we shall henceforth identify $\D_{qc,Z}(X)$ with $\D(X\qcoh_Z)$.
Under this identification, the bounded above subcategory $\D_{qc,Z}^-(X)$ coincides with the bounded above derived category $\D^-(X\qcoh_Z)$, and the bounded subcategory $\D_{qc,Z}^b(X)$ coincides with the bounded derived category $\D^b(X\qcoh_Z)$. If, in addition, $X$ is noetherian, then the bounded above category $\D_{coh,Z}^-(X)$ is equivalent
to the bounded above derived category $\D^-(X\coh_Z)$, and the bounded category
$\D_{coh,Z}^b(X)$ is equivalent to the bounded derived category $\D^b(X\coh_Z)$.

By employing the techniques developed in this paper, we provide a generalization of Chen’s result \cite[Theorem 1.3]{Chen} in statement (2) of the corollary below.

\begin{cor}\label{cor:scheme}
Let $X$ be a quasi-compact and separated scheme. Assume that $Z\subseteq X$ is a closed subset with quasi-compact complement $U$. 
Then the following statements hold. 
\begin{enumerate}
\item There is a short exact sequence of big singularity categories
 $$\D_{sg,Z}^{\rm big}(X) \longrightarrow \D_{sg}^{\rm big}(X) \longrightarrow \D_{sg}^{\rm big}(U).$$

\item
If, in addition, $X$ is noetherian, then there is a short exact sequence of singularity categories
$$\D_{sg,Z}(X)\longrightarrow \D_{sg}(X)\longrightarrow \D_{sg}(U).$$

\end{enumerate}
\end{cor}
\begin{proof}
According to \cite{J}, there exists a recollement
$$\xymatrix{
\D_{qc}(U)\ar^-{i_*=i_!}[r]
&\D_{qc}(X)\ar^-{j^!=j^*}[r]\ar^-{i^!}@/^1.2pc/[l]\ar_-{i^*}@/_1.6pc/[l]
&\D_{qc,Z}(X),\ar^-{j_*}@/^1.2pc/[l]\ar_-{j_!}@/_1.6pc/[l]}$$
that is, we have a recollement
$$\xymatrix{
\D(U\qcoh)\ar^-{i_*=i_!}[r]
&\D(X\qcoh)\ar^-{j^!=j^*}[r]\ar^-{i^!}@/^1.2pc/[l]\ar_-{i^*}@/_1.6pc/[l]
&\D({X\qcoh}_{Z}).\ar^-{j_*}@/^1.2pc/[l]\ar_-{j_!}@/_1.6pc/[l]}$$

(1) Note that there is an exact sequence of abelian categories
$$0\lra {X\qcoh}_Z\lra X\qcoh\lra U\qcoh\lra 0.$$
By \cite[Theorem 3.2]{m91}, the first row in the above recollement is restricted to a short exact sequence  
$$\D^b({X\qcoh}_{Z})\stackrel{j_!}{\longrightarrow} \D^b(X\qcoh)\stackrel{i^*}{\longrightarrow}  \D^b(U\qcoh).$$
By Theorem \ref{thm-l-sp}(1), the first row in the recollement is restricted to a short exact sequence up to direct summands
$$\xymatrix{
{\D_{qc,Z}(X)}^{{\rm sb}} \ar[r]^{j_!} 
&{\D_{qc}(X)}^{{\rm sb}} \ar[r]^{i^*}  
&{\D_{qc}(U)}^{{\rm sb}} .
}$$
Note that $\D^b(X\qcoh)\subseteq \D^-(X\qcoh)$.
The desired short exact sequence then follows from  Theorem \ref{thm-l-sp}(2).

(2) By \cite[Theorem 3.2]{m91}, the first row in the above recollement is restricted to a short exact sequence  
$$\D^b({X\coh}_{Z})\stackrel{j_!}{\longrightarrow} \D^b(X\coh)\stackrel{i^*}{\longrightarrow}  \D^b(U\coh).$$

Note that $\D^b(X\coh)\subseteq \D^-(X\coh)$. So, by Theorem \ref{thm:exact sequences of quotient}, we obtain the exact sequence.
\end{proof}

{\bf Acknowledgments:} The authors would like to thank Professor Bin Zhu for valuable discussions.
This work was supported by the National Natural Science Foundation of China (Grants 12501052, 12401038 and 12401044). The third author was also supported by the Hubei University Original Exploration Seed Fund (Grant No.260701747001).


\begin{thebibliography}{99}
\bibitem{ajl97} L. Alonso Tarr\'{i}o,  A. Jerem\'{i}as L\'{o}pez, J. Lipman, \emph{Local homology and cohomology on schemes}, Ann. Sci. \'{E}cole Norm. Sup. (4) \textbf{30}(1) (1997), 1-39.

\bibitem{ALS03}
L. Alonso Tarr\'{i}o,  A. Jerem\'{i}as L\'{o}pez, M. J. Souto Salorio,
\newblock {\em Construction of $t$-structures and equivalences of derived categories,}
\newblock Trans. Amer. Math. Soc. {\bf 355}(6) (2003), 2523-2543.

\bibitem{A09}
C. Amiot,
\newblock {\em Cluster categories for algebras of global dimension 2 and quivers with potential,}
\newblock Ann. Inst. Fourier {\bf 59}(6) (2009), 2525-2590.

\bibitem{AKLY17}
L. Angeleri H\"ugel, S. Koenig, Q. H. Liu, D. Yang,
\newblock {\em Ladders and simplicity of derived module categories,}
\newblock J. Algebra {\bf 472} (2017), 15–66.

\bibitem{BBD}
A. A. Beilinson, J. Bernstein, P. Deligne,
\newblock {\em Faisceaux perverse (French), Analysis and topology on singular spaces,}
\newblock Asterisque, vol. {\bf 100} (1982), Soc. Math. France, Paris, 5-171.

\bibitem{Bel2000}
A. Beligiannis,
{\em The homological theory of contravariantly finite subcategories: Auslander--Buchweitz contexts, Gorenstein categories and (co-)stabilization}, Comm. Algebra \textbf{28} (10) (2000), 4547-4596.

\bibitem{BR07}
A. Beligiannis and I. Reiten, \textit{Homological and homotopical aspects of torsion theories}, Mem.
Amer. Math. Soc. \textbf{188} (2007), 1-207.

\bibitem{BCPRZ24}
R. Biswas, H. X. Chen, C. Parker, K. M. Rahul, J. H. Zheng,
\newblock {\em Bounded $t$-structures, finitistic dimensions, and singularity categories of triangulated categories,}
\newblock preprint (2024), arXiv:2401.00130.

\bibitem{BV20}
M. V. Bondarko, S. V. Vostokov,
\newblock {\em On Weakly Negative Subcategories, Weight Structures, and (Weakly) Approximable Triangulated Categories,}
\newblock Lobachevskii J. Math. {\bf 41} (2020), 151–159.

\bibitem{Buch87}
R.-O. Buchsweitz, \emph{Maximal Cohen-Macaulay modules and Tate-cohomology over Gorenstein rings}, Manuscript, University of Hannover, 1987.

\bibitem{BNP18}
J. Burke, A. Neeman, B. Pauwels,
\newblock {\em Gluing approximable triangulated categories,}
\newblock Forum of Math., Sigma {\bf 11} (2023), 1–18.


\bibitem{CHNS}
A. Canonako, C. Haesemeyer, A. Neeman, P. Stellari,
\newblock{\em The passage among the subcategories of weakly approximable triangulated categories,}
\newblock preprint (2024), arXiv:2402.04605.

\bibitem{ccz25}
H. X. Chen, X. H. Chen and J. B. Zhang,
\newblock {\em Finiteness of homological dimensions in triangulated categories,} manuscript in preparation, 2026.

\bibitem{Chen}
X. W. Chen,
\newblock {\em Unifying two results of Orlov on singularity categories,}
\newblock Abh. Math. Semin. Univ. Hambg. {\bf 80} (2010), 207-212.

\bibitem{g62}
P. Gabriel, \emph{Des cat\'egories ab\'eliennes}, Bull. Soc. Math. France \textbf{90} (1962), 323-448.

\bibitem{G10}
L. Guo,
\newblock {\em Cluster tilting objects in generalized higher cluster categories,}
\newblock J. Pure Appl. Alg. {\bf 215}(9) (2011), 2055-2071.

\bibitem{ha77}
R. Hartshorne, \emph{Algebraic Geometry}, Grad. Texts in Math., vol. 52, Springer-Verlag, New York, 1977.

\bibitem{JYZ23}
H. B. Jin, D. Yang, G. D. Zhou,
\newblock {\em A localisation theorem for singularity categories of proper DG algebras,}
\newblock preprint (2023), arXiv:2302.05054.

\bibitem{J}
P. Jorgensen,
\newblock {\em A new recollement for schemes,}
\newblock Houston J. Math. {\bf 35} (2009), 1071-1077.

\bibitem{KY16}
M. Kalck, D. Yang,
\newblock{\em Relative singularity categories I: Auslander resolutions,}
\newblock Adv. Math. {\bf 301} (2016), 973–1021.


\bibitem{K96}
B. Keller,
\newblock {\em Derived categories and their uses,}
\newblock in: Handbook of Algebra, vol. 1, in: Handb. Algebr., vol. 1, Elsevier/North-Holland, Amsterdam, 1996, pp. 671–701.

\bibitem{K10}
H. Krause,
\newblock {\em Localization theory for triangulated categories,}
\newblock In T. Holm, P. J\o rgensen, R. Rouquier (Eds.), Triangulated Categories (London Mathematical Society Lecture Note Series, pp. 161-235). Cambridge: Cambridge University Press. (2010).

\bibitem{m91}
J. Miyachi,
\newblock \emph{Localization of triangulated categories and derived categories},
\newblock J. Algebra \textbf{141} (1991), 463-483.

\bibitem{N92}
A. Neeman,
\newblock {\em The connection between the K–theory localisation theorem of Thomason, Trobaugh and Yao,
and the smashing subcategories of Bousfield and Ravenel,}
\newblock Ann. Sci. \'{E}c. Norm. Sup\'{e}r. {\bf 25} (1992), 547-566.

\bibitem{N18a}
A. Neeman,
\newblock {\em Triangulated categories with a single compact generator and a Brown representability theorem,}
\newblock preprint (2018), arXiv:1804.02240.

\bibitem{N18c}
    A. Neeman,
    \newblock {\em The categories $\mathcal{T}^c$ and $\mathcal{T}_c^b$ determine each other,}
\newblock preprint (2018), arXiv:1806.06471.

\bibitem{N18d}
A. Neeman,
\newblock {\em The $t$-structures generated by objects,}
\newblock Trans. Amer. Math. Soc. {\bf 374} (2021), 8161-8175.

\bibitem{N21a}
A. Neeman,
\newblock {\em Strong generators in $\D^{\mathsf{perf}}(X)$ and $\D^{b}_{\mathsf{coh}}(X)$,}
\newblock Ann. Math. {\bf 193} (2021), 689-732.

\bibitem{N21b}
A. Neeman,
\newblock {\em Approximable triangulated categories,}
\newblock  Cont. Math. {\bf 769} (2021), 111-155.

\bibitem{N22a}
A. Neeman,
\newblock {\em Bounded $t$-structures on the category of perfect complexes,}
\newblock Acta Math. \textbf{233}(2) (2024), 239-284.

\bibitem{N01}
A. Neeman,
\newblock Triangulated categories, volume 148 of Annals of Mathematics Studies.
\newblock{\em Princeton University Press, Princeton, NJ,} 2001

\bibitem{O11}
D. O. Orlov,
\newblock{\em Formal completions and idempotent completions of triangulated categories of singularities,}
\newblock Adv. Math. {\bf 226}(1) (2011), 206-217.

\bibitem{R88}
J. Rickard,
\newblock {\em Morita theory of derived categories,}
\newblock J. Lond. Math. Soc. {\bf 39}(2) (1988), 436-456.

\bibitem{TT90}
R.~W. Thomason and T. Trobaugh,
\textit{Higher algebraic $K$-theory of schemes and of derived categories}.
\newblock In The Grothendieck Festschrift, Vol.~III, 247-435, Progr. Math. 88, Birkh\"auser, Boston, 1990.

\end{thebibliography}
\end{document}